\documentclass[11pt,a4paper]{article}

\usepackage[top=2.5cm, bottom=2.5cm, left=3cm, right=2.5cm]{geometry}
\usepackage{graphicx}%
\usepackage{multirow}%
\usepackage{amsmath,amssymb,amsfonts}%
\usepackage{amsthm}%
\usepackage{mathrsfs}%
\usepackage[title]{appendix}%
\usepackage{xcolor}%
\usepackage{textcomp}%
\usepackage{manyfoot}%
\usepackage{booktabs}%
\usepackage{algorithm}%
\usepackage{algorithmicx}%
\usepackage{algpseudocode}%
\usepackage{listings}%

\newtheorem{theorem}{Theorem}
\newtheorem{proposition}[theorem]{Proposition}%
\newtheorem{remark}{Remark}%
\newtheorem{lemma}{Lemma}%
\newtheorem{corollary}{Corollary}%

\newcommand{\tp}{^\top}
\newcommand{\bb}[1]{\mathbb{#1}}
\newcommand{\ssp}[1]{{\left( #1 \right)}}
\newcommand{\mmp}[1]{{\left[ #1 \right]}}

\newcommand{\nnp}[1]{{\left\| #1 \right\|}}
\newcommand{\inner}[1]{{\left\langle #1 \right\rangle}}
\newcommand{\tr}{\mathrm{tr}}
\newcommand{\Diag}{\mathrm{Diag}}
\usepackage{subcaption}
\usepackage{hyperref}

\title{Exact Quadratic Penalty Function for Symplectic Eigenvalue Problem}

\author{Jiaqi Wang\thanks{State Key Laboratory of Scientific and Engineering Computing, Academy of Mathematics and Systems Science, Chinese Academy of Sciences, and University of Chinese Academy of Sciences, China \url{wangjiaqi@lsec.cc.ac.cn}} \and Nachuan Xiao\thanks{School of Data Science, The Chinese University of Hong Kong, Shenzhen, Guangdong, China \url{xncxy@cuhk.edu.cn}} \and Xin Liu\thanks{State Key Laboratory of Scientific and Engineering Computing, Academy of Mathematics and Systems Science, Chinese Academy of Sciences, and University of Chinese Academy of Sciences, China \url{liuxin@lsec.cc.ac.cn}}}

\date{\today}

\begin{document}

\maketitle

\begin{abstract}
The symplectic eigenvalue problem for symmetric positive-definite (SPD) matrices plays a crucial role in various scientific fields, including quantum mechanics and control theory. While this problem can be formulated as an optimization problem over the symplectic Stiefel manifold, existing Riemannian optimization frameworks admit high computational costs due to expensive geometric complications. Furthermore, the non-compactness of the manifold imposes challenges in theoretical analysis, as it fails to guarantee the uniform non-singularity of its associated Jacobian. In this paper, we transform the symplectic eigenvalue problem into an unconstrained minimization of the trace-penalty function. This reformulation enables the direct application of various unconstrained optimization methods. We establish the equivalence between the symplectic eigenvalue problem and the proposed trace-penalty function under mild assumptions. Numerical experiments demonstrate that the proposed algorithm significantly outperforms a wide range of existing approaches, including Riemannian optimization methods, in terms of computational efficiency. These results highlight the substantial potential of our algorithm, particularly for large-scale symplectic eigenvalue problems.
\end{abstract}

\section{Introduction}
Given a positive integer $k$, we consider the matrix $J_k \in \bb{R}^{2k \times 2k}$ as 
\begin{equation*}
J_k = \begin{bmatrix}
0 & I_k \\
-I_k & 0
\end{bmatrix},
\end{equation*}
where $I_k$ denotes the $k$-th order identity matrix. The matrix $J_k$ is also known as a Poisson matrix \cite{Peng2016Symplectic}. A matrix $S \in \mathbb{R}^{2n \times 2p}$ is called a symplectic matrix \cite{Peng2016Symplectic, Gao2021Riemannian} if 
\begin{equation*}
S^\top J_n S = J_p.
\end{equation*}
Although the term \textit{symplectic} has historically been restricted to square matrices, its definition has recently been extended to rectangular matrices \cite{Peng2016Symplectic, Gao2021Riemannian}. 

In \cite{Williamson1936Algebraic}, Williamson proved that for any symmetric positive definite (SPD) matrix $A \in \mathbb{R}^{2n \times 2n}$, there exists a symplectic matrix $S \in \mathbb{R}^{2n \times 2n}$ that diagonalizes $A$ in the sense that 
\begin{equation}\label{eq:williamson}
S^\top A S = \begin{bmatrix}
D & 0 \\
0 & D
\end{bmatrix},
\end{equation}
where $D = \mathrm{Diag}\ssp{d_1, \dots, d_n}$ with $d_j > 0$ for $j = 1, \dots, n$. The right-hand side of \eqref{eq:williamson} is called Williamson's diagonal form, and $d_1, \dots, d_n$ are referred to as the symplectic eigenvalues. Moreover, the columns of $S$ are called the symplectic eigenvectors, in the sense that 
\begin{equation}
    A [S_i, S_{i+n}] = d_i J_n [S_i, S_{i+n}] J_1\tp, \quad \forall~ 1\leq i \leq n
\end{equation}
Here $S_i$ refers to the $i$-th column vector of $S$. 

Symplectic eigenvalues have extensive applications in diverse fields, including optics \cite{Eisert2008Gaussian}, quantum mechanics \cite{Krbek2014Inequalities}, quantum computing \cite{Banchi2015Quantum}, quantum information and communication \cite{Adesso2007Entanglement, Weedbrook2012Gaussian, Chakraborty2025Hybrid}, learning of linear port-Hamiltonian systems via their normal forms \cite{Ortega2024Learnability}, and stability analysis of weakly damped gyroscopic systems \cite{Benner2005Skew}. For example, in quantum mechanics, they play a crucial role in characterizing quantum systems and their subsystems with Gaussian states \cite{Hiroshima2006Additivity, Parthasarathy2013Symmetry, Krbek2014Inequalities}. Within the Gaussian marginal problem, knowledge of the symplectic eigenvalues facilitates the determination of local entropies compatible with a given joint state \cite{Eisert2008Gaussian}.

The computation of standard eigenvalues constitutes a well-established subfield in numerical linear algebra (see \cite{Kressner2005Numerical, Watkins2007Matrix, Saad2011Numerical}). In particular, optimization-based numerical methods have been extensively investigated. These methods involve minimizing either the matrix trace or the Rayleigh quotient under specific constraints. However, relatively few studies have specifically focused on computing symplectic eigenvalues. Some constructive proofs, as shown in \cite{Simon1999Congruences, Parthasarathy2013Symmetry}, result in numerical methods that are only appropriate for problems of small to medium size.

It is important to note that \eqref{eq:williamson} constitutes a special case of the results presented in \cite{Williamson1936Algebraic}, where the authors aimed to determine the canonical forms of the matrix pencil $A - \lambda B$ with $A$ symmetric and $B$ skew-symmetric. Subsequently, several works, including the review \cite{Lancaster2005Canonical} and the more recent study \cite{Bovdi2018Symplectic}, have revisited this topic. These works indicate that the problem of computing the symplectic eigenvalues of a $2n \times 2n$ SPD matrix $A$ can be reformulated as the (standard) eigenvalue problem of the skew-symmetric matrix $A^{1/2} J_n A^{1/2}$ \cite{Jain2022Derivatives}, Hermitian matrix pencil $A - i\lambda J_n$ \cite{Lancaster2006Canonical}, or positive-definite Hamiltonian (pdH) matrix $J_n A$ \cite{Amodio2003Symplectic}. Here, $A^{1/2}$ denotes the symmetric positive-definite square root of $A$. Consequently, properties such as the linear independence of symplectic eigenvectors corresponding to distinct symplectic eigenvalues, the uniqueness of such decompositions, and even certain constructive proofs of Williamson's theorem \cite{Simon1999Congruences, Parthasarathy2013Symmetry, Ikramov2018Symplectic} can be verified using these facts. However, none of these cases yield a symmetric matrix for the (standard) eigenvalue problem, thus failing to provide a simplified problem.

The approaches in \cite{Amodio2006Computation, Ikramov2018Symplectic} establish a one-to-one correspondence between SPD and positive-definite Hermitian matrices. In particular, \cite{Ikramov2018Symplectic} demonstrates that the symplectic eigenvalues of $A$ can be determined by transforming the pdH matrix $J_n A$ into a normal form via elementary symplectic transformations, as outlined in \cite{Ikramov1991Conditions}. This relationship is also utilized in the symplectic Lanczos method to find several extreme eigenvalues of the pdH matrices, as presented in \cite{Amodio2003Symplectic, Amodio2006Computation}. Additionally, in \cite{Gruning2011Implementation, Penke2020High}, Lanczos-type algorithms have been developed to compute the eigenvalues of the skew-symmetric matrix $A^{1/2}J_nA^{1/2}$, a task linked to determining the symplectic eigenvalues of $A$ as noted earlier.

\subsection{Solving symplectic eigenvalue problem through constrained optimization}
To efficiently solve the symplectic eigenvalue problem, \cite{Son2021Computing} considers the following constrained optimization problem, 
\begin{equation}
\label{Prob_Ori}
\tag{COP}
\begin{aligned}
\min_{X \in \mathbb{R}^{2n \times 2p}} \quad &\frac{1}{2} \langle X, AX \rangle \\
\text{s. t.} \quad & X^\top J_n X = J_p.
\end{aligned}
\end{equation}
Here, the inner product $\langle X, Y \rangle$ is defined as $\mathrm{tr}\left(X^\top Y\right)$. The feasible set of \eqref{Prob_Ori} is a Riemannian manifold, denoted by
\begin{equation*}
\mathrm{Sp}(2p, 2n) := \left\{ X \in \mathbb{R}^{2n \times 2p} \mid X^\top J_n X = J_p \right\},
\end{equation*}
commonly referred to as the \textit{symplectic Stiefel manifold} when $p < n$. The Riemannian manifold $\mathrm{Sp}(2p, 2n)$ degenerates into a symplectic group when $p = n$ and is denoted by $\mathrm{Sp}(2n)$ for simplicity.

The optimization problem \eqref{Prob_Ori} is closely related to the symplectic eigenvalue problem for a symmetric positive-definite (SPD) matrix $A \in \mathbb{R}^{2n \times 2n}$. Using the trace minimization theorem, which is discussed in detail in \cite{Hiroshima2006Additivity, Bhatia2015Symplectic}, the smallest $k$ ($1 \leq k \leq n$) symplectic eigenvalues of a given SPD matrix $A$ can be reduced to minimize the trace cost function under the symplecticity constraint, that is, the optimization problem \eqref{Prob_Ori}. The trace minimization theorem states that the global minimum value of \eqref{Prob_Ori} is equal to the sum of the smallest $k$ symplectic eigenvalues of $A$. Once the global minimizer $X_{\mathrm{min}}$ is obtained, let symplectic matrix $X_{\mathrm{diag}}$ diagonalize $X_{\mathrm{min}}^\top A X_{\mathrm{min}}$. The resulting diagonal elements are the corresponding symplectic eigenvalues, and $X_{\mathrm{min}} X_{\mathrm{diag}}$ are the associated symplectic eigenvectors. In other words, the solution to the trace minimization problem provides only a symplectic basis for the subspace corresponding to the smallest $k$ symplectic eigenvalues.

It is worth noting that the symplectic Stiefel manifold $\text{Sp}(2p,2n)$ is a Riemannian manifold, hence existing first-order algorithms for Riemannian optimization \cite{Absil2009Optimization} can be applied once the mechanisms for determining the search directions and retraction are defined. From this perspective, for general optimization problems with symplectic constraints, a steepest descent method has been proposed in \cite{Gao2021Riemannian}, incorporating two types of retractions on $\text{Sp}(2p,2n)$: the quasi-geodesic retraction and the Cayley retraction. Subsequently, \cite{Oviedo2023Collection, Oviedo2021Efficient} introduced a more efficient retraction, exhibiting numerical and empirical superiority over the Cayley retraction in \cite{Gao2021Riemannian}. For the case $n = p$, Newton’s method was proposed in \cite{Birtea2020Optimization} as an optimization method for symplectic groups; however, it cannot be applied straightforwardly when $n > p$. For problem \eqref{Prob_Ori}, \cite{Son2021Computing} constructed an algorithm by leveraging the Riemannian structure of the feasible set investigated in \cite{Gao2021Riemannian}. As noted earlier, solving \eqref{Prob_Ori} yields only a symplectic basis for the eigensubspace rather than eigenvectors. Recently, \cite{Son2025Brockett} introduces a Brockett cost function, revealing connections to symplectic eigenvalues/eigenvectors, and specifically demonstrates that any critical point comprises symplectic eigenvectors. 

Although \eqref{Prob_Ori} is equivalent to the symplectic eigenvalue problem, developing efficient optimization algorithms for \eqref{Prob_Ori} remains a challenging task. The nonconvexity of the constraints of \eqref{Prob_Ori} prevents the direct implementations of various efficient solvers for unconstrained optimization. Moreover, although Riemannian optimization methods are developed based on their unconstrained counterparts, these methods require the computation of the related geometric materials, including the Riemannian gradients, retractions, vector transports, etc. However, computing these geometric materials requires the solution of Lyapunov equations, hence is computationally expensive in practice, especially in large-scale optimization problems. 


\subsection{Contributions}
To develop efficient optimization methods for solving \eqref{Prob_Ori}, we consider to reformulate \eqref{Prob_Ori} into an unconstrained optimization problem. It is worth noting that for the standard eigenvalue problems,  \cite{Wen2016TracePenalty} proposed an unconstrained trace-penalty minimization model based on a quadratic penalty function. Inspired by this work, we adapt the quadratic penalty functions to the symplectic eigenvalue problem and consider the following optimization problem,
\begin{equation}
\tag{TPM}
\label{Prob_Pen}
\min_{X \in \mathbb{R}^{2n \times 2p}} \quad f_\beta(X) := \frac{1}{2} \langle X, AX \rangle + \frac{\beta}{4} \nnp{X\tp J_n X - J_p}_F^2,
\end{equation}
where $\beta > 0$ is a penalty parameter. Notably, quadratic penalty functions are exact for standard eigenvalue problems \cite{Wen2016TracePenalty}, as the compactness of the constraint $X\tp X = I_p$ restricts the equivalence analysis to a uniformly bounded set and the Jacobian of the mapping $X \mapsto X\tp X - I_p$ is uniformly non-singular (i.e., the smallest singular value is uniformly bounded away from $0$) over that uniformly bounded set. Conversely, the non-compactness of the symplectic constraint $X\tp J_n X = J_p$ in \eqref{Prob_Ori} requires us to establish the equivalence analysis over an unbounded set, where the Jacobian of $X\mapsto X\tp J_n X - J_p$ is possibly not uniformly non-singular. This difference introduces significant theoretical challenges in establishing the relationship between \eqref{Prob_Ori} and \eqref{Prob_Pen}.

In this paper, we establish the equivalence between the penalty problem \eqref{Prob_Pen} and the original problem \eqref{Prob_Ori} in terms of eigensubspaces. Specifically, we prove that the global minimizers of the penalty problem preserve the subspace of the global minimizers of the original problem. Furthermore, we derive an explicit expression for all first-order stationary points of the penalty problem and demonstrate that no local minimizer exists for the penalty problem. It is worth mentioning that evaluating the gradient of \eqref{Prob_Pen} only involves  matrix-matrix multiplications, hence avoiding the computationally expensive geometric computations of the symplectic Stiefel manifold.



Furthermore, we develop a first-order method for solving the penalty problem \eqref{Prob_Pen}. The proposed method is a non-monotonic gradient method that incorporates the Barzilai-Borwein (BB) step size, which is designed to solve this unconstrained problem.

Preliminary numerical experiments demonstrate the effectiveness of the proposed algorithm, in the aspects of accuracy of the yielded subspaces, residual of the optimal solution, and wall-clock time, in comparison with existing algorithms. These numerical results also demonstrate that we can develop efficient algorithms based on our proposed trace-penalty function and the techniques from unconstrained optimization.

\subsection{Organization}
The remainder of this paper is organized as follows. We analyze the trace-penalty minimization model in \ref{sec:model}. Our algorithms and several implementation details are discussed in \ref{sec:alg}. The numerical results are reported in \ref{sec:numexp}. Finally, we conclude the paper in \ref{sec:con}.

\section{Exactness of the Trace Penalty Function}
\label{sec:model}
In this section, we first prove the equivalence between \eqref{Prob_Ori} and \eqref{Prob_Pen} by demonstrating that the global minimizers of these two problems span the same eigensubspace. Second, the penalty problem \eqref{Prob_Pen} is proven to have no local minimizer. Finally, the condition numbers of the Hessians at the global minimizers are estimated.

\subsection{Preliminaries and Notations}

In this paper, let $0_{p\times q}$ denote the all-zero matrix with size $p\times q$. For square matrix $A,B$, let $A\oplus B := \begin{bmatrix}A & \\ & B\end{bmatrix}$. $\|\cdot\|_2$ and $\|\cdot\|_F$ represent the $L_2$-norm and the Frobenius norm of a matrix, respectively. For a positive semi-definite matrix, $A^{1/2}$ refers to the square root of $A$, namely the unique positive semi-definite matrix satisfying $A^{1/2}\cdot A^{1/2} = A$. Let $\mathrm{Skew}(2k)$ denote the $2k\times 2k$ skew-symmetric matrix set. The $k\times k$ orthonormal matrix set is abbreviated by $\mathrm{Orth}(k)$, and $\mathrm{OrSp}(2k)$ refers to the set of $2k\times 2k$ matrices that are not only symplectic but also orthonormal.

Additionally, a matrix $W$ is called \textit{skew-Hamiltonian} if $WJ_n=-\ssp{WJ_n}\tp$ \cite{Benner2005Skew}. We present the following theorem that illustrating the properties of skew-Hamiltonian matrices that is essential in our theoretical analysis. 
\begin{theorem}[\cite{VanLoan1984}]
\label{thm:PVL}
Given a skew-Hamiltonian matrix $W\in\bb{R}^{2n\times 2n}$, there is always an orthogonal symplectic matrix $U$ so that $U\tp WU$ has Paige/Van Loan (PVL) form, i.e.
\begin{equation*}
U\tp WU =
\begin{bmatrix}
W_{11} & W_{12}\\
& W_{11}\tp
\end{bmatrix}
\end{equation*}
where $W_{11}\in\bb{R}^{n\times n}$ is an upper Hessenberg matrix with diagonal blocks of order one and two corresponding, respectively, to real and complex standard eigenvalues of $W$.
\end{theorem}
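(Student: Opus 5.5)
The plan is to prove the PVL form for a skew-Hamiltonian $W$ by a structure-preserving Hessenberg-type reduction. It uses two facts. First, skew-Hamiltonicity is preserved under orthogonal symplectic similarity. Second, a real skew-Hamiltonian matrix has the block form
\[
W=\begin{bmatrix} A & G\\ Q & A\tp\end{bmatrix},\qquad G=-G\tp,\ Q=-Q\tp .
\]
This follows from $WJ_n$ being skew-symmetric. The invariance also follows directly. If $U\tp J_n U=J_n$ and $U\tp U=I$, then $U J_n = J_n U$. Hence
\[
U\tp W U J_n = U\tp W J_n U
\]
is skew-symmetric whenever $WJ_n$ is. So it suffices to find an orthogonal symplectic $U$ that annihilates the $(2,1)$ block $Q$ and makes $A$ upper Hessenberg.

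Once $Q=0$, the $(2,2)$ block is automatically $A\tp$. Skew-symmetry of $Q$ also forces a zero diagonal, so only its strictly lower part needs to be eliminated.

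The reduction proceeds column by column, $j=1,\dots,n-1$, using the standard orthogonal symplectic building blocks:
\begin{itemize}
\item symplectic Householder matrices $H(v)\oplus H(v)$ acting on rows/columns $j+1{:}n$ and $n+j+1{:}2n$;
\item symplectic Givens rotations mixing coordinates $k$ and $n+k$.
\end{itemize}
At step $j$, first apply $H\oplus H$ to map the subvector $Q(j+1{:}n,j)$ to a multiple of $e_{j+1}$. Then use a Givens rotation in the plane $(j+1,n+j+1)$ to zero that remaining entry against $A(j+1,j)$. Finally apply another $H\oplus H$ to zero $A(j+2{:}n,j)$.

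Each transformation is an orthogonal symplectic similarity. It acts only on indices beyond $j$, so earlier zeros are preserved.

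The skew-symmetry of $Q$ makes the zeros in column $j$ of $Q$ propagate to row $j$ of $Q$. Consequently, after all $n-1$ steps $Q=0$ and $A$ is upper Hessenberg. Composing the factors gives $U$ with $U\tp WU=\begin{bmatrix} W_{11}&W_{12}\\ 0&W_{11}\tp\end{bmatrix}$.

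For the finer claim, that $W_{11}$ has $1\times1$ and $2\times2$ diagonal blocks matching the real and complex eigenvalues, apply the real Schur decomposition $W_{11}=V T V\tp$ and conjugate by $V\oplus V$. This matrix is orthogonal symplectic, and it keeps the $(2,1)$ block zero. The resulting $(1,1)$ block is quasi-triangular, which is in particular block upper Hessenberg. Since the spectrum of $W$ equals that of $W_{11}$ (each eigenvalue appearing twice), the blocks correspond to the real and complex eigenvalues of $W$.

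The main obstacle is the bookkeeping in the elimination step. One must check that the Givens rotation in the $(j+1,n+j+1)$ plane does not destroy zeros created earlier in $A$ or $Q$, and that the $(1,2)$ block's skew-symmetry does not obstruct the elimination. I would handle this by tracking the invariant, proved by induction on $j$, that after step $j$ the first $j$ columns of $Q$ vanish and the first $j$ columns of $A$ are Hessenberg. Skew-symmetry is what transfers the column zeros of $Q$ to its rows.
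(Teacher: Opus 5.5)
Your proposal is correct and is essentially the standard constructive argument behind the cited result \cite{VanLoan1984}; the paper itself gives no proof. That argument is a structure-preserving reduction by $H\oplus H$ Householder reflectors and symplectic Givens rotations in the $(j+1,n+j+1)$ planes, in which the skew-symmetry of the $(2,1)$ block forces it to vanish and the $(2,2)$ block is automatically $W_{11}\tp$. Your passage to $1\times1$ and $2\times2$ diagonal blocks, via a real Schur step conjugated by $V\oplus V$ and the spectrum of $W$ being that of $W_{11}$ with doubled multiplicities, is likewise the standard route to the skew-Hamiltonian Schur form.
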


In this paper, we adopt the definitions from \cite{nocedal2006numerical} on the optimality conditions for the constrained problem \eqref{Prob_Ori}. In particular, we say $X \in \bb{R}^{2p\times 2p}$ is a first-order  stationary point of \eqref{Prob_Ori}, if there exists $L\in \bb{R}^{2p\times 2p}$ such that
\begin{equation*}
\left\{
\begin{aligned}
& AX = J_n X L,\\
& X\tp J_n X =J_p.
\end{aligned}
\right.
\end{equation*}

Moreover, for the unconstrained optimization problem \eqref{Prob_Pen}, we say $X \in \bb{R}^{2p\times 2p}$ is a first-order stationary point of \eqref{Prob_Pen}, if $\nabla h(X) = 0$. 
Moreover, we say $X \in \bb{R}^{2p\times 2p}$ is a second-order stationary point of \eqref{Prob_Pen}, if $X$ is a first-order stationary point of \eqref{Prob_Pen}, and $\nabla^2 h(X) \succeq 0$.

\subsection{Equivalence Between Stationary Points}
We define equivalence between the constrained and unconstrained problems as follows: \eqref{Prob_Pen} is \textit{equivalent} to \eqref{Prob_Ori} if their global minimizers span identical $2p$-dimensional eigenspaces corresponding to the $p$ smallest symplectic eigenvalues of $A$. Our main theoretical result shows that this equivalence holds when the penalty parameter satisfies $\beta > d_p$, where $d_p$ denotes the largest target symplectic eigenvalue. This condition ensures $\beta$ need not be excessively large, thereby avoiding numerical ill-conditioning.

Let Williamson's diagonal form of $A$ be uniquely expressed as:
\begin{equation*}
D := \mathrm{Diag}\left(d_1, d_2, \dots, d_n, d_1, d_2, \dots, d_n\right)
\end{equation*}
with $0 < d_1 \leq d_2 \leq \dots \leq d_n$, and let $S\in\mathrm{Sp}(2n)$ be one of the symplectic matrices satisfying $S^\top A S = D$.

\begin{proposition}
\label{prop:SPD-skew-Ham}
For any $\beta > 0$, if $X \in \mathbb{R}^{2n \times 2p}$ is a first-order stationary point of \eqref{Prob_Pen}, then the matrix $-X^\top J_n X J_p$ is symmetric positive semi-definite and skew-Hamiltonian. Furthermore, its spectral norm satisfies $\| -X^\top J_n X J_p \|_2 \leq 1$.
\end{proposition}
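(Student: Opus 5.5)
The plan is to write out the first-order condition of \eqref{Prob_Pen} explicitly, multiply it on the left by $X\tp$, and read off a matrix identity that controls $W := -X\tp J_n X J_p$. Throughout, set $M := X\tp J_n X$, which is skew-symmetric, and $E := M - J_p$, which is also skew-symmetric.

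\textbf{Step 1: the gradient.} Differentiating $f_\beta$, the penalty term contributes $\frac{\beta}{2}\langle E, \mathrm{d}X\tp J_n X + X\tp J_n \mathrm{d}X\rangle$. Using $J_n\tp=-J_n$ and $E\tp=-E$, the two pieces coincide, and one gets
\begin{equation*}
\nabla f_\beta(X) = AX - \beta J_n X E .
\end{equation*}
I will double-check the sign and constant, but only the structure matters below. At a first-order stationary point this gives $AX = \beta J_n X E$. Multiplying on the left by $X\tp$ yields
\begin{equation*}
X\tp A X = \beta M E = \beta\ssp{M^2 - M J_p}.
\end{equation*}

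\textbf{Step 2: symmetry and positive semi-definiteness.} The left-hand side $X\tp A X$ is symmetric positive semi-definite. The matrix $M^2 = -M\tp M$ is symmetric. Hence $MJ_p$ is symmetric, i.e.\ $W=-MJ_p$ is symmetric. Rearranging the identity of Step 1 gives
\begin{equation*}
W = \tfrac{1}{\beta}X\tp A X + M\tp M,
\end{equation*}
a sum of two positive semi-definite matrices, so $W \succeq 0$.

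\textbf{Step 3: skew-Hamiltonian property.} Since $J_p^2=-I$, we have $WJ_p = -MJ_p^2 = M$, which is skew-symmetric. This is exactly the definition of a skew-Hamiltonian matrix.

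\textbf{Step 4: the norm bound.} This is the only step needing a small idea. Invert the relation to get $M = WJ_p$. Because $M$ is skew-symmetric and $W$ is symmetric, the condition $(WJ_p)\tp = -WJ_p$ reads $J_pW = WJ_p$, so $W$ commutes with $J_p$. Then
\begin{equation*}
M\tp M = J_p\tp W^2 J_p = W^2 .
\end{equation*}
Substituting into Step 2 gives $W - W^2 = \frac{1}{\beta}X\tp A X \succeq 0$. Since $W$ is symmetric, each eigenvalue $\lambda$ of $W$ satisfies $\lambda-\lambda^2\ge 0$, i.e.\ $\lambda\in[0,1]$. Therefore $\|W\|_2\le 1$.

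I expect no real obstacle beyond getting the gradient's sign convention right and noticing the commutation $J_pW=WJ_p$. Theorem~\ref{thm:PVL} does not appear to be needed for this proposition.
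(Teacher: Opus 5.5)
Your proof is correct and follows essentially the same route as the paper. Both multiply the stationarity condition $AX = \beta J_n X(X\tp J_nX - J_p)$ on the left by $X\tp$ to get $W = \frac{1}{\beta}X\tp AX + WW\tp$, read off symmetry and positive semi-definiteness, and conclude $\|W\|_2\le 1$ from $W - W^2 = \frac{1}{\beta}X\tp AX\succeq 0$. The only cosmetic difference is that the paper writes your $M\tp M$ as $WW\tp$ (equal since $MM\tp = WJ_pJ_p\tp W\tp$ and $M$ is normal), so $WW\tp = W^2$ follows from symmetry of $W$ without your commutation step $J_pW = WJ_p$.
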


\begin{proof}
From the first-order optimality condition $\nabla f_\beta(X) = 0$, we have
\begin{equation}
\label{eq:grad-fbeta-X-0}
AX + \beta J_n X J_p (X^\top J_n X J_p)^\top + \beta J_n X J_p = 0.
\end{equation}
Define $W := -X^\top J_n X J_p$. Multiplying \eqref{eq:grad-fbeta-X-0} on the left by $X^\top$ yields $X^\top AX + \beta WW^\top - \beta W = 0$. 
Rearranging terms, we obtain $W = \frac{1}{\beta} X^\top AX + WW^\top$. 
This expression shows that $W$ is symmetric positive semi-definite. Moreover, the inequality $W - W^2 = \frac{1}{\beta} X^\top AX \succeq 0$ implies $\|W\|_2 \leq 1$ by the spectral theorem.

To verify the skew-Hamiltonian property, observe that
\begin{equation*}
W J_p = X^\top J_n X = -(X^\top J_n X)^\top = -(W J_p)^\top.
\end{equation*}
This confirms that $W$ is skew-Hamiltonian, completing the proof.
\end{proof}

\begin{lemma}
\label{lem:TDT}
If $W \in \mathbb{R}^{2p \times 2p}$ is both symmetric and skew-Hamiltonian, then there exists an orthogonal symplectic matrix $T \in \mathrm{OrSp}(2p)$ that diagonalizes $W$ into the form
\begin{equation*}
T^\top W T = D \oplus D,
\end{equation*}
where $D \in \mathbb{R}^{p \times p}$ is a diagonal matrix.
\end{lemma}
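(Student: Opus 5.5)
Our plan is to use the Paige/Van Loan form of Theorem~\ref{thm:PVL} and exploit symmetry. Since $W$ is skew-Hamiltonian, Theorem~\ref{thm:PVL} provides $U\in\mathrm{OrSp}(2p)$ with
\begin{equation*}
U\tp W U = \begin{bmatrix} W_{11} & W_{12}\\ 0 & W_{11}\tp \end{bmatrix}.
\end{equation*}
Because $U$ is orthogonal and $W$ is symmetric, $U\tp W U$ is symmetric. Comparing blocks gives $W_{12}=0$ (the $(2,1)$ block is zero, so its transpose, the $(1,2)$ block, must vanish) and $W_{11}=W_{11}\tp$. Hence $U\tp W U = W_{11}\oplus W_{11}$ with $W_{11}$ symmetric. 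We do not need the Hessenberg structure.

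Next we diagonalize $W_{11}$ by the spectral theorem: $Q\tp W_{11} Q = D$ with $Q\in\mathrm{Orth}(p)$ and $D$ diagonal. Set $V := Q\oplus Q$. This $V$ is orthogonal. It is also symplectic, since
\begin{equation*}
V\tp J_p V = \begin{bmatrix} 0 & Q\tp Q\\ -Q\tp Q & 0\end{bmatrix} = J_p .
\end{equation*}
Thus $V\in\mathrm{OrSp}(2p)$ and $V\tp (W_{11}\oplus W_{11}) V = D\oplus D$.

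Finally we take $T := UV$. Products of orthogonal matrices are orthogonal, and $(UV)\tp J_p (UV) = V\tp U\tp J_p U V = V\tp J_p V = J_p$, so $T\in\mathrm{OrSp}(2p)$ and $T\tp W T = D\oplus D$. The only step that needs care is the first one: checking that symmetry kills the off-diagonal block and makes $W_{11}$ symmetric. That is a direct block comparison, and I do not expect any real obstacle in it.
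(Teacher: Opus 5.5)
Your proof is correct and follows the same route as the paper: you apply the Paige/Van Loan form of Theorem~\ref{thm:PVL}, use symmetry to force $W_{12}=0$ and $W_{11}=W_{11}\tp$, and then diagonalize. Your explicit conjugation by $Q\oplus Q\in\mathrm{OrSp}(2p)$ fills a step the paper glosses over when it simply sets $D=W_{11}$. That shortcut only works under the quasi-triangular reading of the theorem, because a symmetric upper Hessenberg matrix is in general only tridiagonal.
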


\begin{proof}
By Theorem \ref{thm:PVL}, there exists an orthogonal symplectic matrix $T \in \mathrm{OrSp}(2p)$ such that
\begin{equation*}
T^\top W T = \begin{bmatrix}
W_{11} & W_{12} \\
0 & W_{11}^\top
\end{bmatrix}.
\end{equation*}
Since $W$ is symmetric, we have
\begin{equation*}
\begin{bmatrix}
W_{11} & W_{12} \\
0 & W_{11}^\top
\end{bmatrix}
=
\begin{bmatrix}
W_{11}^\top & 0 \\
W_{12}^\top & W_{11}
\end{bmatrix}.
\end{equation*}
Equating the blocks, it follows that $W_{12} = 0$ and $W_{11} = W_{11}^\top$. Thus, $W_{11}$ is symmetric and therefore diagonalizable. Let $D = W_{11}$, and the result follows.
\end{proof}

The following proposition characterizes the structure of first-order stationary points for \eqref{Prob_Pen}, showing they can be decomposed in terms of symplectic eigenvectors of $A$. 
\begin{proposition}
\label{prop:stpform}
For any $\beta > 0$, a matrix $X \in \mathbb{R}^{2n \times 2p}$ is a first-order stationary point of \eqref{Prob_Pen} if and only if it admits the decomposition:
\begin{equation*}
X = \mmp{\hat{S}_1 \left(I_q - \hat{D}/\beta\right)^{1/2} \;\; 0_{2n \times (p-q)} \;\; \hat{S}_2 \left(I_q - \hat{D}/\beta\right)^{1/2} \;\; 0_{2n \times (p-q)}} T^\top,
\end{equation*}
where $q \leq p$, $\hat{S} = \begin{bmatrix}\hat{S}_1 & \hat{S}_2\end{bmatrix} \in \mathrm{Sp}(2q, 2n)$ consists of $q$ symplectic eigenvector pairs of $A$, diagonalizing $A$ in the sense
\begin{equation*}
\hat{S}^\top A \hat{S} = \hat{D} \oplus \hat{D},
\end{equation*}
with $\beta I_q \succ \hat{D} \succ 0$, and $T \in \mathrm{OrSp}(2p)$.
\end{proposition}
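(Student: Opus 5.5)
The plan is to prove both directions by reducing the stationarity equation \eqref{eq:grad-fbeta-X-0} to a diagonal normal form. Proposition \ref{prop:SPD-skew-Ham} and Lemma \ref{lem:TDT} provide this reduction, and Williamson-type uniqueness then identifies the nonzero columns as symplectic eigenvectors.

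\emph{Necessity.} Let $X$ be stationary and set $W=-X\tp J_nXJ_p$. By Proposition \ref{prop:SPD-skew-Ham}, $W$ is symmetric positive semi-definite and skew-Hamiltonian with $\|W\|_2\le 1$. Lemma \ref{lem:TDT} then gives $T\in\mathrm{OrSp}(2p)$ with $T\tp WT=\Lambda\oplus\Lambda$, where $\Lambda$ is diagonal with entries in $[0,1]$. Set $Y=XT$. Because $T$ is orthogonal and symplectic, $f_\beta(YT\tp)=f_\beta(Y)$, so $Y$ is also stationary, and $-Y\tp J_nYJ_p=\Lambda\oplus\Lambda$. Permuting within $\mathrm{OrSp}(2p)$, which acts by the same permutation on both halves, we may order $\Lambda=\Diag(\lambda_1,\dots,\lambda_q,0,\dots,0)$ with $\lambda_i>0$.

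The identity $W-W^2=\frac1\beta Y\tp AY$, taken from the proof of Proposition \ref{prop:SPD-skew-Ham}, now does two things:
\begin{itemize}
\item The diagonal entries belonging to $\lambda_i=0$ force the corresponding columns of $Y$ to vanish, since $A\succ0$.
\item The same identity shows $\lambda_i<1$ for the remaining indices. Otherwise $\lambda_i=1$ would make a column $y$ satisfy $y\tp Ay=0$, so $y=0$, contradicting $\lambda_i>0$.
\end{itemize}
For the nonzero columns $Z=[Z_1\;Z_2]\in\bb{R}^{2n\times 2q}$ we have $Z\tp J_nZ=\Lambda_q J_q$. Define $\hat S_j=Z_j\Lambda_q^{-1/2}$, so that $\hat S\in\mathrm{Sp}(2q,2n)$. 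The stationarity equation restricted to these columns reads $AZ=\beta J_nZJ_q(I-\Lambda_q)$. Multiplying by $\hat S\tp$ gives
\[
\hat S\tp A\hat S=\beta(I-\Lambda_q)\oplus\beta(I-\Lambda_q).
\]
Setting $\hat D=\beta(I-\Lambda_q)$ yields $0\prec\hat D\prec\beta I$ and $\Lambda_q=I-\hat D/\beta$. The equation $A\hat S=J_n\hat S J_q\tp(\hat D\oplus\hat D)$ is exactly the symplectic eigenvector relation, so the pairs $(\hat S_i,\hat S_{i+q})$ are symplectic eigenvector pairs. Reassembling $X=YT\tp$ gives the claimed form.

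\emph{Sufficiency.} This direction is a direct computation. Take $X$ of the stated form, drop $T$ by the invariance above, and plug into the gradient. Write $\Lambda=I-\hat D/\beta$. Then $Y\tp J_nYJ_p=-(\Lambda\oplus0)\oplus(\Lambda\oplus0)$, and the gradient columns reduce to
\[
A\hat S\Lambda^{1/2}-\beta J_n\hat S J_q\tp\bigl((I-\Lambda)\Lambda^{1/2}\oplus(I-\Lambda)\Lambda^{1/2}\bigr).
\]
This vanishes by the eigenvector relation, because $\beta(I-\Lambda)=\hat D$.

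\emph{Main obstacle.} The delicate step is keeping the sign conventions straight in the stationarity equation after the change of variables, in particular the factor $J_p$ versus $J_p\tp$ and its interaction with $\Lambda\oplus\Lambda$. The other point needing care is justifying that a symplectic basis satisfying $A\hat S=J_n\hat SJ_q\tp(\hat D\oplus\hat D)$ indeed consists of symplectic eigenvector pairs, which follows directly from the definition stated in the introduction. I would first verify the block computation carefully with $q=p$ and then handle the zero columns separately.
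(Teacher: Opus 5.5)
Your proof is correct and follows the paper's route (normalize $-X^\top J_nXJ_p$ by Proposition~\ref{prop:SPD-skew-Ham} and Lemma~\ref{lem:TDT}, rescale the surviving columns into a symplectic $\hat S$, read the eigenvector relation off the stationarity equation, and check sufficiency by substitution); your use of the diagonal of $W-W^2=\frac{1}{\beta}Y^\top AY$ to kill the $\lambda_i=0$ columns and to exclude $\lambda_i=1$ is cleaner than the paper's step, which reads $A[Y_1\;Y_2]=0$ off the rescaled equation (on that block the equation actually gives $AY_1=\beta J_nY_2$ and $AY_2=-\beta J_nY_1$) and leaves $\hat D\succ 0$ unjustified. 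One sign slip: the restricted stationarity equation should be $AZ=\beta J_nZJ_q^\top\bigl((I-\Lambda_q)\oplus(I-\Lambda_q)\bigr)$ rather than using $J_q$, since as you wrote it, multiplying by $\hat S^\top$ would give $-\beta\bigl((I-\Lambda_q)\oplus(I-\Lambda_q)\bigr)$; everything you derive afterwards already carries the correct sign.
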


\begin{proof}
We first prove necessity. Assume $X \in \mathbb{R}^{2n \times 2p}$ is a stationary point of \eqref{Prob_Pen}. By Proposition \ref{prop:SPD-skew-Ham} and Lemma \ref{lem:TDT}, there exist $T \in \mathrm{OrSp}(2p)$ and a diagonal matrix $\hat{\Sigma} = \Diag(\sigma_1, \dots, \sigma_q) \in \mathbb{R}^{q \times q}$ with $1 \geq \sigma_1 \geq \sigma_2 \geq \dots \geq \sigma_q > 0$ such that
\begin{equation}
\label{eq:stationary-W}
-X^\top J_n X J_p = T \ssp{\hat{\Sigma} \oplus 0_{(p-q) \times (p-q)} \oplus \hat{\Sigma} \oplus 0_{(p-q) \times (p-q)}} T^\top.
\end{equation}
Multiplying \eqref{eq:stationary-W} on the right by the matrix $T \ssp{\hat{\Sigma}^{-1/2} \oplus I_{p-q} \oplus \hat{\Sigma}^{-1/2} \oplus I_{p-q}}$, and on the left by the matrix $\ssp{\hat{\Sigma}^{-1/2} \oplus I_{p-q} \oplus \hat{\Sigma}^{-1/2} \oplus I_{p-q}} T^\top$ yields
\begin{equation}
\begin{aligned}
&\left(X T \ssp{\hat{\Sigma} \oplus I_{p-q} \oplus \hat{\Sigma} \oplus I_{p-q}}^{-1/2}\right)^\top J_n \left(X T \ssp{\hat{\Sigma} \oplus I_{p-q} \oplus \hat{\Sigma} \oplus I_{p-q}}^{-1/2}\right) \\
={}& J_p \left(I_q \oplus 0_{p-q} \oplus I_q \oplus 0_{p-q}\right). 
\end{aligned}
\end{equation}
Define
\begin{equation}
\tilde{S} := X T \ssp{\hat{\Sigma} \oplus I_{p-q} \oplus \hat{\Sigma} \oplus I_{p-q}}^{-1/2} = \mmp{\hat{S}_1 \;\; Y_1 \;\; \hat{S}_2 \;\; Y_2}.
\end{equation}
Then let $\hat{S} := \mmp{\hat{S}_1 \;\; \hat{S}_2} \in \mathrm{Sp}(2q, 2n)$, and $X$ can be expressed as
\begin{equation*}
X = \tilde{S} \ssp{\hat{\Sigma} \oplus I_{p-q} \oplus \hat{\Sigma} \oplus I_{p-q}}^{1/2} T^\top.
\end{equation*}
Let $\tilde{\Sigma} := \hat{\Sigma} \oplus I_{p-q} \oplus \hat{\Sigma} \oplus I_{p-q}$. Substituting into the stationary condition $\nabla f_\beta(X) = AX - \beta J_n X (X^\top J_n X - J_p) = 0$, we have
\begin{equation}
\begin{aligned}
\beta J_n X \left(X^\top J_n X - J_p\right) = -\beta J_n \tilde{S} \tilde{\Sigma}^{1/2} \left(I_{2p} - \tilde{\Sigma}\right) T^\top, \quad \text{and} \quad 
AX = A \tilde{S} \tilde{\Sigma}^{1/2} T^\top.
\end{aligned}
\end{equation}
Equating terms gives $A \tilde{S} = J_n \tilde{S} J_p \cdot \beta \left(\tilde{\Sigma} - I_{2p}\right)$, 
which leads to $A \hat{S} = J_n \hat{S} J_p \cdot \beta \left(\hat{\Sigma} \oplus \hat{\Sigma} - I_{2q}\right)$ and $A \mmp{Y_1 \;\; Y_2} = 0_{2n \times 2(p-q)}$. Let $\hat{D} := \beta (\hat{\Sigma} - I_q)$, whose diagonal elements are the $q$ symplectic eigenvalues of $A$, with $\beta I_q \succ \hat{D} \succ 0$. Then we have
\begin{equation}
X = \mmp{\hat{S}_1 \left(I_q - \hat{D}/\beta\right)^{1/2} \;\; 0_{2n \times (p-q)} \;\; \hat{S}_2 \left(I_q - \hat{D}/\beta\right)^{1/2} \;\; 0_{2n \times (p-q)}} T^\top.
\end{equation}
This completes the first part of the proof. 

For sufficiency, let
\begin{equation}
X = \mmp{\hat{S}_1 \left(I_q - \hat{D}/\beta\right)^{1/2} \;\; 0_{2n \times (p-q)} \;\; \hat{S}_2 \left(I_q - \hat{D}/\beta\right)^{1/2} \;\; 0_{2n \times (p-q)}} T^\top.
\end{equation}
Substituting into $\nabla f_\beta(X) = 0$ and simplifying verifies that the condition holds. Thus, $X$ is a first-order stationary point of \eqref{Prob_Pen}. This completes the entire proof.
\end{proof}

Then, the following proposition establishes an injective correspondence between the sets of full-rank first-order stationary points of the penalized problem \eqref{Prob_Pen} and the original problem \eqref{Prob_Ori}.
\begin{proposition}
\label{prop:corr}
Let $\mathcal{S}_{\mathrm{tpm}}$ denote the set of full-rank first-order stationary points of \eqref{Prob_Pen}, and let $\mathcal{S}_{\mathrm{cop}}$ denote the set of first-order stationary points of \eqref{Prob_Ori}. The mapping
\begin{equation*}
\mathcal{T} : \mathcal{S}_{\mathrm{tpm}} \rightarrow \mathcal{S}_{\mathrm{cop}}, \quad \hat{S}(I_{2p} - \hat{D}/\beta)^{1/2}T^\top \mapsto \hat{S}T^\top,
\end{equation*}
is well-defined and injective.
\end{proposition}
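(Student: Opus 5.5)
We will work from Proposition \ref{prop:stpform}, restricted to full-rank stationary points. If $X\in\mathcal{S}_{\mathrm{tpm}}$ has rank $2p$, then in the decomposition of Proposition \ref{prop:stpform} we must have $q=p$: otherwise $X$ contains $2(p-q)$ zero columns before multiplication by $T^\top$, so its rank is at most $2q<2p$. Hence every full-rank stationary point has the form $X=\hat S(I_{2p}-\hat D/\beta)^{1/2}T^\top$. Here $\hat S\in\mathrm{Sp}(2p,2n)$ satisfies $\hat S^\top A\hat S=\hat D\oplus\hat D$, $T\in\mathrm{OrSp}(2p)$, and we abbreviate $\hat D\oplus\hat D$ as $\hat D$ when it multiplies a $2p$-column matrix. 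This shows the domain of $\mathcal{T}$ is described exactly by the formula in the statement.

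\textbf{Well-definedness.} We must show that the image $\hat S T^\top$ does not depend on the chosen representation of $X$. The plan is to recover $\hat S T^\top$ intrinsically from $X$. Compute $W:=-X^\top J_nXJ_p$. Since $T$ is orthogonal symplectic, $T^\top J_p T=J_p$ and $T J_p=J_pT$. Together with $\hat S^\top J_n\hat S=J_p$ and the fact that the diagonal matrix $\Sigma:=(I-\hat D/\beta)\oplus(I-\hat D/\beta)$ commutes with $J_p$, this gives $W=T\Sigma T^\top$, a symmetric positive definite matrix. Hence $W^{1/2}=T\Sigma^{1/2}T^\top$ is uniquely determined by $X$, and
\[
XW^{-1/2}=\hat S\Sigma^{1/2}T^\top T\Sigma^{-1/2}T^\top=\hat ST^\top .
\]
So $\mathcal{T}(X)=X\,(-X^\top J_nXJ_p)^{-1/2}$ depends only on $X$.

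\textbf{Image lies in $\mathcal{S}_{\mathrm{cop}}$.} Let $Y=\hat S T^\top$. Feasibility holds because $Y^\top J_nY=TJ_pT^\top=J_p$. For stationarity, the symplectic eigenvector relation from Proposition \ref{prop:stpform} reads $A\hat S=J_n\hat SJ_p^\top\hat D=-J_n\hat SJ_p\hat D$. Therefore $AY=J_nY\,L$ with $L:=-TJ_p\hat DT^\top$, which is exactly the first-order condition for \eqref{Prob_Ori}.

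\textbf{Injectivity.} This is the step that needs care. Suppose $\mathcal{T}(X_1)=\mathcal{T}(X_2)=Y$. The multiplier is determined by $Y$: since $Y^\top J_nY=J_p$, we get $L=-J_pY^\top AY$. So $L_1=L_2$, i.e. $T_1J_p\hat D_1T_1^\top=T_2J_p\hat D_2T_2^\top$. Squaring, and using $J_p\hat DJ_p\hat D=-\hat D^2$ (since $\hat D\oplus\hat D$ commutes with $J_p$), gives $T_1\hat D_1^2T_1^\top=T_2\hat D_2^2T_2^\top$. Equivalently, $T_1\hat D_1T_1^\top=T_2\hat D_2T_2^\top$, because the positive square root of a positive definite matrix is unique. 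Consequently $T_i\Sigma_i^{1/2}T_i^\top=(I-T_i\hat D_iT_i^\top/\beta)^{1/2}$ is the same matrix for $i=1,2$. Finally, $X_i=Y\,T_i\Sigma_i^{1/2}T_i^\top$, so $X_1=X_2$.

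The main obstacle is exactly this use of square-root uniqueness to pass from equality of the multipliers to equality of $T_i\hat D_iT_i^\top$. It relies on $\hat D\succ0$, which Proposition \ref{prop:stpform} supplies through the condition $\beta I\succ\hat D\succ0$.
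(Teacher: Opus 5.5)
Your proof is correct, and it takes a different and more careful route than the paper's. The paper gets well-definedness by asserting that a full-rank stationary point has a \emph{unique} factorization $X=\hat S(I_{2p}-\hat D/\beta)^{1/2}T^\top$. That assertion is false: you may replace $\hat S,T$ by $\hat SQ,TQ$ for any $Q\in\mathrm{OrSp}(2p)$ commuting with $\hat D\oplus\hat D$, for example a rotation within each symplectic pair. The paper's injectivity argument is only a case distinction on whether the factors $\hat S$ or $T$ differ, and it never explains how $X$ is recovered from $\hat ST^\top$. Your approach avoids both problems by working with intrinsic formulas. Writing $\mathcal{T}(X)=X(-X^\top J_nXJ_p)^{-1/2}$ makes well-definedness automatic, even though the factors are not unique. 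For injectivity you reconstruct $X$ from its image $Y$ alone. You also check explicitly that the image satisfies the constraint and the first-order condition of \eqref{Prob_Ori}, with the multiplier written out; the paper leaves this implicit. One simplification is available. The squaring and square-root step you flag as the main obstacle is unnecessary, because $Y^\top AY=T\hat S^\top A\hat ST^\top=T\hat DT^\top$ directly (equivalently $J_pL=Y^\top AY$). This gives the explicit inverse $Y\mapsto Y(I_{2p}-Y^\top AY/\beta)^{1/2}$ on the image of $\mathcal{T}$. In short, the paper's argument is shorter but depends on a uniqueness claim that does not hold, while yours is self-contained and gives closed forms for $\mathcal{T}$ and its inverse.
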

\begin{proof}
To show that $\mathcal{T}$ is well-defined, note that any full-rank stationary point $X \in \mathcal{S}_{\mathrm{tpm}}$ admits the unique decomposition $X = \hat{S}(I_{2p} - \hat{D}/\beta)^{1/2}T^\top$, where $\hat{S}$ corresponds to the columns of the Williamson diagonalizing matrix $S$. Thus, the mapping $\mathcal{T}(X) = \hat{S}T^\top$ is uniquely determined.

For injectivity, consider two distinct stationary points $X, Y \in \mathcal{S}_{\mathrm{tpm}}$ such that $X \neq Y$. If $\mathcal{T}(X) = \mathcal{T}(Y)$, then $\hat{S}_X T_X^\top = \hat{S}_Y T_Y^\top$. This implies either $\hat{S}_X \neq \hat{S}_Y$ (distinct symplectic components) or $T_X \neq T_Y$ (distinct orthogonal transformations), both of which contradict the assumption that $X \neq Y$. Therefore, $\mathcal{T}(X) \neq \mathcal{T}(Y)$, proving injectivity.

Therefore, we can conclude that $\mathcal{T}$ is well-defined and injective. This completes the proof. 
\end{proof}

Proposition \ref{prop:corr} establishes that the number of full-rank first-order stationary points in the penalized problem \eqref{Prob_Pen} does not exceed that of the original constrained problem \eqref{Prob_Ori}. Consequently, by selecting appropriate penalty parameters, the proposed unconstrained approach identifies the optimal symplectic eigenspace with fewer spurious saddle points compared to the original constrained formulation.

Next, the following lemma demonstrates that every matrix $X \in \mathbb{R}^{2n \times 2p}$ with full symplectic rank admits a symplectic singular value decomposition (SSVD).
\begin{lemma}
\label{lem:sym-svd}
Assume $X \in \mathbb{R}^{2n \times 2p}$ satisfies $X\tp J_nX$ is invertible, then $X$ admits a symplectic singular value decomposition (SSVD) of the form
\begin{equation*}
X = S \Sigma T^\top,
\end{equation*}
where $S \in \mathrm{Sp}(2p, 2n)$, $T \in \mathrm{Orth}(2p)$, and $\Sigma = \Diag(\sigma_1, \dots, \sigma_p, \sigma_1, \dots, \sigma_p)$ with $0 < \sigma_1 \leq \dots \leq \sigma_p$.
\end{lemma}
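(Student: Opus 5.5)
The plan is to construct the decomposition directly from the skew-symmetric matrix $K := X^\top J_n X \in \mathbb{R}^{2p\times 2p}$. It is invertible by assumption, and it is skew-symmetric because $J_n^\top = -J_n$. The orthogonal factor $T$ will come from an orthogonal canonical form of $K$. The diagonal factor $\Sigma$ will be the square roots of the moduli of the eigenvalues of $K$. The symplectic factor is then forced to be $S := X T \Sigma^{-1}$. Theorem \ref{thm:PVL} and Lemma \ref{lem:TDT} are not needed here; the real spectral theorem for normal (skew-symmetric) matrices suffices.

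\textbf{Step 1 (canonical form of $K$).} Since $K$ is real skew-symmetric, hence normal, there is an orthogonal $Q_0$ such that $Q_0^\top K Q_0$ is block diagonal with $2\times 2$ blocks $\begin{bmatrix} 0 & \mu_i \\ -\mu_i & 0\end{bmatrix}$, $i=1,\dots,p$. There are no $1\times 1$ zero blocks, because $K$ is invertible and of even order $2p$, so every $\mu_i \neq 0$.

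\textbf{Step 2 (normalize signs and order).} If some $\mu_i<0$, swap the two columns of $Q_0$ belonging to that block. This is an orthogonal change that flips the sign of $\mu_i$, so we may take $\lambda_i := |\mu_i| > 0$. We also reorder the blocks so that $\lambda_1\le\dots\le\lambda_p$. Finally, apply the perfect-shuffle permutation, which sends basis vector $2i-1\mapsto i$ and $2i\mapsto p+i$. The result is an orthogonal $T\in\mathrm{Orth}(2p)$ with
\begin{equation*}
T^\top K T = \begin{bmatrix} 0 & \Lambda \\ -\Lambda & 0 \end{bmatrix}, \qquad \Lambda = \Diag(\lambda_1,\dots,\lambda_p).
\end{equation*}
I expect this bookkeeping to be the only delicate point. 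One must check that the sign flips, the block reordering and the shuffle are all orthogonal permutations, possibly combined with column swaps, and that together they produce exactly the $J_p$-type layout with positive entries. Each of these operations is a permutation matrix, so orthogonality is immediate.

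\textbf{Step 3 (define $\Sigma$ and $S$, and verify).} Set $\sigma_i := \lambda_i^{1/2}$, $\Sigma := \Diag(\sigma_1,\dots,\sigma_p,\sigma_1,\dots,\sigma_p) = \Lambda^{1/2}\oplus\Lambda^{1/2}$, and $S := X T \Sigma^{-1}$. Then
\begin{equation*}
S^\top J_n S = \Sigma^{-1} T^\top K T \Sigma^{-1} = \begin{bmatrix} 0 & \Lambda^{-1/2}\Lambda\Lambda^{-1/2} \\ -\Lambda^{-1/2}\Lambda\Lambda^{-1/2} & 0\end{bmatrix} = J_p.
\end{equation*}
Here we use that diagonal matrices commute. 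Hence $S\in\mathrm{Sp}(2p,2n)$. Since $T$ is orthogonal, $X = S\Sigma T^\top$, with $0<\sigma_1\le\dots\le\sigma_p$ as required, which completes the argument.
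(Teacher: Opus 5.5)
Your proposal is correct and follows essentially the same route as the paper. Both take a real canonical (Schur) form $Q$ of the skew-symmetric matrix $X^\top J_n X$, apply the perfect-shuffle permutation $P$ to reach $\begin{bmatrix} 0 & D \\ -D & 0 \end{bmatrix}$, and set $T := QP$, $\Sigma := \Diag(D,D)^{1/2}$, $S := XT\Sigma^{-1}$; your explicit sign normalization by column swaps and your ascending ordering of the $\sigma_i$ fill in details the paper's proof takes for granted.
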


\begin{proof}
Since $X^\top J_n X$ is skew-symmetric, it admits a real Schur decomposition of the form
\begin{equation*}
X\tp J_nX = Q \ssp{\begin{bmatrix}0 & d_1\\ -d_1 & 0\end{bmatrix}\oplus\dots\oplus\begin{bmatrix}0 & d_p\\ -d_p & 0\end{bmatrix}} Q\tp.
\end{equation*}
where $Q \in \mathrm{Orth}(2p)$ and $d_1, \dots, d_p > 0$. Define the permutation matrix
\begin{equation*}
P := \begin{bmatrix} e_1, e_3, \dots, e_{2p-1}, e_2, e_4, \dots, e_{2p} \end{bmatrix}
\end{equation*}
to rearrange components, yielding
\begin{equation*}
X^\top J_n X = Q P J_p \Diag(D, D) P^\top Q^\top,
\end{equation*}
where $D := \Diag(d_1, \dots, d_p)$.

Next, multiply $QP \Diag(D, D)^{-1/2}$ on the right and $\Diag(D, D)^{-1/2} P^\top Q^\top$ on the left to ensure
\begin{equation*}
\left(X QP \Diag(D, D)^{-1/2}\right)^\top J_n \left(X QP \Diag(D, D)^{-1/2}\right) = J_p.
\end{equation*}
Define
\begin{equation*}
S := X QP \Diag(D, D)^{-1/2},
\end{equation*}
which satisfies $S \in \mathrm{Sp}(2p, 2n)$. Thus, we can write
\begin{equation*}
X = S \Diag(D, D)^{1/2} P^\top Q^\top := S \Sigma T^\top,
\end{equation*}
where $\Sigma := \Diag(D, D)^{1/2}$ and $T := QP \in \mathrm{Orth}(2p)$.

This completes the proof.
\end{proof}

\begin{remark}
For the stationary points of \eqref{Prob_Pen}, full column rank implies full symplectic rank, i.e. $X\tp J_nX$ is invertible, by Proposition \ref{prop:stpform}. Thus, any full-rank stationary point admits an SSVD by Lemma \ref{lem:sym-svd}.
\end{remark}

The decomposition in Lemma \ref{lem:sym-svd} enables the extraction of symplectic eigenvalues and eigenvectors from any full-rank minimizer $X_{\mathrm{fin}}$ of \eqref{Prob_Pen} via the Symplectic Rayleigh-Ritz (SRR) procedure, as follows:
\begin{enumerate}
    \item Compute the SSVD: $X_{\mathrm{fin}} = S\Sigma T^\top$.
    \item Find $\hat{S} \in \mathrm{Sp}(2p)$ such that $\hat{S}^\top (S^\top A S) \hat{S} = D_{\mathrm{fin}}$.
    \item Set $S_{\mathrm{fin}} := S\hat{S}$.
\end{enumerate}
We denote this process as $(S_{\mathrm{fin}}, D_{\mathrm{fin}}) = \mathrm{SRR}(X_{\mathrm{fin}})$.

Then, we present the following proposition, which establishes an upper bound for $\|AS - J_n SL\|_F$.

\begin{proposition}
\label{prop:corr2}
For any matrix $X \in \mathbb{R}^{2n \times 2p}$ satisfying $X\tp J_nX$ is invertible with SSVD $X = S\Sigma T^\top$, there exists $L \in \mathbb{R}^{2p \times 2p}$ such that
\begin{equation*}
\|AS - J_n SL\|_F \leq \sqrt{\frac{2p d_n}{\sigma_{\min}(X^\top AX)}} \cdot \|\nabla f_\beta(X)\|_F.
\end{equation*}
\end{proposition}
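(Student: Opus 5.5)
The plan is to rewrite the gradient in the coordinates of the SSVD from Lemma \ref{lem:sym-svd}, read off a candidate $L$, and then bound $\|\Sigma^{-1}\|_2$ by the ratio $d_n/\sigma_{\min}(X\tp AX)$.

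\textbf{Step 1: a candidate $L$.} Write $X=S\Sigma T\tp$ with $S\in\mathrm{Sp}(2p,2n)$, $T\in\mathrm{Orth}(2p)$ and $\Sigma=\Diag(\sigma_1,\dots,\sigma_p,\sigma_1,\dots,\sigma_p)$. Using $S\tp J_nS=J_p$ gives
\begin{equation*}
X\tp J_nX = T\Sigma J_p\Sigma T\tp .
\end{equation*}
Since $\nabla f_\beta(X)=AX-\beta J_nX(X\tp J_nX-J_p)$, multiplying on the right by $T$ yields
\begin{equation*}
\nabla f_\beta(X)\,T = AS\Sigma-\beta J_nS\Sigma\bigl(\Sigma J_p\Sigma-T\tp J_pT\bigr).
\end{equation*}
$T$ is only orthogonal, so $T\tp J_pT$ stays as is; this does no harm because it is absorbed into $L$. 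Multiplying further on the right by $\Sigma^{-1}$ gives
\begin{equation*}
\nabla f_\beta(X)\,T\Sigma^{-1}=AS-J_nSL,\qquad L:=\beta\,\Sigma\bigl(\Sigma J_p\Sigma-T\tp J_pT\bigr)\Sigma^{-1}.
\end{equation*}
Orthogonal invariance of the Frobenius norm then gives
\begin{equation*}
\|AS-J_nSL\|_F\le \|\nabla f_\beta(X)\|_F\,\|\Sigma^{-1}\|_2=\|\nabla f_\beta(X)\|_F/\sigma_1 .
\end{equation*}

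\textbf{Step 2: the key inequality.} It remains to show $\sigma_1^{2}\ge \sigma_{\min}(X\tp AX)/(2p\,d_n)$. Since $X\tp AX=T\Sigma(S\tp AS)\Sigma T\tp$ is SPD, its smallest singular value equals its smallest eigenvalue. Take the test vectors $u\in\mathrm{span}(Te_1,Te_{p+1})$, on which $\Sigma T\tp$ acts as multiplication by $\sigma_1$. For unit such $u$ this gives
\begin{equation*}
\sigma_{\min}(X\tp AX)\le \sigma_1^2\,\min_{\|v\|=1,\,v\in\mathrm{span}(e_1,e_{p+1})} v\tp S\tp ASv=\sigma_1^2\,\lambda_{\min}(M),
\end{equation*}
where $M:=s\tp As\in\mathbb{R}^{2\times2}$ and $s=[S e_1,\;S e_{p+1}]\in\mathrm{Sp}(2,2n)$.

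\textbf{Step 3 (main obstacle): bounding $\lambda_{\min}(M)$.} The difficulty is that $S$ is non-compactly constrained, so $\|S\tp AS\|_2$ may be arbitrarily large and cannot be used directly; only the \emph{smallest} eigenvalue can be controlled. My approach is to apply Williamson's theorem to the $2\times2$ SPD matrix $M$, writing $M=R\tp(c\,I_2)R$ with $R\in\mathrm{Sp}(2)$. Then $\det M=c^2$, so $\lambda_{\min}(M)\le\sqrt{\det M}=c$. Next I would invoke the interlacing property of symplectic eigenvalues under symplectic compression $A\mapsto s\tp As$ (the Bhatia--Jain type result accompanying the trace minimization theorem cited earlier), which gives $d_1\le c\le d_n$. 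If I prefer to avoid interlacing, I would instead bound $c=\sqrt{\det M}$ via $\det(s\tp As)\le d_n^2\det(s\tp(J_n\tp D J_n)s)$-type arguments in the Williamson basis. Either route yields $\sigma_{\min}(X\tp AX)\le\sigma_1^2 d_n$, which is stronger than needed, since the factor $2p\ge1$ is only slack. If the interlacing step turns out delicate, a cruder fallback is to use $\tr$-based bounds on the whole $2p\times2p$ block, which is presumably where the factor $2p$ in the statement comes from.

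Combining Steps 1--3 gives
\begin{equation*}
\|AS-J_nSL\|_F\le\sqrt{d_n/\sigma_{\min}(X\tp AX)}\,\|\nabla f_\beta(X)\|_F\le\sqrt{2p\,d_n/\sigma_{\min}(X\tp AX)}\,\|\nabla f_\beta(X)\|_F .
\end{equation*}
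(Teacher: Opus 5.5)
Your Steps 1 and 2 are correct. With $L=\beta\Sigma(\Sigma J_p\Sigma-T\tp J_pT)\Sigma^{-1}$ you get $\nabla f_\beta(X)T\Sigma^{-1}=AS-J_nSL$, hence $\|AS-J_nSL\|_F\le\|\nabla f_\beta(X)\|_F/\sigma_1$, and you also get $\sigma_{\min}(X\tp AX)\le\sigma_1^2\lambda_{\min}(M)$.

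The argument breaks in Step 3. For $n\ge 2$ there is no upper bound $c\le d_n$ on the symplectic eigenvalue of $s\tp As$ when $s\in\mathrm{Sp}(2,2n)$ is arbitrary. Because $\mathrm{Sp}(2,2n)$ is non-compact, trace minimization only gives the lower bound $c\ge d_1$. Concretely, take $n=2$, $A=I_4$ (so $d_1=d_2=1$), $t\ge 1$, and
\begin{equation*}
s=\mmp{\,te_1+t^{-1}e_2\;\; te_4\,},\qquad s\tp J_2 s=J_1,\qquad s\tp As=\Diag\ssp{t^2+t^{-2},\,t^2}.
\end{equation*}
Then $\lambda_{\min}(M)=t^2$ and $c=\sqrt{t^4+1}$ are both unbounded. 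For $X=s$, where $\sigma_1=1$, your target inequality $\sigma_{\min}(X\tp AX)\le\sigma_1^2 d_n$ reads $t^2\le 1$. The determinant variant fails for the same reason.

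The gap cannot be closed, because this same $X=s$ violates the proposition itself. Since $X\tp J_2X=J_1$, every SSVD of $X$ has $\Sigma=I_2$ and $T$ a rotation, so $S=sT$. Also $\nabla f_\beta(X)=AX=s$ for every $\beta$. The right-hand side therefore equals $\sqrt{2/t^2}\,\|s\|_F=\sqrt{4+2t^{-4}}$. On the other side, $\mathrm{span}(J_2s)=\mathrm{span}\{e_2,\,te_3+t^{-1}e_4\}$, which gives
\begin{equation*}
\min_{L}\|AS-J_2SL\|_F^2=\min_{L}\|s-J_2sL\|_F^2=2t^2-\ssp{t^2+t^{-2}}^{-1}.
\end{equation*}
At $t=2$ this is $132/17\approx 7.76$, which exceeds the squared right-hand side $4.125$.

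The paper's own proof fails at the same point. It asserts $\tr(S\tp AS)\le 2pd_n$ for every $S\in\mathrm{Sp}(2p,2n)$, but the trace minimization theorem only gives the lower bound $\tr(S\tp AS)\ge 2\sum_{i\le p}d_i$; here $\tr(s\tp As)=2t^2+t^{-2}$. So your trace-based fallback does not help either. What is actually provable is your Step 1 estimate $\|AS-J_nSL\|_F\le\|\nabla f_\beta(X)\|_F/\sigma_1$, or the paper's intermediate inequality $\|\Sigma^{-1}\|_F^2\le\tr(S\tp AS)/\sigma_{\min}(X\tp AX)$. Any correct version of the statement has to keep an $S$-dependent quantity such as $\tr(S\tp AS)$ in place of $2pd_n$.
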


\begin{proof}
By the trace minimization theorem (\cite{Hiroshima2006Additivity, Bhatia2015Symplectic}), for any $S \in \mathrm{Sp}(2p, 2n)$, $\tr(S^\top AS)$ is no larger than twice the sum of the $p$ largest symplectic eigenvalues. Hence, we have
\begin{equation*}
2p d_n \geq \tr(S^\top AS) = \tr(\Sigma^{-1} T^\top X^\top AX T \Sigma^{-1}) \geq \sigma_{\min}(X^\top AX) \cdot \|\Sigma^{-1}\|_F^2.
\end{equation*}

Next, consider the gradient expression for $X = S\Sigma T^\top$. Using the definition of $\nabla f_\beta(X)$, we compute:
\begin{equation*}
\begin{aligned}
&\nabla f_\beta(X) = AX - \beta J_n X \left(X^\top J_n X - J_p\right) = AS\Sigma T^\top - \beta J_n S\Sigma T^\top \left(X^\top J_n X - J_p\right) \\
={}& AS\Sigma T^\top - \beta J_n S\Sigma T^\top \left(T\Sigma^2 J_p T^\top - J_p\right) =AS\Sigma T^\top - \beta J_n S \left(\Sigma^3 J_p T^\top - \Sigma T^\top J_p\right).
\end{aligned}
\end{equation*}
This illustrates that 
\begin{equation*}
\|AS - \beta J_n S \left(\Sigma^3 J_p T^\top - \Sigma T^\top J_p\right) T \Sigma^{-1}\|_F = \|\nabla f_\beta(X) T \Sigma^{-1}\|_F.
\end{equation*}
Applying the submultiplicative property of the Frobenius norm, we obtain:
\begin{equation*}
\begin{aligned}
&\|\nabla f_\beta(X) T \Sigma^{-1}\|_F \leq \|T \Sigma^{-1}\|_F \cdot \|\nabla f_\beta(X)\|_F \\
={}& \|\Sigma^{-1}\|_F \cdot \|\nabla f_\beta(X)\|_F  \leq  \sqrt{\frac{2p d_n}{\sigma_{\min}(X^\top AX)}} \cdot \|\nabla f_\beta(X)\|_F.
\end{aligned}
\end{equation*}
Therefore, we complete the proof with $L := \beta \left(\Sigma^3 J_p T^\top - \Sigma T^\top J_p\right) T \Sigma^{-1}$.
\end{proof}

Proposition \ref{prop:corr2} establishes that, through the SRR procedure, an $\varepsilon$-stationary point of the penalized problem \eqref{Prob_Pen} yields a $c(X)\varepsilon$-stationary point of the original problem \eqref{Prob_Ori}. Here, $c(X)=\sqrt{\frac{2p d_n}{\sigma_{\min}(X^\top AX)}}$ is a constant depending on the nonsingularity of $X$. Furthermore, when $\varepsilon$ is sufficiently small, $c(X)$ admits a uniform upper bound.

Our central equivalence result establishes that the penalty formulation precisely captures the solutions of the original constrained problem when the penalty parameter is appropriately chosen.
\begin{proposition}
Let $\beta > 0$. Then, problem \eqref{Prob_Pen} is equivalent to \eqref{Prob_Ori} if and only if $\beta > d_p$. Specifically, any global minimizer $X$ of \eqref{Prob_Pen} has the form 
\begin{equation*}
X := \bar{S}\left(I_{2p} - \frac{\bar{D}}{\beta}\right)^{1/2}T^\top,
\end{equation*}
where $\bar{S} \in \text{Sp}(2p, 2n)$ satisfies
\begin{equation*}
\bar{S}^\top A \bar{S} = \bar{D} = \mathrm{Diag}\ssp{d_1, \dots, d_p, d_1, \dots, d_p},
\end{equation*}
and $T \in \text{OrSp}(2p)$.
\end{proposition}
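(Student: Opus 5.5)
The plan is to reduce the global minimization of \eqref{Prob_Pen} to a finite comparison over the stationary points described in Proposition~\ref{prop:stpform}.

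\textbf{Step 1: a global minimizer exists and is stationary.} Since $A$ is SPD, $f_\beta(X)\ge \tfrac12\lambda_{\min}(A)\|X\|_F^2$. Hence $f_\beta$ is coercive and continuous, so a global minimizer exists. Every global minimizer satisfies $\nabla f_\beta(X)=0$, so by Proposition~\ref{prop:stpform} it has the form
\begin{equation*}
X=\mmp{\hat S_1(I_q-\hat D/\beta)^{1/2}\;\; 0\;\; \hat S_2(I_q-\hat D/\beta)^{1/2}\;\; 0}T\tp ,
\end{equation*}
with $T\in\mathrm{OrSp}(2p)$, $q\le p$, and $\hat D=\Diag(\delta_1,\dots,\delta_q)$ satisfying $0<\delta_i<\beta$.

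\textbf{Step 2: evaluate $f_\beta$ at every stationary point.} Write $\tilde\Sigma:=(I_q-\hat D/\beta)\oplus 0_{p-q}\oplus(I_q-\hat D/\beta)\oplus 0_{p-q}$.
\begin{itemize}
\item Objective term. Because $T$ is orthogonal and $\hat S\tp A\hat S=\hat D\oplus\hat D$, we get $\langle X,AX\rangle=2\sum_i\delta_i(1-\delta_i/\beta)$.
\item Penalty term. Because $\hat S\in\mathrm{Sp}(2q,2n)$, we have $X\tp J_nX=T\,\tilde\Sigma J_p\,T\tp$. Since $T$ is orthogonal symplectic, $TJ_pT\tp=J_p$, so $X\tp J_nX-J_p=T(\tilde\Sigma-I_{2p})J_pT\tp$. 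Its squared Frobenius norm is $2\sum_i\delta_i^2/\beta^2+2(p-q)$.
\end{itemize}
Combining the two terms gives
\begin{equation*}
f_\beta(X)=\sum_{i=1}^q g(\delta_i)+\frac{\beta}{2}(p-q),\qquad g(\delta):=\delta-\frac{\delta^2}{2\beta}.
\end{equation*}
The function $g$ is strictly increasing on $(0,\beta)$ with $g(\delta)<g(\beta)=\beta/2$. So each symplectic eigenpair included with $\delta_i<\beta$ strictly lowers the value compared with a zero block, and smaller $\delta_i$ lower it more.

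\textbf{Step 3: identify the minimizing configuration.} I need the fact that the diagonal entries $\delta_1,\dots,\delta_q$ of any $\hat S\in\mathrm{Sp}(2q,2n)$ with $\hat S\tp A\hat S=\hat D\oplus\hat D$ form a sub-multiset of $\{d_1,\dots,d_n\}$. Multiplicities must be respected: one cannot take $q$ symplectically orthogonal eigenpairs all with value $d_1$ unless $d_1$ has multiplicity at least $q$.

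I expect this to be the main obstacle. My first attempt is to pass to the Hermitian pencil $A-i\lambda J_n$, equivalently the skew-symmetric matrix $A^{1/2}J_nA^{1/2}$. There the eigenvalue $\pm i\delta$ corresponds to a symplectic eigenpair, and the $2q$ columns of $\hat S$ give linearly independent eigenvectors. Counting dimensions of the eigenspaces of the normal matrix $A^{1/2}J_nA^{1/2}$ then bounds how many pairs can share a value.

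Granting this fact, minimizing $\sum_{i\le q}g(\delta_i)+\tfrac\beta2(p-q)$ forces the following choices:
\begin{itemize}
\item every $d_j<\beta$ among the smallest $p$ values must be used;
\item the used values must be the smallest available ones.
\end{itemize}

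\textbf{Step 4: the two directions of the equivalence.}
\begin{itemize}
\item If $\beta>d_p$, then $q=p$ and $\{\delta_i\}=\{d_1,\dots,d_p\}$. After a permutation absorbed into $T$ (permutations of the form $P\oplus P$ lie in $\mathrm{OrSp}(2p)$), we obtain $X=\bar S(I_{2p}-\bar D/\beta)^{1/2}T\tp$ with $\bar D=\Diag(d_1,\dots,d_p,d_1,\dots,d_p)$. The column space of $X$ is therefore the span of $\bar S$, which is the eigenspace of the $p$ smallest symplectic eigenvalues, matching the global minimizers of \eqref{Prob_Ori} by the trace minimization theorem.
\item If $\beta\le d_p$, any stationary point must have $q<p$, since all $\delta_i<\beta\le d_p$ while $d_p$ is needed. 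Then the minimizer has rank at most $2q<2p$, so it cannot span the required $2p$-dimensional eigenspace, and equivalence fails.
\end{itemize}
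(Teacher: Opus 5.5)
Your proposal is correct and follows the paper's own route (classify global minimizers via Proposition~\ref{prop:stpform}, evaluate $f_\beta$ on each stationary configuration, compare with $\bar S(I_{2p}-\bar D/\beta)^{1/2}T\tp$), while supplying steps the paper leaves implicit: existence of a minimizer by coercivity, the sub-multiset property of the $\delta_i$, and the boundary case $\beta=d_p$; your zero-block contribution $\tfrac{\beta}{2}(p-q)$ is also the correct value, whereas the paper's $\beta(p-q)$ is off by a factor of two, which does not affect the comparison. One wording fix: state the multiset fact for $\hat S$ obeying the eigen-relation $A\hat S=J_n\hat S J_q\tp(\hat D\oplus\hat D)$ that Proposition~\ref{prop:stpform} provides, because symplectic diagonalization $\hat S\tp A\hat S=\hat D\oplus\hat D$ alone does not force the $\delta_i$ to be symplectic eigenvalues (for $A=I_4$, every $\delta\ge 1$ arises this way).
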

\begin{proof}
By Proposition~\ref{prop:stpform}, consider a global minimizer $X$ of \eqref{Prob_Pen} expressed as:
\begin{equation*}
X := \begin{bmatrix}
\hat{S}_1\left(I_q - \frac{\hat{D}}{\beta}\right)^{1/2} & 0_{2n \times (p-q)} & \hat{S}_2\left(I_q - \frac{\hat{D}}{\beta}\right)^{1/2} & 0_{2n \times (p-q)}
\end{bmatrix} T^\top.
\end{equation*}
The corresponding objective value is $f_\beta(X) = \frac{1}{2} \text{tr}(\hat{D}) - \frac{1}{4\beta} \text{tr}(\hat{D}^2) + \beta (p-q)$. Then let $Y := \bar{S}\left(I_{2p} - \frac{\bar{D}}{\beta}\right)^{1/2}T^\top$, we have $f_\beta(Y) = \frac{1}{2} \text{tr}(\bar{D}) - \frac{1}{4\beta} \text{tr}(\bar{D}^2)$. 

If $\beta > d_p$, the fact $f_\beta(Y) \geq f_\beta(X)$ implies $\hat{D} = \bar{D}$. Conversely, if $\beta < d_p$, all global minimizers are rank-deficient. This completes the proof. 
\end{proof}

A remarkable property of the penalty formulation is the absence of local minimizers, ensuring that any local optimization method converging to a stationary point will likely find the global solution.

\begin{theorem}\label{thm:nolocal}
For any $\beta > d_p$, the penalty problem \eqref{Prob_Pen} has no local minimizers. Specifically, each first-order stationary point of \eqref{Prob_Pen} is either a global minimizer, a saddle point, or a local maximizer.
\end{theorem}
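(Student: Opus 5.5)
Using Proposition~\ref{prop:stpform}, every first-order stationary point has the form $X=[\hat S_1(I_q-\hat D/\beta)^{1/2}\;0\;\hat S_2(I_q-\hat D/\beta)^{1/2}\;0]T^\top$. Here $q\le p$, and $\hat D$ collects $q$ symplectic eigenvalues $d_{j_1},\dots,d_{j_q}$ of $A$ with $d_{j_i}<\beta$. Since $f_\beta$ is invariant under $X\mapsto XT$ for $T\in\mathrm{OrSp}(2p)$ (because $T^\top J_pT=J_p$), we may take $T=I$. The strategy is to show that if $X$ is not a global minimizer (i.e.\ $\hat D\neq\bar D$ up to ordering with $q=p$), then there is a direction $E$ with $f_\beta(X+tE)<f_\beta(X)$ for all small $t\neq0$. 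This follows either from a strictly negative second derivative $\frac{d^2}{dt^2}f_\beta(X+tE)|_{t=0}<0$, or, in degenerate cases, from an explicit curve along which $f_\beta$ decreases. Consequently no non-global stationary point is a local minimizer, which gives the trichotomy.

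There are two ways a stationary point can fail to be global. \textbf{Case 1: rank deficiency ($q<p$).} Then some column pair of $X$, say columns $q+1$ and $p+q+1$, is zero. Pick a symplectic eigenvector pair $(s,s')=(S_j,S_{j+n})$ with $j$ not among the indices used in $\hat S$; one exists since $q<p\le n$, and we may choose $j$ with $d_j\le d_p<\beta$. Perturb those two zero columns to $t s$ and $t s'$. Because the pair is $J_n$-orthogonal to $\hat S$ and to the rest of $X$, the objective decouples. The change is $t^2 d_j+\frac{\beta}{2}\big[(t^2-1)^2-1\big]$ up to normalization, i.e.\ $t^2(d_j-\beta)+O(t^4)$. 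This is negative since $d_j<\beta$, so the point is not a local minimizer. \textbf{Case 2: full rank ($q=p$) but $\hat D\neq\bar D$.} Then some used eigenvalue $d_{j}$ exceeds some unused $d_i$ with $i\le p$, and I would first check that $d_i<d_j<\beta$. Rotate within the plane spanned by the eigenpair $S_j$ and the unused pair $S_i$: replace $\hat S$'s $j$-pair by $\cos\theta\, S_{j\text{-pair}}+\sin\theta\, S_{i\text{-pair}}$, keeping the scaling $(1-d_j/\beta)^{1/2}$. This curve preserves $X^\top J_nX$ exactly, so the penalty term is constant. The trace term changes by $(1-d_j/\beta)\sin^2\theta\,(d_i-d_j)<0$. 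Hence $f_\beta$ strictly decreases and $X$ is a saddle point or a local maximizer.

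Finally, if $q=p$ and the multiset $\hat D$ equals $\{d_1,\dots,d_p\}$, then the previous proposition identifies $X$ as a global minimizer, since its objective value $\frac12\mathrm{tr}(\bar D)-\frac1{4\beta}\mathrm{tr}(\bar D^2)$ matches the global minimum. This covers ties among eigenvalues. The main obstacle I expect is Case~1, specifically verifying cleanly that the cross terms in $\|X^\top J_nX-J_p\|_F^2$ vanish for the chosen perturbation. I would handle this by working in the coordinates $X=S\,Z$ with $S$ the full Williamson matrix. In these coordinates $f_\beta(SZ)=\frac12\langle Z,DZ\rangle+\frac\beta4\|Z^\top J_nZ-J_p\|_F^2$, which reduces both cases to computations with coordinate vectors. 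A secondary subtlety is making sure that an unused index $j$ with $d_j<\beta$ exists in Case~1; this follows from $\beta>d_p$ and pigeonhole, and it is exactly where the hypothesis $\beta>d_p$ is used.
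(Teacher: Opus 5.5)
Your proof is correct, but it follows a different route from the paper's. The paper does no case split. It inserts the single direction $Y=\bar S$ (a symplectic basis of the target eigenspace) into \eqref{eq:hess} and asserts $\nabla^2 f_\beta(X)(\bar S,\bar S)=\mathrm{tr}(\bar D)-2\,\mathrm{tr}(\hat D)-\beta(p-q)<0$ at every non-global stationary point. That computation drops the term $\frac{\beta}{2}\|\bar S^\top J_nX+X^\top J_n\bar S\|_F^2$, which vanishes only when $X$ uses none of the eigenpairs in $\bar S$.

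When $X$ and $\bar S$ do share eigenpairs, this term can dominate. Take $p=2$, $T=I$, and $d_1\le d_2<d_3<\beta$. Let $X$ be built on the eigenpairs of $d_1,d_3$ and $\bar S$ on those of $d_1,d_2$, with the $d_1$-pair in the same columns. Then the dropped term equals $4(\beta-d_1)$, and the curvature along $\bar S$ is $2(d_2-d_3)+4(\beta-d_1)>0$.

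You perturb only with eigenpairs that $X$ does not use, one pair at a time. This makes every $J_n$- and $A$-cross term vanish. Your expansions $t^2(d_j-\beta)+\frac{\beta}{2}t^4$ and $(1-d_j/\beta)\sin^2\theta\,(d_i-d_j)$ are therefore exact, they cover the overlapping case, and they isolate exactly where $\beta>d_p$ is used.

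Because $X$ is stationary, the second derivative along your rotation curve equals the Hessian along its tangent. So both of your cases give genuine negative-curvature directions, with values $2(d_j-\beta)$ and $2(1-d_j/\beta)(d_i-d_j)$. That is what Corollary~\ref{coro:sec-glo} needs.

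One step you should make explicit is that the columns of $\hat S$ can be taken to be columns of a full Williamson matrix $S$. Without this, ``unused pair'' is ill-defined when symplectic eigenvalues repeat. From $A\hat S=J_n\hat S J_q^\top(\hat D\oplus\hat D)$ you get $\hat S^\top Av=0$ whenever $\hat S^\top J_n v=0$. Applying Williamson's theorem on this $J_n$-orthogonal complement then completes $\hat S$ to a symplectic diagonalizer of $A$.
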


\begin{proof}
The directional Hessian at any stationary point $X \in \mathbb{R}^{2n \times 2p}$ is given by:
\begin{equation}
\label{eq:hess}
\begin{aligned}
\nabla^2 f_\beta(X)(Y, Y) = & \text{tr}\left(Y^\top A Y\right) - \beta \, \text{tr}\left(\left(Y^\top J_n Y\right)\left(X^\top J_n X - J_p\right)\right) \\
& + \frac{\beta}{2}\left\|Y^\top J_n X + X^\top J_n Y\right\|_F^2.
\end{aligned}
\end{equation}

By Proposition~\ref{prop:stpform}, assume $X$ is not a global minimizer and has the form
\begin{equation*}
X := \begin{bmatrix}
\hat{S}_1\left(I_q - \frac{\hat{D}}{\beta}\right)^{1/2} & 0_{2n \times (p-q)} & \hat{S}_2\left(I_q - \frac{\hat{D}}{\beta}\right)^{1/2} & 0_{2n \times (p-q)}
\end{bmatrix} T^\top,
\end{equation*}
where $\hat{S} := \begin{bmatrix} \hat{S}_1 & \hat{S}_2 \end{bmatrix} \in \text{Sp}(2q, 2n)$ diagonalizes $A$ as $\hat{S}^\top A \hat{S} = \hat{D}\oplus\hat{D}$.

Let $\bar{S} \in \text{Sp}(2p, 2n)$ be such that $\bar{S}^\top A \bar{S} = \bar{D}$, where
\begin{equation*}
\bar{D} := \text{Diag}(d_1, \dots, d_p, d_1, \dots, d_p).
\end{equation*}
Then, the directional Hessian along $\bar{S}$ satisfies
\begin{equation*}
\nabla^2 f_\beta(X)\left(\bar{S}, \bar{S}\right) = \text{tr}(\bar{D}) - 2\text{tr}(\hat{D}) - \beta (p-q) < 0.
\end{equation*}

This implies that $X$ is either a saddle point or a local maximizer. Since no local minimizers exist when $\beta > d_p$, the proof is complete.
\end{proof}

Theorem \ref{thm:nolocal} directly implies the following Corollary \ref{coro:sec-glo}, which guarantees that the process of finding a second-order necessary stationary point also yields a global minimizer.

\begin{corollary}\label{coro:sec-glo}
Suppose $\beta > d_p$ and $X$ is a second-order necessary stationary point of \eqref{Prob_Pen}, then  $X$ is a global minimizer of \eqref{Prob_Ori}. 
\end{corollary}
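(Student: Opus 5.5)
The corollary follows from Theorem~\ref{thm:nolocal} once one is careful about what ``global minimizer of \eqref{Prob_Ori}'' means for a point of the penalty problem. I read it in the sense of the equivalence definition above: $X$ is a global minimizer of \eqref{Prob_Pen}, hence of the form $\bar{S}(I_{2p}-\bar{D}/\beta)^{1/2}T^\top$, and the symplectic factor $\bar{S}T^\top$ (equivalently the image $\mathcal{T}(X)$ from Proposition~\ref{prop:corr}) is a global minimizer of \eqref{Prob_Ori}.

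\textbf{Step 1.} Let $X$ be a second-order necessary stationary point, so $\nabla f_\beta(X)=0$ and $\nabla^2 f_\beta(X)\succeq 0$. By Proposition~\ref{prop:stpform}, $X$ admits the decomposition with some $q\le p$, some $\hat S$ made of $q$ symplectic eigenvector pairs, and diagonal $\hat D$.

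\textbf{Step 2.} Suppose $X$ is not a global minimizer of \eqref{Prob_Pen}. Take the test direction $Y=\bar{S}$ used in the proof of Theorem~\ref{thm:nolocal} and evaluate \eqref{eq:hess}. That proof shows $\nabla^2 f_\beta(X)(\bar S,\bar S)=\tr(\bar D)-2\tr(\hat D)-\beta(p-q)<0$, which contradicts $\nabla^2 f_\beta(X)\succeq0$. Hence $X$ is a global minimizer of \eqref{Prob_Pen}.

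\textbf{Step 3.} Since $\beta>d_p$, the equivalence proposition preceding Theorem~\ref{thm:nolocal} applies, and $X=\bar S(I_{2p}-\bar D/\beta)^{1/2}T^\top$ with $\bar S^\top A\bar S=\bar D$ and $T\in\mathrm{OrSp}(2p)$.

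\textbf{Step 4.} Set $\tilde S:=\bar S T^\top$. Then:
\begin{itemize}
\item $\tilde S$ is symplectic: $T^\top J_pT=J_p$ gives $\tilde S^\top J_n\tilde S=TJ_pT^\top=J_p$.
\item Its objective value is $\tfrac12\tr(\tilde S^\top A\tilde S)=\tfrac12\tr(\bar D)=\sum_{i\le p}d_i$. By the trace minimization theorem this equals the minimum of \eqref{Prob_Ori}.
\item Its column space equals that of $X$, because $(I_{2p}-\bar D/\beta)^{1/2}$ is invertible when $\beta>d_p$. So $X$ spans the optimal symplectic eigenspace, and $\mathrm{SRR}(X)$ recovers the global minimizer $\tilde S$.
\end{itemize}

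\textbf{Main obstacle.} The only delicate point is the Hessian inequality in Step 2, namely showing $\tr(\bar D)-2\tr(\hat D)-\beta(p-q)<0$ whenever $X$ is not global. I would take this directly from the proof of Theorem~\ref{thm:nolocal}. If a check is needed, one computes the three terms of \eqref{eq:hess} at $Y=\bar S$ using $X^\top J_nX-J_p=-T(\hat D/\beta\oplus I_{p-q}\oplus\hat D/\beta\oplus I_{p-q})J_pT^\top$. Strictness then comes from either $q<p$ or $\hat D\ne\bar D$ with $\hat D$ entrywise dominating, both of which hold when $X$ is not global.
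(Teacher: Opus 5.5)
Your reading of the conclusion is fine: $X$ spans the optimal symplectic eigenspace, and $\bar S T^\top$ is a global minimizer of \eqref{Prob_Ori}. Steps 1, 3 and 4 are also fine, and your route is the paper's, which reads the corollary off Theorem~\ref{thm:nolocal}. The gap is Step 2, and it is the same flaw as in the paper's proof of that theorem.

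The value $\tr(\bar D)-2\tr(\hat D)-\beta(p-q)$ drops the third term of \eqref{eq:hess}, $\frac{\beta}{2}\|\bar S^\top J_nX+X^\top J_n\bar S\|_F^2\ge 0$. That term need not vanish once $X$ already uses some of the $p$ smallest eigenpairs. (The second term is also actually $-2\tr(\hat D)-2\beta(p-q)$.) Concretely, take $p=2$, $q=1$, $T=I_4$, $\lambda=1-d_1/\beta$, $X=[\sqrt{\lambda}S_1,\ 0,\ \sqrt{\lambda}S_{n+1},\ 0]$ and $\bar S=[S_1,S_2,S_{n+1},S_{n+2}]$, with $S_i$ the columns of the Williamson matrix. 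This $X$ is a rank-two stationary point, hence not a global minimizer. Yet $f_\beta(X+t\bar S)$ separates over the two pairs, and its second derivative at $t=0$ is $4(\beta-d_1)+2(d_2-\beta)=2(\beta-d_1)+2(d_2-d_1)>0$. The already occupied $d_1$ pair contributes positive curvature that outweighs the negative curvature of the new pair. So $\bar S$ gives no contradiction, and the check in your last paragraph would expose this rather than confirm it.

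What is missing is a test direction that is symplectically orthogonal to $X$. Write $\hat D=\Diag(\hat d_1,\dots,\hat d_q)$ and $V^\perp:=\{w: w^\top J_n\hat S=0\}$. Since $\mathrm{span}(\hat S)$ is invariant under the Hamiltonian matrix $J_nA$, so is $V^\perp$, and $A$ restricted to $V^\perp$ carries the remaining $n-q$ symplectic eigenvalues.

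Take $u,v\in V^\perp$ with $u^\top J_nv=1$ and $A[u,v]=dJ_n[u,v]J_1^\top$. Place $u,v$ in columns $j$ and $p+j$ of an otherwise zero $Y_0\in\mathbb{R}^{2n\times 2p}$, and set $Y=Y_0T^\top$. Then $Y^\top J_nX=0$, so the third term vanishes. Using $T\in\mathrm{OrSp}(2p)$, \eqref{eq:hess} gives $\nabla^2 f_\beta(X)(Y,Y)=2d-2\hat d_j$ if $j\le q$ and $2d-2\beta$ if $j>q$.

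\begin{itemize}
\item If $q<p$, choose $j=q+1$ and $d$ the smallest symplectic eigenvalue on $V^\perp$, so $d\le d_{q+1}\le d_p<\beta$.
\item If $q=p$ but $\{\hat d_j\}\neq\{d_1,\dots,d_p\}$ as multisets, then some $\hat d_j$ strictly exceeds that smallest eigenvalue.
\end{itemize}

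Either way $\nabla^2 f_\beta(X)\not\succeq 0$. Hence a second-order stationary point has $q=p$ and $\hat D=\Diag(d_1,\dots,d_p)$, up to a permutation that can be absorbed into $T$. Your Step 3 then follows from Proposition~\ref{prop:stpform} alone, and Step 4 finishes the proof.
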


\section{Algorithmic Framework}
\label{sec:alg}

\subsection{Gradient Methods for the Penalty Problem}
Problem \eqref{Prob_Pen} is an unconstrained nonconvex minimization problem, solvable by established methods such as steepest descent, conjugate gradient, Newton, and quasi-Newton methods. Due to the computational cost of eigenvalue computations, we focus on gradient-type methods of the form 
\begin{equation}
\label{eq:iter}
X^{(k+1)} := X^{(k)} - \gamma^{(k)}\nabla f_\beta\left(X^{(k)}\right),
\end{equation}
where $\gamma^{(k)}$ is the step-size at iteration $k$.

The equivalence between \eqref{Prob_Pen} and \eqref{Prob_Ori} ensures that a variety of unconstrained optimization methods can solve \eqref{Prob_Ori} by addressing \eqref{Prob_Pen}. Examples include the gradient descent method, conjugate gradient method, Newton's method, and quasi-Newton methods. Considering the computational cost of solving the symplectic eigenvalue problem, this section focuses on gradient-type methods. Additionally, under random initialization, gradient-type methods can almost surely find second-order stationary points of unconstrained optimization problems. Thus, by Corollary \ref{coro:sec-glo}, this guarantees that we obtain a global minimizer of \eqref{Prob_Pen}.

In this section, we consider using the gradient method with BB step-size to solve \eqref{Prob_Pen} and prove the corresponding convergence properties.

While $f_\beta$ may have multiple stationary points, Theorem \ref{thm:nolocal} shows that all stationary points except global minimizers are saddle points or maximizers, regardless of rank. This differs from penalty methods for standard eigenvalue problems \cite{Wen2016TracePenalty}, where preserving full rank of iterates is critical. Nevertheless, we derive the following result by analogy to \cite{Wen2016TracePenalty}.

\begin{proposition}\label{prop:rank-deficient}
Let $X^{(k+1)}$ be generated by Algorithm \ref{alg:BB} from a full-rank iterate $X^{(k)}$. Then $X^{(k+1)}$ is rank-deficient if and only if $1/\gamma^{(k)}$ is an eigenvalue of the generalized symmetric eigenvalue problem
\begin{equation*}
\left({X^{(k)}}\tp\nabla f_\beta\left(X^{(k)}\right)\right)u = \lambda \left({X^{(k)}}\tp X^{(k)}\right)u.
\end{equation*}
In particular, if $\gamma^{(k)}<\sigma_{\text{min}}\left(X^{(k)}\right)/\|\nabla f_\beta\left(X^{(k)}\right)\|_2$, then $X^{(k+1)}$ remains full-rank.
\end{proposition}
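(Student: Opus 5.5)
The plan is to use the fact that one gradient step is a linear update of $X^{(k)}$, and to reduce the rank question to the generalized eigenvalue problem by projecting onto the column space of $X^{(k)}$. Write $X := X^{(k)}$, $G := \nabla f_\beta(X)$ and $\gamma := \gamma^{(k)}$, so that $X^{(k+1)} = X - \gamma G$.

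\textbf{Step 1 (structure of the gradient).} From the expression for $\nabla f_\beta$ used in Propositions~\ref{prop:SPD-skew-Ham} and~\ref{prop:corr2}, the product $X\tp G = X\tp A X - \beta X\tp J_n X (X\tp J_n X - J_p)$ is symmetric. Indeed, write $K := X\tp J_n X$, which is skew-symmetric. Then $-\beta K(K - J_p) = -\beta K^2 + \beta K J_p$. The term $K^2$ is symmetric, so it remains to handle $K J_p$. In Proposition~\ref{prop:SPD-skew-Ham} the middle term appears as $\beta J_n X J_p K\tp$. Multiplying it on the left by $X\tp$ gives $\beta K J_p K\tp$, which is symmetric, and the last term becomes $\beta K J_p$. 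I will reconcile the two forms of the gradient and check symmetry of $X\tp G$ carefully. If exact symmetry fails, I will argue instead with the (possibly nonsymmetric) pencil $(X\tp G, X\tp X)$, since the rank criterion below only needs invertibility.

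\textbf{Step 2 (if direction).} Suppose $1/\gamma$ is an eigenvalue with eigenvector $u \neq 0$, so that $X\tp G u = \gamma^{-1} X\tp X u$. Then $X\tp (X - \gamma G) u = 0$. This alone does not force $(X - \gamma G)u = 0$, because $G u$ may have a component orthogonal to $\mathrm{range}(X)$. I expect this to be the main obstacle. Following \cite{Wen2016TracePenalty}, I would try to show that the component of $G$ orthogonal to $\mathrm{range}(X)$ vanishes on the relevant vectors, or else settle for the weaker, correct direction. The weaker direction is: $X^{(k+1)}$ is rank-deficient only if there is $u \neq 0$ with $X u = \gamma G u$; multiplying by $X\tp$ then gives $X\tp X u = \gamma X\tp G u$, i.e.\ $1/\gamma$ is a generalized eigenvalue. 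I will present this necessity argument rigorously, and treat the converse in the same way as \cite{Wen2016TracePenalty}.

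\textbf{Step 3 (sufficient condition).} If $u \neq 0$ satisfies $(X - \gamma G)u = 0$, then
\[
\sigma_{\min}(X)\|u\|_2 \le \|Xu\|_2 = \gamma\|Gu\|_2 \le \gamma \|G\|_2\|u\|_2 .
\]
Hence $\gamma \ge \sigma_{\min}(X)/\|G\|_2$. Contrapositively, $\gamma < \sigma_{\min}(X)/\|G\|_2$ forces $X^{(k+1)}$ to have full column rank. This step uses only the direct kernel argument, not Step 2's converse.
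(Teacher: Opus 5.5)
Your necessity argument (take $u\neq 0$ in the kernel of $X-\gamma G$ and left-multiply by $X\tp$) is exactly the paper's proof. Your Step~3 is a correct, self-contained proof of the ``in particular'' clause, which the paper never writes out. The two points you left open cannot be settled the way you hoped, because both claims are false in general. You should resolve them explicitly rather than defer them.

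\textbf{Symmetry.} Let $K:=X\tp J_nX$. The middle term of \eqref{eq:grad-fbeta-X-0} is $\beta J_nXJ_p(KJ_p)\tp$, not $\beta J_nXJ_pK\tp$. Left-multiplying it by $X\tp$ gives $\beta(KJ_p)(KJ_p)\tp=-\beta K^2$. Your expression $\beta KJ_pK\tp$ is in fact skew-symmetric. So the two gradient forms agree, and $X\tp G=X\tp AX-\beta K^2+\beta KJ_p$. Since $(KJ_p)\tp=J_pK$, this matrix is symmetric if and only if $K$ commutes with $J_p$. That holds automatically for $p=1$, where $K$ is a multiple of $J_1$, but fails for generic $X$ once $p\ge 2$. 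In \cite{Wen2016TracePenalty} the analogous commutation, of $X\tp X$ with $X\tp X-I_p$, is automatic; that is why the pencil there is symmetric. The paper's proof gets symmetry by citing \eqref{eq:grad-fbeta-X-0}, but that is the stationarity condition, so it only applies where $X\tp G=0$. Your fallback to a possibly nonsymmetric pencil is therefore the right one. Neither the necessity direction nor Step~3 uses symmetry.

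\textbf{Converse.} There is nothing to import from \cite{Wen2016TracePenalty}: the argument there, like the paper's proof here, yields only the ``only if'' direction. For $n>p$ the ``if'' direction is false. Take $n=2$, $p=1$,
\[
A=\begin{bmatrix}2&1\\1&2\end{bmatrix}\oplus I_2,\qquad X=[e_1,\ e_3].
\]
Then $X$ is symplectic, so $G=AX$, $X\tp X=I_2$ and $X\tp G=\Diag(2,1)$. For $\gamma=1/2$, the value $1/\gamma=2$ is a generalized eigenvalue. Yet $X-\gamma G=[-\tfrac{1}{2}e_2,\ \tfrac{1}{2}e_3]$ has full rank. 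This step is even accepted by \eqref{eq:GLL} at $k=0$ with $t=0$, for example with $\beta=2>d_1=1$ and $\lambda\le 1/24$.

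The obstruction is exactly the one you spotted: $X\tp(X-\gamma G)u=0$ only annihilates the component of $(X-\gamma G)u$ lying in $\mathrm{range}(X)$. The equivalence does hold when $n=p$, since then $X\tp$ is invertible. So what can actually be proved is your necessity direction, for a possibly nonsymmetric pencil, together with Step~3.
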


\begin{proof}
Assume $X^{(k)}$ is full-rank and $X^{(k+1)}$ is rank-deficient. By definition, there exists a nonzero $u$ such that
\begin{equation*}
0 = {X^{(k)}}\tp X^{(k+1)}u = {X^{(k)}}\tp X^{(k)}u - \gamma^{(k)}{X^{(k)}}\tp\nabla f_\beta\left(X^{(k)}\right)u.
\end{equation*}
Rearranging confirms $1/\gamma^{(k)}$ satisfies the generalized eigenvalue problem, as \linebreak ${X^{(k)}}\tp\nabla f_\beta(X^{(k)})$ is symmetric (by \eqref{eq:grad-fbeta-X-0}).
\end{proof}

Algorithm \ref{alg:BB} presents a non-monotonic gradient method for \eqref{Prob_Pen}, using the Barzilai-Borwein (BB) step-size and a Grippo-Lucidi (GLL)-type line search to guarantee global convergence. Specifically, set $S^{(k-1)}:=X^{(k)}-X^{(k-1)}$ and $Z^{(k-1)}:=\nabla f_\beta\ssp{X^{(k)}}-\nabla f_\beta\ssp{X^{(k-1)}}$, then the alternative BB step-size is
\begin{equation*}
\gamma^{(k)}_\text{BB} :=
\begin{cases}
\frac{\inner{S^{(k-1)},S^{(k-1)}}}{\left|\inner{S^{(k-1)},Z^{(k-1)}}\right|}\quad \text{for even }k\\
\frac{\left|\inner{S^{(k-1)},Z^{(k-1)}}\right|}{\inner{Z^{(k-1)},Z^{(k-1)}}}\quad \text{for odd }k
\end{cases}
,\quad m\in\bb{N}.
\end{equation*}

\begin{algorithm}[t]
\caption{Symplectic Eigenspace by Penalty – Basic Version (SympEigPen-B)}\label{alg:BB}
\begin{algorithmic}[1]
\Require{Symmetric positive definite matrix $A\in\mathbb{R}^{2n\times 2n}$, initial iterate $X^{(0)}\in\mathbb{R}^{2n\times 2p}$, penalty parameter $\beta>0$, step size bounds $\bar{\gamma}\geq\gamma^{(0)}\geq\underline{\gamma}>0$, maximum iterations $k_{\text{max}}\in\mathbb{N}$, tolerance $\epsilon>0$, line search parameters $0<\delta,\lambda <1$, and memory length $L\in\mathbb{N}$.}
\Ensure{Approximate solution $X^{(k)}$.}
\For{$k=0,1,\dots,k_{\text{max}}$}
\State Compute $G^{(k)}:=\nabla f_\beta\left(X^{(k)}\right)$.
\If{$\left\|G^{(k)}\right\|_F<\epsilon$}
\State $\textbf{break}$
\EndIf
\If{$k>0$}
\State Compute $\gamma^{(k)}:=\max\left\{\underline{\gamma},\min\left\{\bar{\gamma},\gamma^{(k)}_\text{BB}\right\}\right\}$.
\EndIf
\State Find the smallest integer $t\geq 0$ satisfying
\begin{equation}\label{eq:GLL}
f_\beta\left(X^{(k)}-\delta^t\gamma^{(k)}G^{(k)}\right)\leq \max\limits_{0\leq l\leq \min(k,L)}f_\beta\left(X^{(k-l)}\right) - \lambda\cdot\delta^t\gamma^{(k)}\|G^{(k)}\|_F^2
\end{equation}
\State Update $X^{(k+1)}:=X^{(k)}-\delta^t\gamma^{(k)}G^{(k)}$
\EndFor
\end{algorithmic}
\end{algorithm}

The global convergence of Algorithm \ref{alg:BB} is established by the following theorem, addressing the limiting case without termination.

\begin{theorem}[Global convergence of Algorithm \ref{alg:BB}]
\label{thm:BB-convergence}
Let $G^{(k)}$ be generated by Algorithm \ref{alg:BB}, then $\lim\limits_{k\rightarrow\infty} \|G^{(k)}\|_F = 0$.
\end{theorem}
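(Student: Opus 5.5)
The plan is to run the standard Grippo--Lucidi (GLL) nonmonotone line-search argument, in the form used by Zhang--Hager and by Wen--Yin for the standard trace-penalty model \cite{Wen2016TracePenalty}. There are three ingredients: a compact sublevel set, uniform Lipschitz continuity of $\nabla f_\beta$ on a neighbourhood of that set, and a uniform lower bound on the accepted step sizes.

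\emph{Step 1: bounded iterates.} Condition \eqref{eq:GLL} gives $f_\beta(X^{(k+1)}) \le \max_{0\le l\le \min(k,L)} f_\beta(X^{(k-l)})$. By induction, every iterate therefore lies in the sublevel set
\begin{equation*}
\mathcal{L}_0 := \{X : f_\beta(X)\le f_\beta(X^{(0)})\}.
\end{equation*}
Because $A$ is SPD, $f_\beta(X)\ge \frac12\lambda_{\min}(A)\|X\|_F^2$, so $f_\beta$ is coercive and $\mathcal{L}_0$ is compact. This is where the non-compactness of $\mathrm{Sp}(2p,2n)$ stops mattering: the quadratic term $\langle X,AX\rangle$ alone bounds the iterates. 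Consequently $\|G^{(k)}\|_F\le M$ for all $k$. Since $\gamma^{(k)}\le\bar\gamma$, every trial point $X^{(k)}-\delta^t\gamma^{(k)}G^{(k)}$ lies in the compact set $\mathcal{L}_1:=\mathcal{L}_0+\bar\gamma M\,\mathbb{B}$. The function $f_\beta$ is a quartic polynomial, so $\nabla f_\beta$ is Lipschitz on the convex hull of $\mathcal{L}_1$ with some constant $L_f$.

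\emph{Step 2: well-posedness and a lower bound on the step.} By the descent lemma,
\begin{equation*}
f_\beta(X^{(k)}-\alpha G^{(k)})\le f_\beta(X^{(k)})-\alpha(1-L_f\alpha/2)\|G^{(k)}\|_F^2 .
\end{equation*}
Hence \eqref{eq:GLL} holds, even with $f_\beta(X^{(k)})$ on the right in place of the max, whenever $\alpha\le 2(1-\lambda)/L_f$. The backtracking loop therefore terminates, and the accepted step satisfies $\alpha_k:=\delta^t\gamma^{(k)}\ge \alpha_{\min}:=\min\{\underline{\gamma},\,2\delta(1-\lambda)/L_f\}>0$.

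\emph{Step 3: the GLL argument.} Set $\ell(k)$ to be the index achieving $\max_{0\le l\le\min(k,L)} f_\beta(X^{(k-l)})$. The sequence $f_\beta(X^{(\ell(k))})$ is nonincreasing and bounded below by $0$, so it converges to some $f^*$. Applying \eqref{eq:GLL} at index $\ell(k)-1$ gives
\begin{equation*}
f_\beta(X^{(\ell(k))})\le f_\beta(X^{(\ell(\ell(k)-1))})-\lambda\alpha_{\min}\|G^{(\ell(k)-1)}\|_F^2,
\end{equation*}
so $\|G^{(\ell(k)-1)}\|_F\to0$. Since $\|X^{(\ell(k))}-X^{(\ell(k)-1)}\|_F\le\bar\gamma\|G^{(\ell(k)-1)}\|_F\to0$, uniform continuity of $f_\beta$ on $\mathcal{L}_1$ yields $f_\beta(X^{(\ell(k)-1)})\to f^*$. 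Inducting on $j=1,\dots,L+1$ exactly as in Grippo--Lampariello--Lucidi, we get $\|G^{(\ell(k)-j)}\|_F\to0$ and $f_\beta(X^{(\ell(k)-j)})\to f^*$. Because $k-\ell(k)\le L$, every index $k$ has the form $\ell(k')-j$ with $k'=k+L+1$ and $1\le j\le L+1$. Hence $\|G^{(k)}\|_F\to0$.

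The main obstacle is the bookkeeping in Step 3, namely carrying the induction over the window of length $L+1$ while using only uniform continuity on the compact set. Steps 1 and 2 are routine once coercivity is observed. No rank or symplecticity property of the iterates is needed, since the argument treats $f_\beta$ purely as a smooth coercive function.
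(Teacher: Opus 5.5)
Your proof is correct, and it takes a different and more careful route than the paper's. The paper asserts a global bound $\|\nabla^2 f_\beta\|_F\le C$ and uses it to bound the number of backtracks by $\bar{t}$. It then claims a per-iteration decrease $f_\beta(X^{(k)})-f_\beta(X^{(k+1)})\ge\delta^{\bar{t}}\underline{\gamma}\lambda\|G^{(k)}\|_F^2$ and sums it telescopically. Both steps fail as written. First, $f_\beta$ is quartic, so its Hessian grows like $\beta\|X\|_F^2$ and has no global bound. Second, \eqref{eq:GLL} only compares $f_\beta(X^{(k+1)})$ with the maximum over the memory window. The accepted step may therefore be $t=0$ with $\gamma^{(k)}$ as large as $\bar{\gamma}$, the factor $1-C\delta^{t}\gamma^{(k)}$ need not be $\ge\lambda$, and $f_\beta(X^{(k+1)})>f_\beta(X^{(k)})$ can occur. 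Consecutive differences then cannot be telescoped.

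Your argument repairs each of these points:
\begin{itemize}
\item Step 1 replaces the global Hessian bound by compactness of $\{f_\beta\le f_\beta(X^{(0)})\}$. You obtain this from $A\succ0$ and the observation that the nonmonotone rule never lets an iterate exceed $f_\beta(X^{(0)})$.
\item Step 2 gives the lower bound $\alpha_k\ge\min\{\underline{\gamma},2\delta(1-\lambda)/L_f\}$ on accepted steps, which is the quantity the paper's $\bar{t}$ is really meant to control.
\item Step 3 is the Grippo--Lampariello--Lucidi window induction, which is what the nonmonotone rule requires in place of telescoping.
\end{itemize}
The paper's shorter argument would suffice only in the monotone case $L=0$, where \eqref{eq:GLL} itself yields $f_\beta(X^{(k)})-f_\beta(X^{(k+1)})\ge\lambda\alpha_k\|G^{(k)}\|_F^2$. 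Even then it needs a restriction to a bounded set, which your coercivity step supplies. Your version covers Algorithm~\ref{alg:BB} as stated.
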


\begin{proof}
There exists $C>0$ (dependent on $\beta$) such that $\|\nabla^2 f_\beta\|_F \leq C$. Using this Lipschitz bound on the Hessian we yield
\begin{equation*}
\begin{aligned}
f_\beta\left(X^{(k)}-\delta^t\gamma^{(k)}G^{(k)}\right) &\leq f_\beta\left(X^{(k)}\right) - \delta^t\gamma^{(k)}\|G^{(k)}\|_F^2 + C\delta^{2t}{\gamma^{(k)}}^2\|G^{(k)}\|_F^2 \label{leq:descent}\\
&\leq \max\limits_{0\leq l\leq L}f_\beta\left(X^{(k-l)}\right) - \left(\delta^t\gamma^{(k)}-C\delta^{2t}{\gamma^{(k)}}^2\right)\|G^{(k)}\|_F^2\\
&= \max\limits_{0\leq l\leq L}f_\beta\left(X^{(k-l)}\right) - \delta^t\gamma^{(k)}\left(1-C\delta^{t}\gamma^{(k)}\right)\|G^{(k)}\|_F^2.
\end{aligned}
\end{equation*}
A sufficient condition for \eqref{eq:GLL} is $1-C\delta^{t}\gamma^{(k)}\geq\lambda$, or $t\leq\log_\delta\frac{1-\lambda}{C\gamma^{(k)}}$.

Since $\bar{\gamma}\geq\gamma^{(k)}\geq\underline{\gamma}>0$, the line search parameter $t^{(k)}$ at iteration $k$ has a uniform upper bound:
\begin{equation}\label{eq:tk}
t^{(k)}\leq \log_\delta\frac{1-\lambda}{C\bar{\gamma}} := \bar{t}.
\end{equation}
As $f_\beta$ is bounded below, 

By \eqref{leq:descent}, we obtain
\begin{equation*}
\begin{aligned}
f_\beta\ssp{X^{(k)}} - f_\beta\ssp{X^{(k+1)}} &\geq \delta^{t^{(k)}}\gamma^{(k)}\left(1-C\delta^{t^{(k)}}\gamma^{(k)}\right)\|G^{(k)}\|_F^2\\
&\geq \delta^{\bar{t}}\underline{\gamma}\lambda\|G^{(k)}\|_F^2.
\end{aligned}
\end{equation*}
Let $k=0,1,\dots$ and summing up, we obtain
\begin{equation*}
\sum_{k=0}^\infty\delta^{\bar{t}}\underline{\gamma}\lambda\|G^{(k)}\|_F^2 < +\infty,
\end{equation*}
hence $\lim\limits_{k\rightarrow\infty} \|G^{(k)}\|_F = 0$.
\end{proof}

\subsection{Enhancement by Restarting and Randomized Step-Size}
Algorithm \ref{alg:BB} exhibits typical gradient method behavior, beginning with rapid objective reduction initially, followed by slowdown near minimizers. Restarting with a modified iterate accelerates convergence, as shown for \eqref{Prob_Pen}. We introduce a restarting strategy with multiple symplectic Rayleigh–Ritz (SRR) steps, analogous to \cite{Wen2016TracePenalty} (which uses conventional Rayleigh–Ritz steps).

Proposition \ref{prop:rank-deficient} demonstrates that rank deficiency arises only for specific discrete $\gamma^{(k)}$. Introducing randomness into step-size updates makes rank deficiency occur with probability zero.
Therefore, integrating these enhancements yields Algorithm \ref{alg:BB-enh}, an improved version of Algorithm \ref{alg:BB}.
Convergence of Algorithm \ref{alg:BB-enh} follows from Theorem \ref{thm:BB-convergence}, as $\lim\limits_{i \rightarrow \infty} \varepsilon_i = 0$, formalized below.

\begin{algorithm}[t]
\caption{Symplectic Eigenspace by Penalty – Enhanced Version (SympEigPen)}\label{alg:BB-enh}
\begin{algorithmic}[1]
\Require{Symmetric positive definite matrix $A \in \mathbb{R}^{2n \times 2n}$.}
\Ensure{Approximate solution $\bar{X}^{(i)}$.}
\State Initialize $\bar{X}^{(0)} \in \mathbb{R}^{2n \times 2p}$, $\beta > 0$, initial step size and bounds $\bar{\gamma} \geq \gamma^{(0)} \geq \underline{\gamma} > 0$, randomization bounds $\bar{\xi} > \underline{\xi} > 0$, maximum inner iterations $k_{\text{max}} \in \mathbb{N}$, tolerance reduction factor $0 < \delta_\varepsilon < 1$, initial tolerance $\epsilon_0 > 0$, line search parameters $0 < \delta, \lambda < 1$, memory length $L \in \mathbb{N}$, factor $\eta > 0$ for updating $\beta$ and set $i = k := 0$.
\While{convergence criterion not met}
\State Set $X^{(k)} := \bar{X}^{(i)}$.
\For{$k = 0, 1, \dots, k_{\text{max}}$}
\State Compute $G^{(k)} := \nabla f_\beta\left(X^{(k)}\right)$.
\If{$\|G^{(k)}\|_F < \varepsilon_i \cdot \max\left\{1, \|A X^{(k)}\|_F\right\}$}
\State \textbf{break}
\EndIf
\If{$k > 0$}
\State Draw $\xi^{(k)} \sim \mathcal{U}[\underline{\xi}, \bar{\xi}]$, and compute $\gamma^{(k)} := \xi^{(k)} \cdot \max\left\{\underline{\gamma}, \min\left\{\bar{\gamma}, \gamma^{(k)}_\text{BB}\right\}\right\}$.
\EndIf
\State Find the smallest integer $t \geq 0$ satisfying the line search condition
\begin{equation*}
f_\beta\left(X^{(k)} - \delta^t \gamma^{(k)} G^{(k)}\right) \leq \max_{0 \leq l \leq L} f_\beta\left(X^{(k-l)}\right) - \lambda \cdot \delta^t \gamma^{(k)} \|G^{(k)}\|_F^2
\end{equation*}
\State Update $X^{(k+1)} := X^{(k)} - \delta^t \gamma^{(k)} G^{(k)}$.
\EndFor
\State Update $\beta := \eta\cdot\max\limits_iD_{ii}$.
\State Execute the SRR procedure $(S, D) = \mathrm{SRR}\left(X^{(k)}\right)$ and set $\bar{X}^{(i+1)} := S \left(1 - D / \beta\right)^{1/2}$.
\State Update tolerance $\varepsilon_{i+1} := \delta_\epsilon \varepsilon_i$ and set $i := i + 1$.
\EndWhile
\end{algorithmic}
\end{algorithm}

\begin{corollary}
Let $\bar{X}^{(i)}$ be generated by Algorithm \ref{alg:BB-enh}, then $\lim\limits_{i \rightarrow \infty} \left\|\nabla f_\beta\left(\bar{X}^{(i)}\right)\right\|_F = 0$.
\end{corollary}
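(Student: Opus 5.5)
The corollary should follow from Theorem~\ref{thm:BB-convergence} applied to each inner loop, combined with the decreasing tolerance sequence $\varepsilon_i = \delta_\varepsilon^i \varepsilon_0 \to 0$.

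The first step is to show that every inner loop of Algorithm~\ref{alg:BB-enh} terminates through the \textbf{break} test rather than by exhausting $k_{\max}$. The inner loop is Algorithm~\ref{alg:BB}, except that the step size is multiplied by the random factor $\xi^{(k)}\in[\underline{\xi},\bar{\xi}]$. The effective step size therefore stays in $[\underline{\xi}\,\underline{\gamma},\,\bar{\xi}\bar{\gamma}]$, a compact interval bounded away from zero. The argument of Theorem~\ref{thm:BB-convergence} uses only two-sided bounds on $\gamma^{(k)}$ and the GLL condition, so it applies verbatim with these new bounds. Hence $\|G^{(k)}\|_F\to 0$ along an unterminated inner loop. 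Since $\varepsilon_i\max\{1,\|AX^{(k)}\|_F\}\ge \varepsilon_i>0$, the break test is eventually met. This requires reading $k_{\max}$ as large enough, or as $\infty$, in the limiting analysis, just as Theorem~\ref{thm:BB-convergence} treats the nonterminating case.

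When the loop breaks at iterate $X^{(k_i)}$, we have $\|\nabla f_\beta(X^{(k_i)})\|_F<\varepsilon_i\max\{1,\|AX^{(k_i)}\|_F\}$. To conclude that the right-hand side tends to zero, I need $\|AX^{(k_i)}\|_F$ to be bounded uniformly in $i$. I would obtain this from the GLL monotonicity $f_\beta(X^{(k)})\le \max_{0\le l\le L} f_\beta(X^{(0)})$ within each loop. Since $f_\beta\ge \frac12\lambda_{\min}(A)\|X\|_F^2$, the iterates lie in a level set of $f_\beta$ determined by the restart point $\bar X^{(i)}$.

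The next step is to show that the restart point $\bar X^{(i+1)}=S(I-D/\beta)^{1/2}$ has bounded objective value. This point is constructed to have the form appearing in Proposition~\ref{prop:stpform}, with $D$ the Ritz symplectic values. Its value is therefore $\frac12\tr(D)-\frac1{4\beta}\tr(D^2)$ plus a term of order $\|(I-D/\beta)\|$ coming from the symplecticity of $S$. With $\beta=\eta\max_i D_{ii}$, this value is bounded by a constant times $\tr(D)$. By the trace minimization argument used in Proposition~\ref{prop:corr2}, $\tr(D)\le 2p\,d_n$. So all iterates remain in a fixed bounded set, and $\|AX^{(k_i)}\|_F\le \|A\|_2 R$ for a uniform radius $R$.

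Finally, the corollary concerns $\bar X^{(i)}$ rather than $X^{(k_i)}$. I would argue that the SRR step maps an $\varepsilon$-stationary point to a point whose gradient is $O(\varepsilon)$. Proposition~\ref{prop:corr2} converts the small gradient into a small residual $\|AS-J_nSL\|_F$, with a constant controlled on the bounded set. After the Rayleigh--Ritz rotation, $\nabla f_\beta(\bar X^{(i+1)})$ is a continuous expression in this residual that vanishes when the residual vanishes. I expect this step to be the main obstacle. The difficulties are the change of $\beta$ between loops, and keeping $\sigma_{\min}(X^\top AX)$ away from zero, which needs the full-rank iterates guaranteed almost surely by the randomized step sizes and Proposition~\ref{prop:rank-deficient}. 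If this proves delicate, I would instead state the limit along the inner terminal iterates $X^{(k_i)}$, for which the bound $\varepsilon_i\max\{1,\|A\|_2R\}\to0$ gives the result directly.
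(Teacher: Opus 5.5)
Your skeleton is the paper's own one-line justification: apply Theorem~\ref{thm:BB-convergence} to each inner loop, then let $\varepsilon_i\to 0$. You are right that this only controls the terminal inner iterates $X^{(k_i)}$. It also tacitly needs a uniform bound on $\|AX^{(k_i)}\|_F$ and a transfer through the SRR restart, with $\beta=\beta_i$ changing.

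\textbf{The uniform bound.} The inequality you use here is false. The trace minimization theorem gives only the lower bound $\tr(S^\top AS)\ge 2\sum_{j\le p}d_j$ on $\mathrm{Sp}(2p,2n)$. Since that manifold is unbounded, there is no upper bound of the form $2p\,d_n$. The first inequality in the proof of Proposition~\ref{prop:corr2} makes the same mistake, so it cannot be cited. For a counterexample, take $n=2$, $p=1$, $A=I_4$, $S=[e_1,\ e_3+te_2]$. Then $S^\top J_2S=J_1$ and $S^\top AS=\Diag(1,1+t^2)$, whose symplectic eigenvalue $\sqrt{1+t^2}$ is unbounded while $d_n=1$. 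So neither your radius $R$ nor, under the update $\beta:=\eta\max_iD_{ii}$, the boundedness of $\beta_i$ is established. For a fixed $\beta$ you would not need this: the restart value is exactly $f_\beta\bigl(S(I-D/\beta)^{1/2}\bigr)=\sum_{j=1}^p\bigl(d_j-d_j^2/(2\beta)\bigr)\le p\beta/2$, with no extra term. The real difficulty is controlling the Ritz values once $\beta$ depends on them.

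\textbf{The transfer step.} This is the step you leave open, and it is where the missing idea sits. A direct computation gives $\nabla f_\beta(\bar X^{(i+1)})=\bigl(AS_{\mathrm{fin}}-J_nS_{\mathrm{fin}}J_p^\top D\bigr)(I-D/\beta)^{1/2}$, which is exactly the symplectic Ritz residual. To bound it, combine three ingredients:
\begin{enumerate}
\item the identity $AS-J_nSL=\nabla f_\beta(X)\,T\Sigma^{-1}$ from the proof of Proposition~\ref{prop:corr2};
\item the passage to the symplectic Rayleigh quotient $J_p^\top S^\top AS$;
\item the rotation by $\hat S$.
\end{enumerate}
The result is $\|\nabla f_\beta(X^{(k_i)})\|_F$ times factors that grow with $\|X^{(k_i)}\|_2$ and $\|\Sigma^{-1}\|_2$, where $\sigma_{\min}(\Sigma)^2=\sigma_{\min}(X^\top J_nX)$.

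What you need is therefore a lower bound on $\sigma_{\min}\bigl({X^{(k_i)}}^\top J_nX^{(k_i)}\bigr)$ that is uniform in $i$. The quantity $\sigma_{\min}(X^\top AX)$ is the wrong one: $X=[e_1,\ e_2+\epsilon e_3]$ has $X^\top X=\Diag(1,1+\epsilon^2)$ but $X^\top J_2X=\epsilon J_1$. Almost-sure full rank does not supply the bound either. It is only qualitative, and Proposition~\ref{prop:rank-deficient} concerns column rank, not invertibility of $X^\top J_nX$. Theorem~\ref{thm:BB-convergence} gives $\|G^{(k)}\|_F\to0$, not convergence to a full-rank point. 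So nothing prevents the terminal iterates from approaching the rank-deficient stationary points ($q<p$) of Proposition~\ref{prop:stpform}. Near those points some $\sigma_j\to 0$, and the Ritz values, $\beta_{i+1}$ and the above constants can all blow up.

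Without this bound the argument does not reach $\bar X^{(i)}$. Your fallback proves a different statement, about $X^{(k_i)}$ rather than $\bar X^{(i)}$. Once $\beta$ is updated, even that version still needs $\beta_i$, i.e.\ the Ritz values, to be controlled.
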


The following proposition demonstrates that randomized step-size strategy guarantees rank preservation in the sense of probability.

\begin{proposition}
Let $\bar{X}^{(i)}$ be generated by Algorithm \ref{alg:BB-enh}. If the algorithm terminates in finitely many iterations, then $\left\{\bar{X}^{(i)}\right\}$ remains full-rank with probability one.
\end{proposition}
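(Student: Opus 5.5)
The plan is to argue by induction over the finitely many iterations actually performed. At each step we show that, conditioned on the current iterate being full-rank, the next iterate is full-rank almost surely. Since there are only countably many (here finitely many) steps, a union bound over null events gives the claim with probability one.

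The base case requires $\bar{X}^{(0)}$ to be full-rank. I would take this as part of the (generic) initialization: a random initialization is full-rank almost surely. The outer restarts $\bar{X}^{(i+1)} := S(1-D/\beta)^{1/2}$ produced by the SRR procedure also preserve full rank. Here $S\in\mathrm{Sp}(2p,2n)$ has full column rank because $S\tp J_n S = J_p$ is invertible. The factor $(1-D/\beta)^{1/2}$ is invertible because the update $\beta := \eta\max_i D_{ii}$ uses $\eta>1$, so every $D_{ii}<\beta$. The SRR step itself is applicable because a full-rank $X^{(k)}$ near stationarity has $X\tp J_n X$ invertible; I would justify this by the Remark after Lemma~\ref{lem:sym-svd}, or simply assume it as the algorithm does.

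For the inner iterations, fix $k\ge1$ and condition on the history $\mathcal{F}_k$, which contains $X^{(k)}$ (full-rank), $G^{(k)}$, and the deterministic quantity $\hat\gamma := \max\{\underline{\gamma},\min\{\bar{\gamma},\gamma^{(k)}_{\text{BB}}\}\}$. By Proposition~\ref{prop:rank-deficient}, a trial point $X^{(k)}-\tau G^{(k)}$ is rank-deficient only if $1/\tau$ lies in the spectrum of the symmetric-definite pencil $({X^{(k)}}\tp G^{(k)}, {X^{(k)}}\tp X^{(k)})$. This pencil is well-defined and $\mathcal{F}_k$-measurable, and it has at most $2p$ eigenvalues $\lambda_1,\dots,\lambda_{2p}$. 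The accepted step is $\tau=\delta^t\xi^{(k)}\hat\gamma$ for some integer $t\in\{0,\dots,\bar t\}$. The finiteness of $t$ follows from the bound \eqref{eq:tk} in the proof of Theorem~\ref{thm:BB-convergence}, since $\xi^{(k)}\hat\gamma\le\bar\xi\bar\gamma$. Hence rank deficiency forces
\[
\xi^{(k)}\in\mathcal{N}_k:=\bigl\{(\delta^t\hat\gamma\lambda_j)^{-1} : 0\le t\le \bar t,\ 1\le j\le 2p,\ \lambda_j>0\bigr\},
\]
a finite set determined by $\mathcal{F}_k$. Because $\xi^{(k)}\sim\mathcal{U}[\underline\xi,\bar\xi]$ is drawn independently of $\mathcal{F}_k$, we get $\Pr(\xi^{(k)}\in\mathcal{N}_k\mid\mathcal{F}_k)=0$. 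Note that $t$ depends on $\xi^{(k)}$, which is why we enumerate all admissible $t$ rather than conditioning on it.

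The step $k=0$ uses the deterministic $\gamma^{(0)}$, so randomness does not cover it. Here I would invoke the second part of Proposition~\ref{prop:rank-deficient}, either assuming $\gamma^{(0)}<\sigma_{\min}(X^{(0)})/\|G^{(0)}\|_2$ or noting that $\gamma^{(0)}$ is generic. This is the main obstacle, and I would state it explicitly as a mild assumption on the initial step. Finally, finite termination means only finitely many inner and outer steps occur. The union of the finitely many null events therefore has probability zero, and all $\bar X^{(i)}$ (indeed all $X^{(k)}$) are full-rank almost surely.
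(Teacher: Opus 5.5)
Your argument is the paper's own: Proposition~\ref{prop:rank-deficient} confines rank deficiency to finitely many step sizes, the continuous draw of $\xi^{(k)}$ makes each bad event null, and finitely many steps are combined. You carry it out more carefully than the paper (enumerating the backtracking exponents $t$, checking that $S(1-D/\beta)^{1/2}$ has full rank, and above all flagging the deterministic $k=0$ step, where no $\xi$ is drawn so the paper's appeal to randomization does not apply and your assumption on $\gamma^{(0)}$ is genuinely needed, e.g.\ $n=p=1$, $A=I_2$, $\beta=2$, $X^{(0)}=2I_2$, $\gamma^{(0)}=1/7$ gives an accepted step to $X^{(1)}=0$), with three small repairs: ${X^{(k)}}\tp G^{(k)}$ is not symmetric away from stationary points, but ${X^{(k)}}\tp X^{(k)}\succ 0$ alone makes the pencil regular with at most $2p$ eigenvalues, which is all you use; the Remark after Lemma~\ref{lem:sym-svd} covers only exact stationary points, so invertibility of ${X^{(k)}}\tp J_n X^{(k)}$ at exit (needed for the SRR step) must be assumed rather than derived; and you need not invoke \eqref{eq:tk}, whose proof relies on a global Hessian bound that the quartic $f_\beta$ does not have, since a countable union over all $t\ge 0$ is still null.
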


\begin{proof}
By Proposition \ref{prop:rank-deficient}, rank deficiency requires $\gamma^{(k)}$ to equal one of finitely many values (reciprocals of eigenvalues from a generalized eigenvalue problem). Randomization ensures the probability of selecting such $\gamma^{(k)}$ is zero at each iteration. Thus,
\[
P_i := \mathbb{P}\left(\bar{X}^{(i+1)} \text{ is full-rank} \mid \bar{X}^{(j)} \text{ is full-rank for all } j \leq i\right) = 1.
\]
Let $i_{\text{max}}$ be the termination iteration. The joint probability of all $\bar{X}^{(j)}$ ($j \leq i_{\text{max}}$) being full-rank is
\[
\mathbb{P}\left(\bar{X}^{(j)} \text{ is full-rank for all } j \leq i_{\text{max}}\right) = \prod_{j=0}^{i_{\text{max}}-1} P_j = 1,
\]
completing the proof.
\end{proof}

\subsection{Penalty Parameter Adjustment}

In penalty function methods, the performance of numerical optimizers is highly sensitive to the choice of the penalty parameter, for which a suitable value is often difficult to determine a priori. When $p > 1$, the global minimizers of $f_\beta$ are non-isolated. To characterize the local convergence rate, we estimate the condition number of the projected Hessian at the global minimizers of \eqref{Prob_Pen} and show that it is independent of the penalty parameter $\beta$. We then examine a specific directional Hessian in the subspace spanned by the global minimizer, whose value depends on $\beta$. Based on this analysis, we derive a theoretical optimum for $\beta$ and propose a practical estimation of this value. Finally, since the proposed \textit{SympEigPen} algorithm incorporates a restart mechanism, we introduce a self-adaptive strategy for adjusting $\beta$ during each restart.

\begin{proposition}
Let $p > 1$, $\beta > d_p$, and let $X$ be a global minimizer of $f_\beta$ in \eqref{Prob_Pen}. The condition number of the Hessian restricted to $\mathrm{span}\ssp{\hat{S}}$ satisfies
\begin{equation*}
\kappa\left(\nabla^2 f_\beta(X)\mid_{\mathrm{span}\ssp{\hat{S}}}\right) \leq \frac{d_n+d_p}{d_{p+1}-d_p} \cdot \frac{d_n}{d_{p+1}}\cdot\kappa(A).
\end{equation*}
where $\hat{S} \in \mathbb{R}^{2n \times (2n-2p)}$ contains symplectic eigenvector pairs for eigenvalues $d_{p+1}, \dots, d_n$, and $\kappa(A)$ is the spectral condition number of $A$.
\end{proposition}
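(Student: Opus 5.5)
We propose to compute the Hessian quadratic form \eqref{eq:hess} at a global minimizer and restrict it to directions lying in the span of the complementary symplectic eigenvectors. By the equivalence proposition, $X=\bar S(I_{2p}-\bar D/\beta)^{1/2}T^\top$. Since $T$ is orthogonal symplectic, replacing $Y$ by $YT$ leaves the quadratic form unchanged, so we may take $T=I$. Then $X^\top J_n X - J_p = -J_p\bar D/\beta$ (using $\bar D$ commuting with $J_p$ in the block form). Complete $\bar S$ to a full Williamson basis $S=[\bar S,\hat S]$ with $S^\top J_n S=J_n$ after the obvious permutation. For directions $Y=\hat S C$ with $C\in\mathbb{R}^{(2n-2p)\times 2p}$, symplectic orthogonality gives $\hat S^\top J_n\bar S=0$, hence $Y^\top J_n X=0$ and the last term of \eqref{eq:hess} vanishes.

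The remaining terms evaluate to
\begin{equation*}
\nabla^2 f_\beta(X)(Y,Y)=\tr\ssp{C^\top \hat D C}+\tr\ssp{C^\top J_{n-p} C J_p \bar D},
\end{equation*}
where $\hat D=\Diag(d_{p+1},\dots,d_n,d_{p+1},\dots,d_n)$. Write $C$ in $2\times2$ blocks pairing index $i>p$ with $j\le p$. The operator $C\mapsto \hat D C + J_{n-p}CJ_p\bar D$ then decouples into $4\times 4$ (or $2\times2$) blocks. On each block we diagonalize explicitly: the eigenvalues are of the form $d_i\pm d_j$ with $i>p\ge j$. Hence the form on $\mathrm{span}(\hat S)$, measured in the $C$-coordinates, has eigenvalues in $[d_{p+1}-d_p,\; d_n+d_p]$. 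Strict positivity of $d_{p+1}-d_p$ is implicitly assumed, otherwise the bound is vacuous. This yields the factor $\frac{d_n+d_p}{d_{p+1}-d_p}$.

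Next we convert from $C$-coordinates to actual directions $Y=\hat SC$. The Hessian as an operator on $Y$ with the Frobenius metric differs from the $C$-coordinate operator by the Gram matrix $\hat S^\top\hat S$. Thus $\kappa$ is bounded by the product of $\frac{d_n+d_p}{d_{p+1}-d_p}$ and $\kappa(\hat S^\top \hat S)$. To control $\kappa(\hat S^\top\hat S)$, use $\hat S^\top A\hat S=\hat D$. This gives $\lambda_{\max}(\hat S^\top\hat S)\le d_n/\lambda_{\min}(A)$ and $\lambda_{\min}(\hat S^\top\hat S)\ge d_{p+1}/\lambda_{\max}(A)$. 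Their ratio is exactly $\frac{d_n}{d_{p+1}}\kappa(A)$, and multiplying the two factors gives the claimed bound.

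The main obstacle is the second step: carefully organizing the block computation of $C\mapsto \hat DC+J CJ\bar D$ so that the signs from $J_{n-p}$ and $J_p$ produce eigenvalues $d_i\pm d_j$ rather than something degenerate. I would first verify this on the scalar case $n=2$, $p=1$, then argue that the general case decouples into copies of it. A secondary subtlety is whether the cross terms between $\mathrm{span}(\hat S)$ and $\mathrm{span}(\bar S)$ matter. The statement only concerns the restriction, so I would simply work with the compressed form and not worry about them.
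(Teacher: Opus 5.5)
Your proposal is correct and follows essentially the same route as the paper. You restrict to $Y=\hat S C$, kill the third term via $\hat S^\top J_n\bar S=0$, trap the form in $[d_{p+1}-d_p,\,d_n+d_p]\cdot\|C\|_F^2$, and then convert with $\kappa(\hat S^\top\hat S)\le \frac{d_n}{d_{p+1}}\kappa(A)$ obtained from $\hat S^\top A\hat S=\hat D$.

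The only difference is how the middle interval is obtained. The paper uses the one-line bound $|\tr(C^\top J_{n-p}CJ_p\bar D)|\le d_p\|C\|_F^2$, which follows from Cauchy--Schwarz since $J_{n-p}$ and $J_p$ are orthogonal. Your exact block diagonalization with eigenvalues $d_i\pm d_j$ is also correct and additionally shows the interval is sharp, but the step you flagged as the main obstacle can simply be bypassed.
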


\begin{proof}
Assume $X:=\bar{S}\left(I_{2p}-\bar{D}/\beta\right)^{1/2}T\tp$ is a global minimizer of \eqref{Prob_Pen}. For arbitrary $Y\in\mathrm{span}\ssp{\hat{S}}$ with $\tr\ssp{Y\tp Y}=1$, let $Y=\hat{S}\hat{Q}$.
From \eqref{eq:hess} we yield
\begin{equation*}
\begin{aligned}
\nabla^2 f_\beta(X)(Y, Y) = & \tr\left(Y\tp AY\right) - \beta \tr\left(\left(Y\tp J_nY\right)\left(X\tp J_n X-J_p\right)\right) \\
& + \frac{\beta}{2}\left\|Y\tp J_nX+X\tp J_nY\right\|_F^2,
\end{aligned}
\end{equation*}
implying
\begin{equation*}
\begin{aligned}
\mathrm{tr}\left(Y\tp AY\right) = \tr\left(\hat{Q}\tp\hat{D}\hat{Q}\right)\geq d_{p+1}\cdot\tr\left(\hat{Q}\hat{Q}\tp\right).
\end{aligned}
\end{equation*}
and
\begin{equation*}
\begin{aligned}
\mathrm{tr}\left(Y\tp AY\right) \leq d_{n}\cdot\tr\left(\hat{Q}\hat{Q}\tp\right).
\end{aligned}
\end{equation*}
Moreover, the second term can be bounded as follows
\begin{equation*}
\begin{aligned}
\left|\beta \tr\left(\left(Y\tp J_nY\right)\left(X\tp J_n X-J_p\right)\right)\right|=\left|\mathrm{tr}\left(\left(Y\tp J_nYJ_p\right)\left(T\bar{D}T\tp\right)\right)\right| \leq d_{p}\cdot\tr\left(\hat{Q}\hat{Q}\tp\right).
\end{aligned}
\end{equation*}
The third term vanishes when $X$ is a global minimizer.
Since $\mathrm{tr}\left(Y\tp Y\right)=1$, we obtain that $1=\mathrm{tr}\left(\hat{Q}\tp S\tp S\hat{Q}\right)\geq\mathrm{tr}\left(\hat{Q}\hat{Q}\tp\right)\cdot\sigma_{\mathrm{min}}\left(S\tp S\right)$, and $1=\mathrm{tr}\left(Q\tp S\tp SQ\right)\leq\mathrm{tr}\left(\hat{Q}\hat{Q}\tp\right)\cdot\sigma_{\mathrm{max}}\left(S\tp S\right)$. Therefore,
\begin{equation*}
\begin{aligned}
\frac{d_{p+1}-d_p}{\sigma_{\text{max}}\left(\hat{S}\tp\hat{S}\right)} \leq \mathrm{tr}\left(Y\tp\nabla^2 f_\beta(X)(Y)\right) \leq \frac{d_n + d_p}{\sigma_{\text{min}}\left(\hat{S}\tp\hat{S}\right)}.
\end{aligned}
\end{equation*}
The condition number can be estimated by
\begin{equation*}
\kappa\left(\nabla^2 f_\beta(X)\mid_{\mathrm{span}\ssp{\hat{S}}}\right) \leq \frac{d_n+d_p}{d_{p+1}-d_p} \cdot \frac{\sigma_{\mathrm{max}}\left(\hat{S}\tp\hat{S}\right)}{\sigma_{\mathrm{min}}\left(\hat{S}\tp\hat{S}\right)}.
\end{equation*}
Since $\hat{S}\tp A\hat{S} = \hat{D}$, we obtain
\begin{equation*}
\frac{\sigma_{\mathrm{max}}\left(\hat{S}\tp\hat{S}\right)}{\sigma_{\mathrm{min}}\left(\hat{S}\tp\hat{S}\right)} \leq \frac{d_n}{d_{p+1}}\cdot\kappa(A).
\end{equation*}
Therefore, we conclude that
\begin{equation*}
\kappa\left(\nabla^2 f_\beta(X)\mid_{\mathrm{span}\ssp{\hat{S}}}\right) \leq \frac{d_n+d_p}{d_{p+1}-d_p} \cdot \frac{d_n}{d_{p+1}}\cdot\kappa(A).
\end{equation*}
\end{proof}

This result shows the condition number bound within $\mathrm{span}\ssp{\hat{S}}$ is independent of $\beta$. However, the convergence performance of algorithms also depends on curvature in $\mathrm{span}\ssp{\bar{S}}$. The following guides $\beta$ selection via a specific direction in this subspace.

\begin{proposition}\label{prop:beta}
Let $p > 1$, $\beta > d_p$, and let $X$ be a global minimizer of $f_\beta$ in \eqref{Prob_Pen}. There exists a specific direction $Y$ with $\tr\ssp{Y\tp Y} = 1$ satisfying
\begin{equation*}
\frac{\sigma_\mathrm{min}(A)}{d_p} \cdot \beta\ssp{1-\frac{d_p}{\beta}}^2\leq \nabla^2 f_\beta(X)(Y, Y) \leq \frac{\sigma_\mathrm{max}(A)}{d_p} \cdot \beta\ssp{1-\frac{d_p}{\beta}}^2.
\end{equation*}
\end{proposition}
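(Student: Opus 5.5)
Write the global minimizer as $X=\bar{S}\Lambda T^\top$ with $\Lambda:=(I_{2p}-\bar{D}/\beta)^{1/2}$, as in the equivalence proposition. Choose the direction $Y$ as a normalized scaling of one symplectic eigenvector column of $X$ belonging to $d_p$. Let $s$ be the $p$-th column of $\bar{S}$ and $s'$ its partner, the $2p$-th column. Set
\[
Y:=\frac{1}{\|s\|_2}\,[\,0,\dots,0,s,0,\dots,0\,]\,T^\top,
\]
with $s$ placed in the $p$-th slot, so that $\tr(Y^\top Y)=1$. The geometric idea is that $Y$ perturbs the "length" of a single column of $X$ along itself. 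For such radial moves the penalty term contributes curvature of order $\beta(1-d_p/\beta)$, and the quadratic term contributes roughly $d_p$. Rather than a single exact direction, I expect the stated two-sided bound to come from this choice together with the comparison between $\|s\|_2^2$ and $s^\top A s=d_p$.

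The evaluation uses the directional Hessian formula \eqref{eq:hess} term by term; by orthogonality of $T$ we may drop $T$.

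\textbf{First term.} We have $\tr(Y^\top AY)=s^\top As/\|s\|^2=d_p/\|s\|^2$.

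\textbf{Second term.} $Y^\top J_nY$ is a single entry $s^\top J_n s/\|s\|^2=0$, so this term vanishes.

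\textbf{Third term.} $X^\top J_nX-J_p=-\Lambda J_p\bar{D}\Lambda/\beta\ldots$, and the relevant quantity is $Y^\top J_nX+X^\top J_nY$. Since $s^\top J_n \bar{S}=e_p^\top J_p$, i.e.\ $s$ pairs only with $s'$, the matrix $Y^\top J_nX$ has a single nonzero entry $(1-d_p/\beta)^{1/2}/\|s\|$ in position $(p,2p)$. The symmetrized matrix therefore has two such entries, giving the contribution $\frac{\beta}{2}\cdot 2(1-d_p/\beta)/\|s\|^2=\beta(1-d_p/\beta)/\|s\|^2$.

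Summing, $\nabla^2 f_\beta(X)(Y,Y)=\bigl(d_p+\beta-d_p\bigr)/\|s\|^2=\beta/\|s\|^2$. This does not match the target $\beta(1-d_p/\beta)^2$. The mismatch tells me the intended direction must be a combination in which the penalty curvature partly cancels against the first-order correction to the $d_p$ term. The natural fix is to replace $s$ by the actual $p$-th column $x=(1-d_p/\beta)^{1/2}s$ of $X$, but normalization removes the scaling. So the cancellation has to come from mixing $s$ with $J_n$-related directions, for instance $Y\propto J_n^\top A s$-type directions or $A^{-1}J_n s'$.

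I would therefore parametrize $Y=(as+bv)/\text{norm}$ with a second vector $v$ and solve for the combination. The target exponent $(1-d_p/\beta)^2$ suggests that $Y$ should itself carry a factor $(1-d_p/\beta)^{1/2}$ in some component. This is the main obstacle: identifying the correct $Y$. I would try directions of the form $Y=\bar{S}E T^\top$ with $E$ supported on the $(p,p)$ and $(2p,2p)$ entries, since these are tangent to the orbit $\bar{S}\Lambda T^\top$ only when they commute appropriately.

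Once the direction gives a value $c\,\beta(1-d_p/\beta)^2/\|s\|^2$ (with $c$ absorbing $d_p$), the final step is routine. Using $d_p=s^\top As$ together with $\sigma_{\min}(A)\|s\|^2\le s^\top As\le\sigma_{\max}(A)\|s\|^2$, we get $\sigma_{\min}(A)/d_p\le 1/\|s\|^2\le\sigma_{\max}(A)/d_p$, which yields exactly the claimed sandwich.
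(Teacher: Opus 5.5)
Your proposal has a genuine gap: it never exhibits a direction. The final step you call routine is conditional on obtaining a curvature value of the form $\beta(1-d_p/\beta)^2/\|s\|_2^2$, and none of your candidates produces one. Your computation for the single column, $\nabla^2 f_\beta(X)(Y,Y)=\beta/\|s\|_2^2$, is correct, but it only gives the lower bound. It can violate the upper bound: for $A=\alpha I_{2n}$ one has $\sigma_{\min}(A)=\sigma_{\max}(A)=d_p=\alpha$ and $\|s\|_2=1$, so both bounds equal $(\beta-\alpha)^2/\beta<\beta$.

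The paper's proof uses exactly the last candidate you mention, $Z=\bar S(e_pe_p^\top+e_{2p}e_{2p}^\top)$ (the pair $s,s'$ in slots $p$ and $2p$) with $Y=ZT^\top/\|ZT^\top\|_F$. It asserts that the three terms of \eqref{eq:hess} are $2d_p$, $-2d_p$ and $\frac{\beta}{2}\cdot 2(1-d_p/\beta)^2$. The first two are right. However, as you observed, $X$ enters $Y^\top J_nX$ only through $(I_{2p}-\bar D/\beta)^{1/2}$. For $Y=ZT^\top$ one gets $Y^\top J_nX+X^\top J_nY=2(1-d_p/\beta)^{1/2}\,T(e_pe_{2p}^\top-e_{2p}e_p^\top)T^\top$, so the third term, and hence the total, is $4(\beta-d_p)$. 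As a sanity check of this block computation, take $n=p=1$, $A=\alpha I_2$: then $g(u):=f_\beta(uI_2)=\alpha u^2+\frac{\beta}{2}(u^2-1)^2$ has $g''=4(\beta-\alpha)$ at $u^2=1-\alpha/\beta$. So the search for a natural direction with curvature proportional to $\beta(1-d_p/\beta)^2$, which you flag as the main obstacle, does not succeed along the paper's route either.

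The missing idea is that the statement is purely existential, so no exact value is needed. Let $L\le U$ denote the two bounds. The map $Y\mapsto\nabla^2 f_\beta(X)(Y,Y)$ is continuous on the path-connected unit sphere of $\mathbb{R}^{2n\times 2p}$. It therefore suffices to find one unit direction with value at least $L$ and one with value at most $U$.

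Your direction serves for the first, since $\beta/\|s\|_2^2\ge\sigma_{\min}(A)\beta/d_p\ge L$. For the second, take $Y_0$ proportional to $\bar S(e_pe_{2p}^\top-e_{2p}e_p^\top)T^\top$. This is tangent to the family of minimizers $\bar S(I_{2p}-\bar D/\beta)^{1/2}T^\top$, $T\in\mathrm{OrSp}(2p)$, because $e_pe_{2p}^\top-e_{2p}e_p^\top$ is both skew-symmetric and Hamiltonian. Its three terms are $2d_p$, $-2d_p$ and $0$, so its value is $0\le U$. The intermediate value theorem along a path on the sphere joining the two directions then yields a unit $Y$ with value in $[L,U]$.

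This proves the proposition as stated, but the inequality then carries almost no information. What the pair direction actually gives follows from $\|ZT^\top\|_F^2=\|s\|_2^2+\|s'\|_2^2\in[2d_p/\sigma_{\max}(A),\,2d_p/\sigma_{\min}(A)]$: namely $\frac{2\sigma_{\min}(A)}{d_p}(\beta-d_p)\le\nabla^2 f_\beta(X)(Y,Y)\le\frac{2\sigma_{\max}(A)}{d_p}(\beta-d_p)$.
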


\begin{proof}
Let $X := \bar{S} (I_{2p} - \bar{D} / \beta)^{1/2} T^{\top}$ and $Z$ consists of all zero columns except two columns of symplectic eigenvector pair corresponding to $d_p$.
Substituting $ZT\tp$ into \eqref{eq:hess} gives
\begin{equation*}
\begin{aligned}
\nabla^2 f_\beta(X)\ssp{ZT\tp, ZT\tp} &= 2d_p - 2d_p + \frac{\beta}{2} \cdot 2\ssp{1-\frac{d_p}{\beta}}^2 = \beta\ssp{1-\frac{d_p}{\beta}}^2.
\end{aligned}
\end{equation*}
Moreover, $\frac{d_p}{\sigma_{\mathrm{max}}(A)}\leq \|ZT\tp\|_F^2 \leq \frac{d_p}{\sigma_{\mathrm{min}}(A)}$. Let $Y:= ZT\tp/\|ZT\tp\|_F$, then the bound is obtained.
\end{proof}

It is evident from Proposition \ref{prop:beta} that $\beta$ cannot be too near $d_p$ because the bounds tend to $0$, which results in ill-conditioning. As the optimal $\beta$ should be at the same scale as the directional Hessians in $\mathrm{span}\ssp{\hat{S}}$, $\beta\ssp{1-\frac{d_p}{\beta}}^2$ should fall between the scales of $d_{p+1}-d_p$ and $d_n + d_p$.
Let 
\begin{equation*}
\beta\ssp{1-\frac{d_p}{\beta}}^2 = d_p
\end{equation*}
and we yield $\beta_{\mathrm{best}} := \frac{3+\sqrt{5}}{2}d_p$. Since $\tr(A) \geq 2(n-p+1) d_p$ (trace minimization theorem), a practical heuristic is $\beta_{\mathrm{sug}} := \frac{\tr(A)}{n-p+1}$. For our proposed Algorithm, we can refine $\beta$ to $\beta := \eta \theta_p$ ($\eta > 1$) after an SRR step, where $\theta_p$ refers to the $p$-th Ritz value.



\subsection{Cost Analysis and Comparison}

In this subsection, we analyze the computational costs of our proposed \textit{SympEigPen} method and Riemannian optimization approaches for solving the symplectic eigenvalue problem. The computational complexities of these algorithms are governed by different factors, leading to distinct performance profiles. We provide a concise explanation for the superior efficiency of our method.

The cost of Riemannian optimization methods is primarily dominated by a few products between the $2n \times 2n$ matrix $A$ and $2n \times 2p$ matrices, each with complexity $\mathcal{O}\ssp{\text{nnz} \cdot p}$, where nnz denotes the number of nonzero elements in $A$. While this is acceptable when $A$ is sparse, other operations, such as solving Lyapunov equations and performing retractions, incur an additional cost of $\mathcal{O}(np^2 + p^3)$. Although $p < n$ typically holds, the constant factor corresponding to the $p^3$ term is usually large. Furthermore, each iteration involves multiple products between $2n \times 2p$ and $2p \times 2p$ matrices, which collectively contribute significantly to the overall cost.

In contrast, the proposed formulation in \eqref{Prob_Pen} achieves a per-iteration cost of $\mathcal{O}(np)$ when using first-order optimization methods, excluding the cost of evaluating the function and its gradient. These evaluations generally require $\mathcal{O}(\text{nnz} \cdot p + np^2)$, where the $np^2$ term arises only twice from computing $X^\top J_n X$ and \linebreak $J_n X \ssp{X^\top J_n X - J_p}$. Moreover, \textit{SympEigPen} requires far fewer products between $2n \times 2p$ and $2p \times 2p$ matrices per iteration than Riemannian approaches. Although the restart procedure also has a cost of $\mathcal{O}(np^2 + p^3)$, similar to that in Riemannian methods, it is invoked infrequently, thus having minimal impact on the overall performance.

The computational costs of the basic linear algebra operations, as well as of evaluating the function value and gradient, are provided in Table \ref{tab:cost}. A comparison of key operations between Riemannian optimization and TPM-based approaches is presented in Table \ref{tab:cost-comparison}.

\begin{table}[htbp!]
\centering
\begin{tabular}{|c|c|c|}
\hline
\multirow{3}{*}{Computing basic quantities} & $AX$ & $\text{nnz}\cdot 2p$ \\
 & $X\tp J_n X$ & $8np^2$ \\
 & $X\tp J_n X - J_p$ & $4p^2$ \\
\hline
\multirow{3}{*}{Computing $f_\beta(X)$} & $\tr\ssp{X\tp AX}$ & $8np$ \\
 & $\|X\tp J_n X - J_p\|_F^2$ & $12p^2$ \\
 & $\frac{1}{2}\tr\ssp{X\tp AX} + \frac{\beta}{4}\|X\tp J_n X - J_p\|_F^2$ & $1$ \\
\hline
\multirow{2}{*}{Computing $\nabla f_\beta(X)$} & $J_n X \ssp{X\tp J_n X - J_p}$ & $8np^2$ \\
 & $AX -\beta J_n X \ssp{X\tp J_n X - J_p}$ & $4np$ \\
\hline
In total & \multicolumn{2}{c|}{$\text{nnz}\cdot 2p + 16np^2$}\\ \hline
\end{tabular}
\caption{Computational complexity of the zero- and first-order oracles in \eqref{Prob_Pen}.}
\label{tab:cost}
\end{table}

\begin{table}[htbp!]
\centering
\resizebox{\linewidth}{!}{
\begin{tabular}{|cc|cc|}
\hline
\multicolumn{2}{|c|}{Riemannian optimization approaches} & \multicolumn{2}{c|}{TPM based approaches} \\
\hline
\multirow{3}{*}{Riemannian gradient} & Solving Lyapunov equation: $> 200p^3$ & \multirow{3}{*}{Euclidean gradient} & \multirow{3}{*}{$\nabla f_\beta(X)$ : $\text{nnz}\cdot 2p + 16np^2$} \\
 & Computing inverse $\ssp{X\tp X}^{-1}$: $> 8np^2 + 16p^3$ & & \\
 & In total: $> \text{nnz}\cdot 2p + 72np^2 + 16p^3$ & & \\
\hline
\multirow{2}{*}{Retraction} & Cayley retraction: $> 40np^2 + 16p^3$ & \multirow{2}{*}{No retraction} & -- \\
 & Quasi-geodesic retraction: $> 32np^2 + 1280p^3$ & & -- \\
\hline
\end{tabular}
}
\caption{Comparison on the computational complexity of the first-order oracles among Riemannian optimization approaches and TPM based approaches.}
\label{tab:cost-comparison}
\end{table}

\section{Numerical Experiments}
\label{sec:numexp}
In this section, we evaluate the numerical performance of applying unconstrained optimization approaches for solving \eqref{Prob_Ori} through \eqref{Prob_Pen}.  All the numerical experiments were performed on a workstation with two Intel(R) Xeon(R) Gold 5317 processors (3.00 GHz × 12, 18M Cache), 512 GB RAM, Python 3.9, and Ubuntu 20.04.1. All Python packages, including NumPy 2.3.4 and SciPy 1.16.2, are installed via the Conda package manager, with a primary reliance on the conda-forge channel.

The equivalence between \eqref{Prob_Ori} and \eqref{Prob_Pen} yields direct implementations of existing unconstrained optimization approaches for solving \eqref{Prob_Ori}. As a result, we tested \texttt{L-BFGS-B}, \texttt{TNC} (both from SciPy), and our BB method in Algorithm \ref{alg:BB}. Comparisons include \texttt{eigs} (SciPy, applied to $J_nA$), and the Riemannian optimization method from \cite{Son2021Computing} (adapted from MATLAB \cite{Son2021Computing} to Python). The Riemannian optimization uses identical parameters from \cite{Son2021Computing}. For \textit{TPM}, $\beta$ was tuned and iterations stopped when $\|\nabla f_\beta\|_F < 10^{-7}$. The other parameters are set as $\bar{X}^{(0)} := \mmp{I_{2p}[\cdot,1 : p], I_{2p}[\cdot,p + 1 : 2p]}$, $\gamma^{(0)} := 10^{-4}, \bar{\gamma} :=10^5, \underline{\gamma} := 10^{-8}$, $\bar{\xi} = 1, \underline{\xi} := 0.99$, $k_{\text{max}} := 5000$, $\delta_\varepsilon := 0.1$, $\epsilon_0 := 0.1$, $\delta:=0.5, \lambda := 10^{-8}$, $L := 50$ and $\eta:=1.1$.

Three matrix types are used in the experiments. A \textit{Dense} matrix is defined by $A := \frac{1}{2}\left(B+B\tp\right)$ with random $B\in\mathbb{R}^{2n\times 2n}$, $B_{ij}\sim\mathcal{N}(0,1)$ and scaled to have $\sigma_{\text{min}}(A)=1$, $\sigma_{\text{max}}(A)=n$ by adding $\lambda I_{2n}$. With the same scaling, a \textit{Sparse} matrix is randomly generated by \texttt{scipy.sparse.random} with $\sigma_{\text{min}}(A)=1$, $\sigma_{\text{max}}(A)=n$, density $\sigma = 10/n$. A \textit{Sparse-add-low-rank} matrix is defined by $A := B + CC\tp$ with sparse $B$ (density $\sigma=10/n$, scaled to $\sigma_{\text{min}}(B)=1$, $\sigma_{\text{max}}(B)=n$) and random $C\in\mathbb{R}^{2n\times m}$ ($C_{ij}\sim\mathcal{U}[-1,1]$, scaled to $\sigma_{\text{max}}(CC\tp)=n$, $m=10$).

For a solution $X$ versus reference $X_{\text{eigs}}$ (symplectic eigenvectors computed by \texttt{eigs}), the Golub-Werman subspace error is defined as
\begin{equation*}
\mathcal{E}:=\left\|X(X\tp X)^{-1}X\tp - X_\mathrm{eigs}(X_\mathrm{eigs}\tp X_\mathrm{eigs})^{-1}X_\mathrm{eigs}\tp\right\|_F.
\end{equation*}
For large $n$ (when \texttt{eigs} is inaccurate or slow), we use the residue instead, which is defined as
\begin{equation*}
\mathcal{R}:=\frac{\|AX - J_nXJ_p\tp D\|_F}{\|AX\|_F}.
\end{equation*}

\subsection{Penalty Parameter Tuning}
We conduct numerical experiments to tune the penalty parameter $\beta$ for problem \eqref{Prob_Pen} with $n=200$ and $p=10$, testing values $\beta := 1.001d_p$, $\beta_{\text{best}}$, $\beta_{\text{sug}}$, $2\beta_{\text{sug}}$, $5\beta_{\text{sug}}$, $10\beta_{\text{sug}}$, and $100\beta_{\text{sug}}$. The iteration counts for three different matrix types—dense (Table \ref{tab:beta-dense}), sparse (Table \ref{tab:beta-sparse}), and sparse-plus-low-rank (Table \ref{tab:beta-sparse-add-low-rank})—are reported. Several key observations emerge from the results.

First, when $\beta$ is chosen very close to the theoretical lower bound $d_p$ (for guarantee of the exact penalization property), the convergence slows significantly across all algorithms and matrix types. This behavior is consistent with the theoretical prediction in Proposition \ref{prop:beta}, which establishes that $\beta$ should be slightly larger than $d_p$ to avoid ill-conditioning.

Second, the suggested value $\beta_{\text{sug}}$ demonstrates remarkable robustness, performing nearly optimally across all tested scenarios. In dense matrices (Table \ref{tab:beta-dense}), $\beta_{\text{sug}}$ achieves iteration counts very close to the empirically optimal $\beta_{\text{best}}$. Similar patterns hold for sparse and sparse-plus-low-rank matrices, where $\beta_{\text{sug}}$ maintains efficient convergence while avoiding the poor performance at extreme $\beta$ values.

Third, excessively large values lead to substantially increased iteration counts, approaching the poor performance observed at $\beta \approx d_p$. This U-shaped performance pattern underscores the importance of selecting $\beta$ within an appropriate range—neither too close to $d_p$ nor excessively large.

\begin{table}[htbp!]
\centering
\begin{tabular}{|c|ccccccc|}
\hline
$\beta$ &$1.001d_p$&$\beta_\mathrm{best}$ &$\beta_\mathrm{sug}$ &$2\beta_\mathrm{sug}$ &$5\beta_\mathrm{sug}$ &$10\beta_\mathrm{sug}$ &$100\beta_\mathrm{sug}$\\
\hline
LBFGS&1318&350&377&378&387&437&1208\\ 
TNC&37&29&31&35&38&41&112\\ 
SympEigPen-B&9326&565&611&638&819&990&2474\\ \hline
\end{tabular}
\caption{Iteration counts for dense $A$ and varying $\beta$.}
\label{tab:beta-dense}
\end{table}

\begin{table}[htbp!]
\centering
\begin{tabular}{|c|ccccccc|}
\hline
$\beta$ &$1.001d_p$&$\beta_\mathrm{best}$&$\beta_\mathrm{sug}$ &$2\beta_\mathrm{sug}$ &$5\beta_\mathrm{sug}$ &$10\beta_\mathrm{sug}$ &$100\beta_\mathrm{sug}$\\
\hline
LBFGS&892&640&650&683&698&770&1281\\ 
TNC&40&35&36&43&55&59&105\\ 
SympEigPen-B&6996&988&1077&1604&1819&1904&3872\\ \hline
\end{tabular}
\caption{Iteration counts for sparse $A$ and varying $\beta$.}
\label{tab:beta-sparse}
\end{table}

\begin{table}[htbp!]
\centering
\begin{tabular}{|c|ccccccc|}
\hline
$\beta$ &$1.001d_p$&$\beta_\mathrm{best}$&$\beta_\mathrm{sug}$ &$2\beta_\mathrm{sug}$ &$5\beta_\mathrm{sug}$ &$10\beta_\mathrm{sug}$ &$100\beta_\mathrm{sug}$\\
\hline
LBFGS&971&475&518&574&580&594&1093\\ 
TNC&34&28&31&35&45&46&81\\ 
SympEigPen-B&6962&857&908&1186&1281&1321&2662\\ \hline
\end{tabular}
\caption{Iteration counts for sparse-add-low-rank $A$ and varying $\beta$.}
\label{tab:beta-sparse-add-low-rank}
\end{table}

\subsection{Convergence Rate Comparison}
We tested convergence rates for varying $n$ and $p$, using the wall-clock time as a consistent metric.

Figures \ref{fig:error-time-dense}--\ref{fig:error-time-sparse-add-low-rank} illustrate the relationship between Golub-Werman subspace errors and wall-clock time for all compared algorithms. A key observation is that all optimization methods under the \textit{TPM} framework (namely \texttt{L-BFGS-B}, \texttt{TNC}) and our proposed \textit{SympEigPen}, successfully achieve convergence. Notably, their convergence rates are significantly faster than that of the Riemannian optimization method across all test cases.

Among the \textit{TPM}-based approaches, \textit{SympEigPen}, our enhanced BB-step nonmonotonic gradient method, consistently demonstrates the best performance. It not only reaches a given error tolerance in the shortest time but also exhibits stable convergence behavior. This superior performance substantiates the effectiveness of our core strategy: adaptively adjusting the penalty parameters by leveraging the special structure of first-order stationary points during the minimization of the penalty function. This adaptive mechanism appears crucial for efficient optimization.

Furthermore, the advantage of \textit{SympEigPen} becomes more pronounced as the number of desired eigenvectors $p$ increases. It maintains a clear performance lead over both the Riemannian method and other unconstrained solvers (\texttt{L-BFGS-B}, \texttt{TNC}), suggesting that our method offers superior scalability for problems requiring larger subspaces.

\begin{figure}[htbp!]
\centering
\begin{subfigure}[b]{0.3\linewidth}
    \centering
    \includegraphics[width=\linewidth]{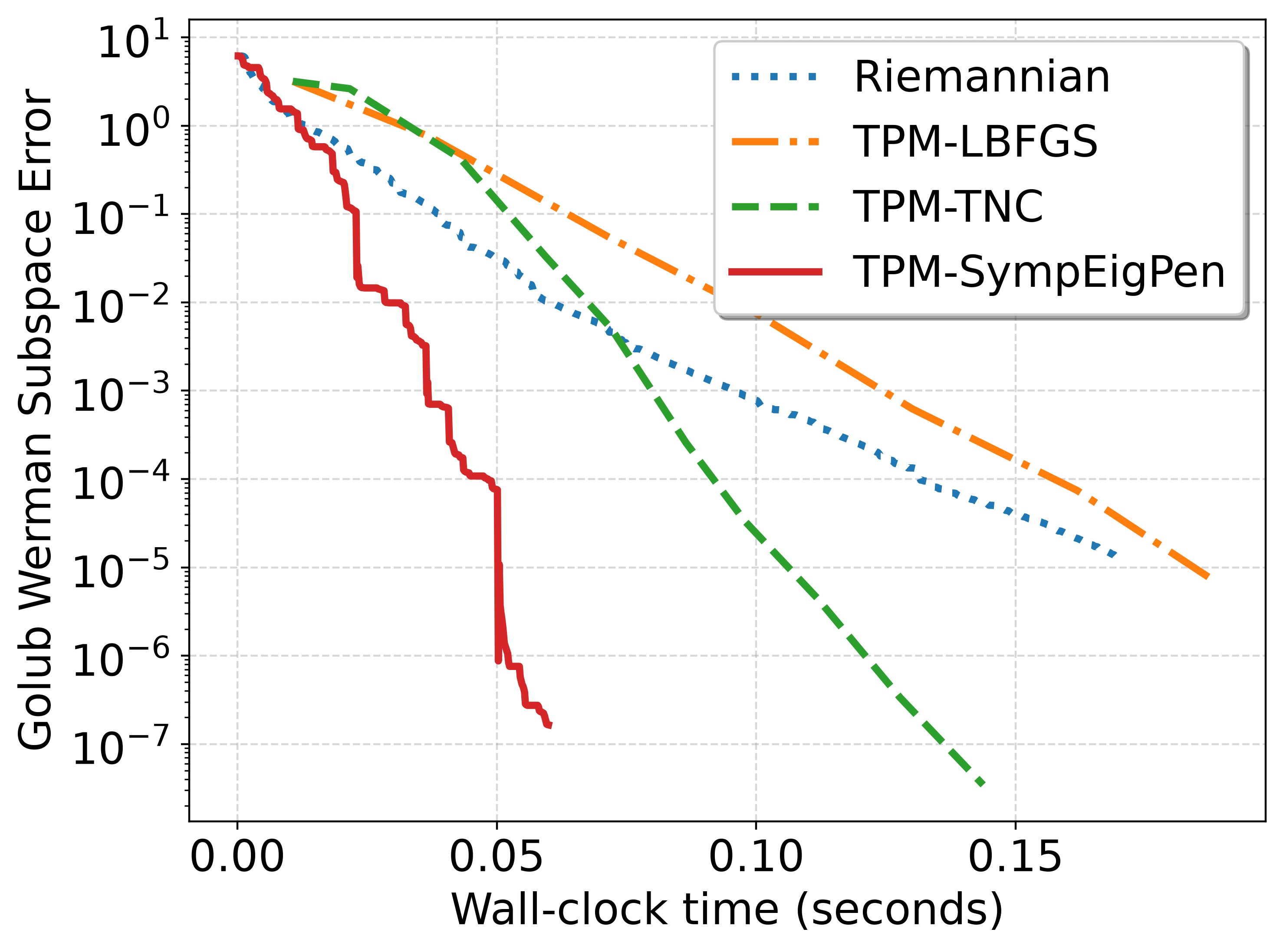}
    \caption{Dense $A$, $200 \times 200$, $p=10$}
    \label{fig:dense-sub11}
\end{subfigure}
\begin{subfigure}[b]{0.3\linewidth}
    \centering
    \includegraphics[width=\linewidth]{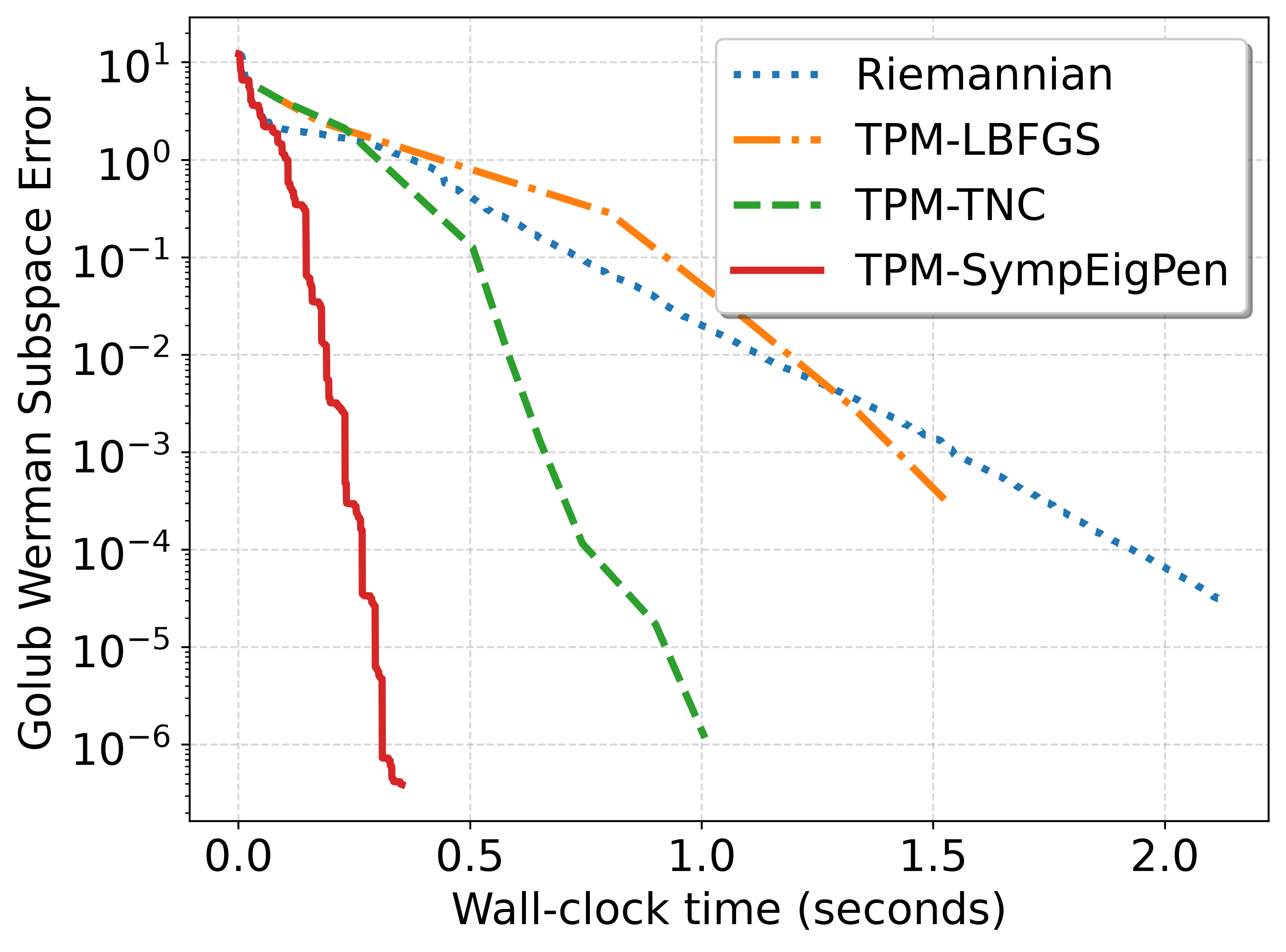}
    \caption{Dense $A$, $200 \times 200$, $p=50$}
    \label{fig:dense-sub12}
\end{subfigure}
\begin{subfigure}[b]{0.3\linewidth}
    \centering
    \includegraphics[width=\linewidth]{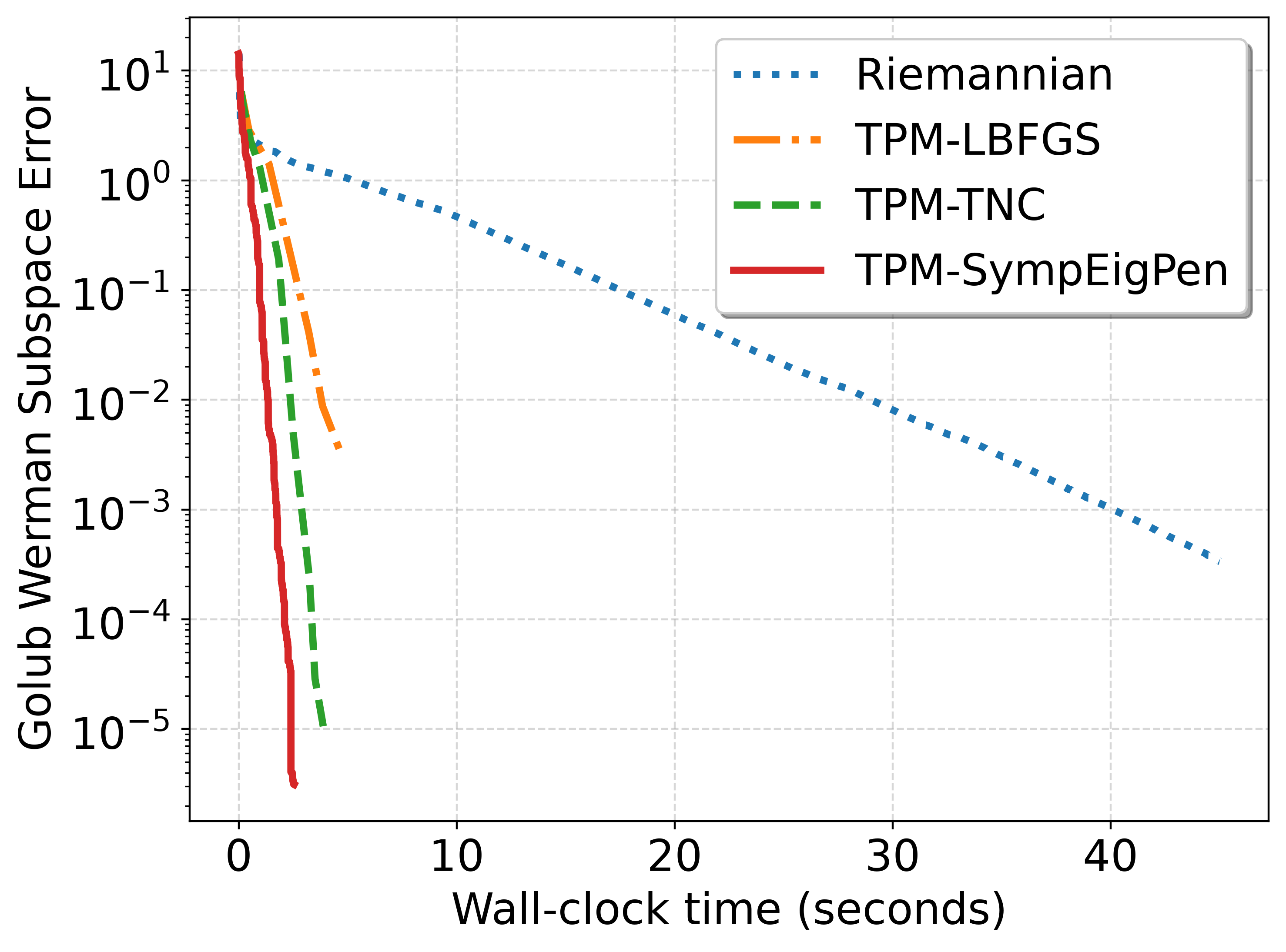}
    \caption{Dense $A$, $200 \times 200$, $p=100$}
    \label{fig:dense-sub13}
\end{subfigure}\\
\begin{subfigure}[b]{0.3\linewidth}
    \centering
    \includegraphics[width=\linewidth]{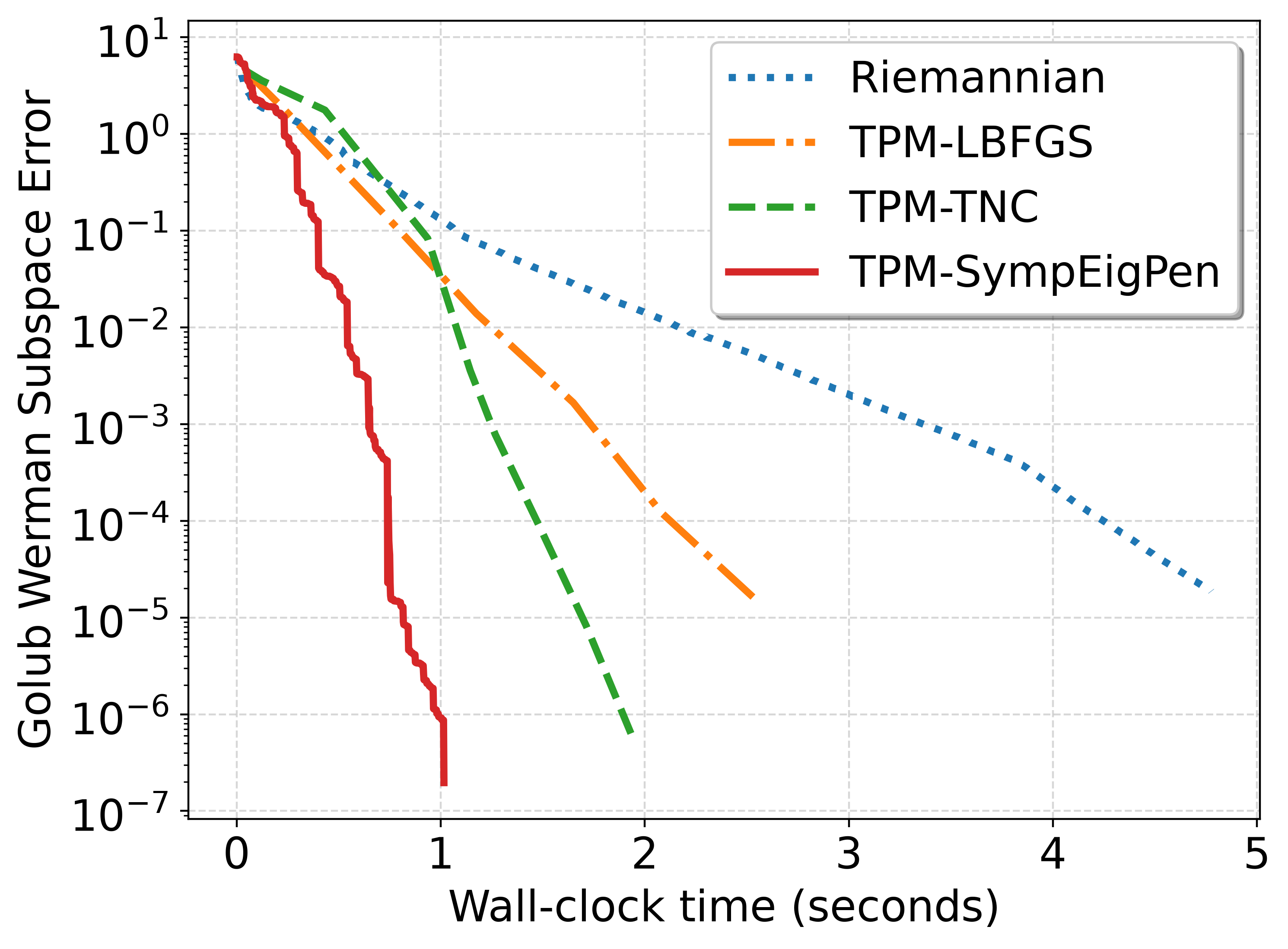}
    \caption{Dense $A$, $400 \times 400$, $p=10$}
    \label{fig:dense-sub21}
\end{subfigure}
\begin{subfigure}[b]{0.3\linewidth}
    \centering
    \includegraphics[width=\linewidth]{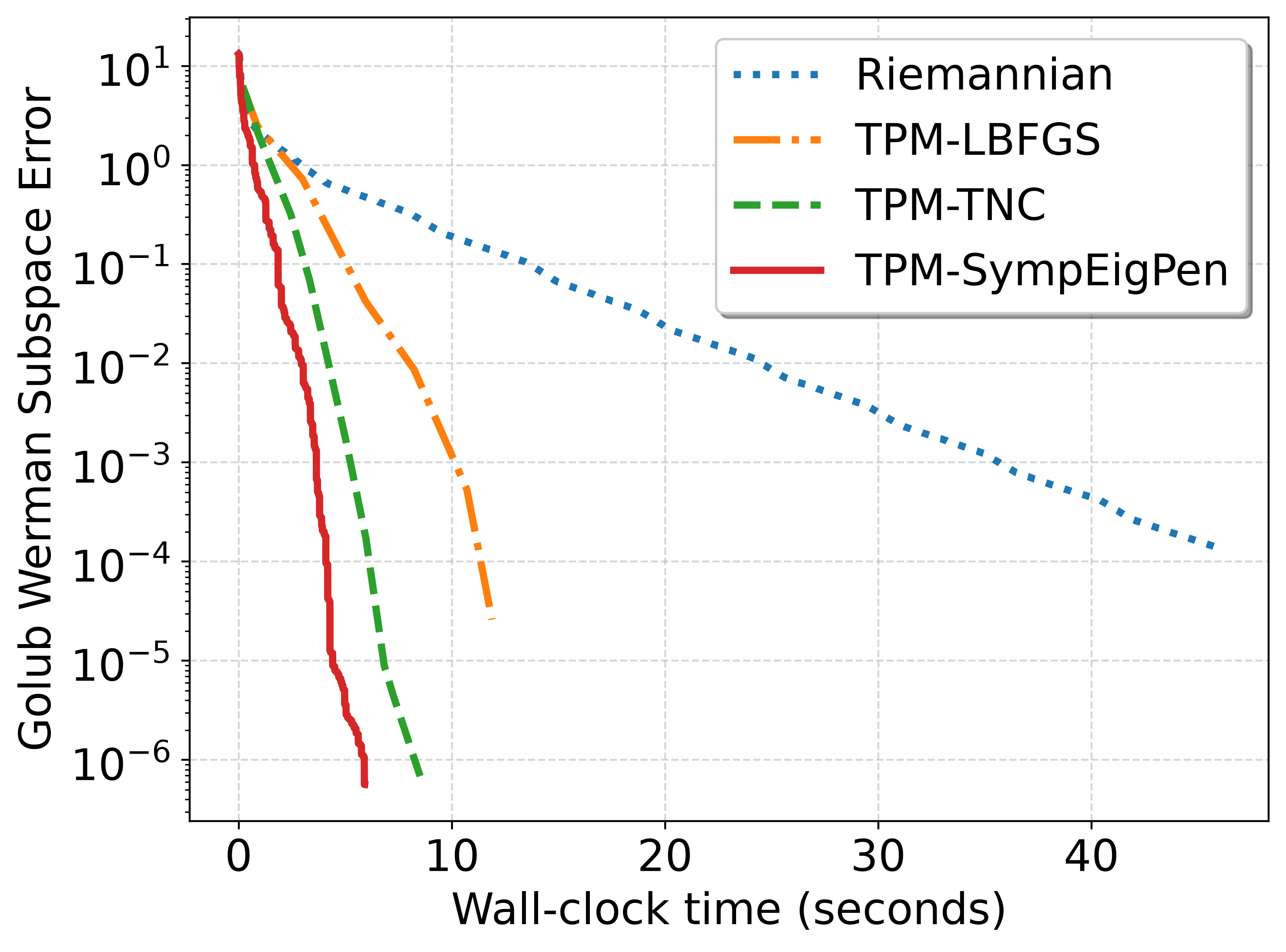}
    \caption{Dense $A$, $400 \times 400$, $p=50$}
    \label{fig:dense-sub22}
\end{subfigure}
\begin{subfigure}[b]{0.3\linewidth}
    \centering
    \includegraphics[width=\linewidth]{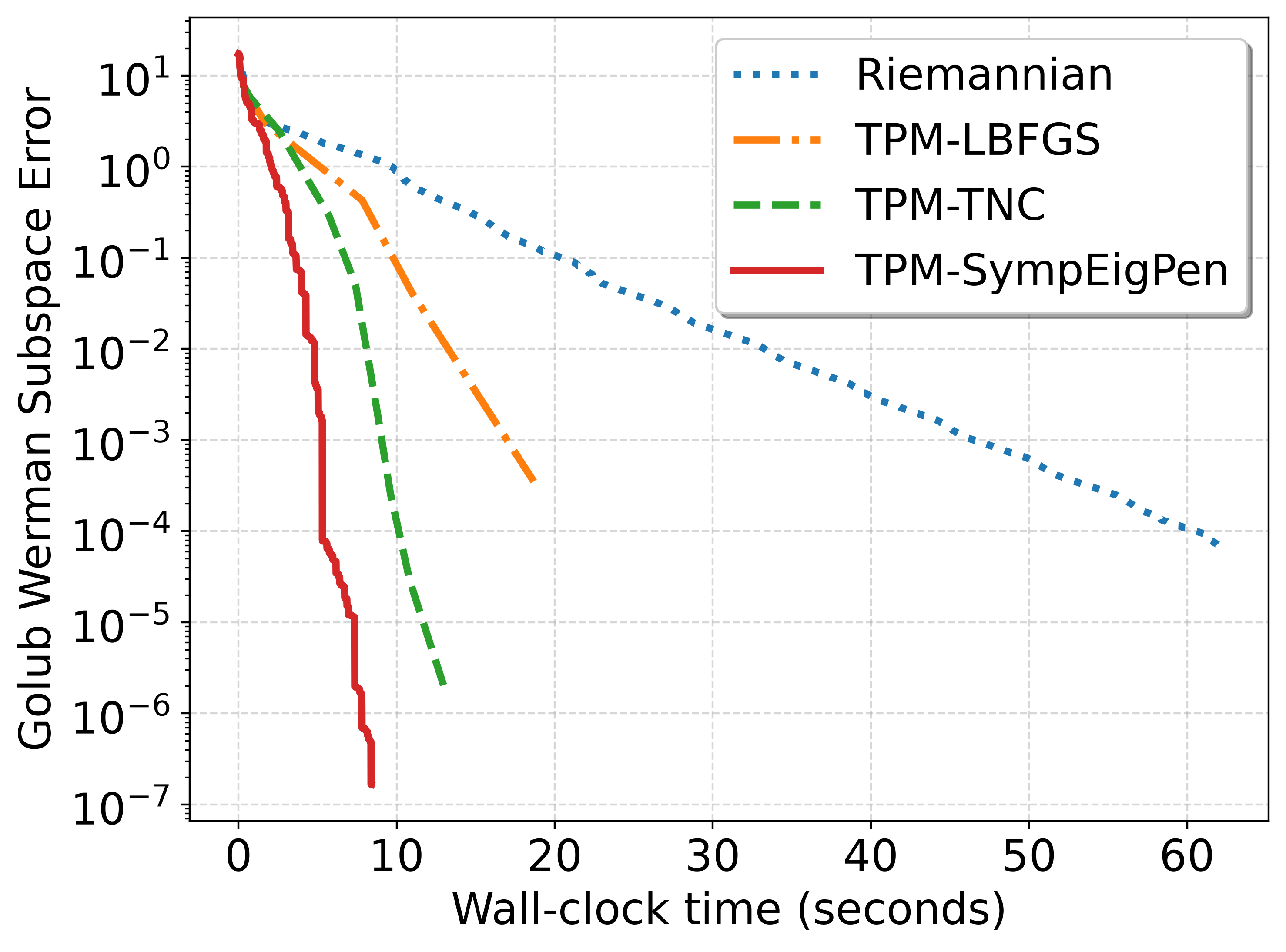}
    \caption{Dense $A$, $400 \times 400$, $p=100$}
    \label{fig:dense-sub23}
\end{subfigure}
\caption{Golub-Werman Subspace errors for dense $A$ (sizes $200\times200$, $\hat{d}_i=i$) and varying $p$.}
\label{fig:error-time-dense}
\end{figure}

\begin{figure}[htbp!]
\centering
\begin{subfigure}[b]{0.3\linewidth}
    \centering
    \includegraphics[width=\linewidth]{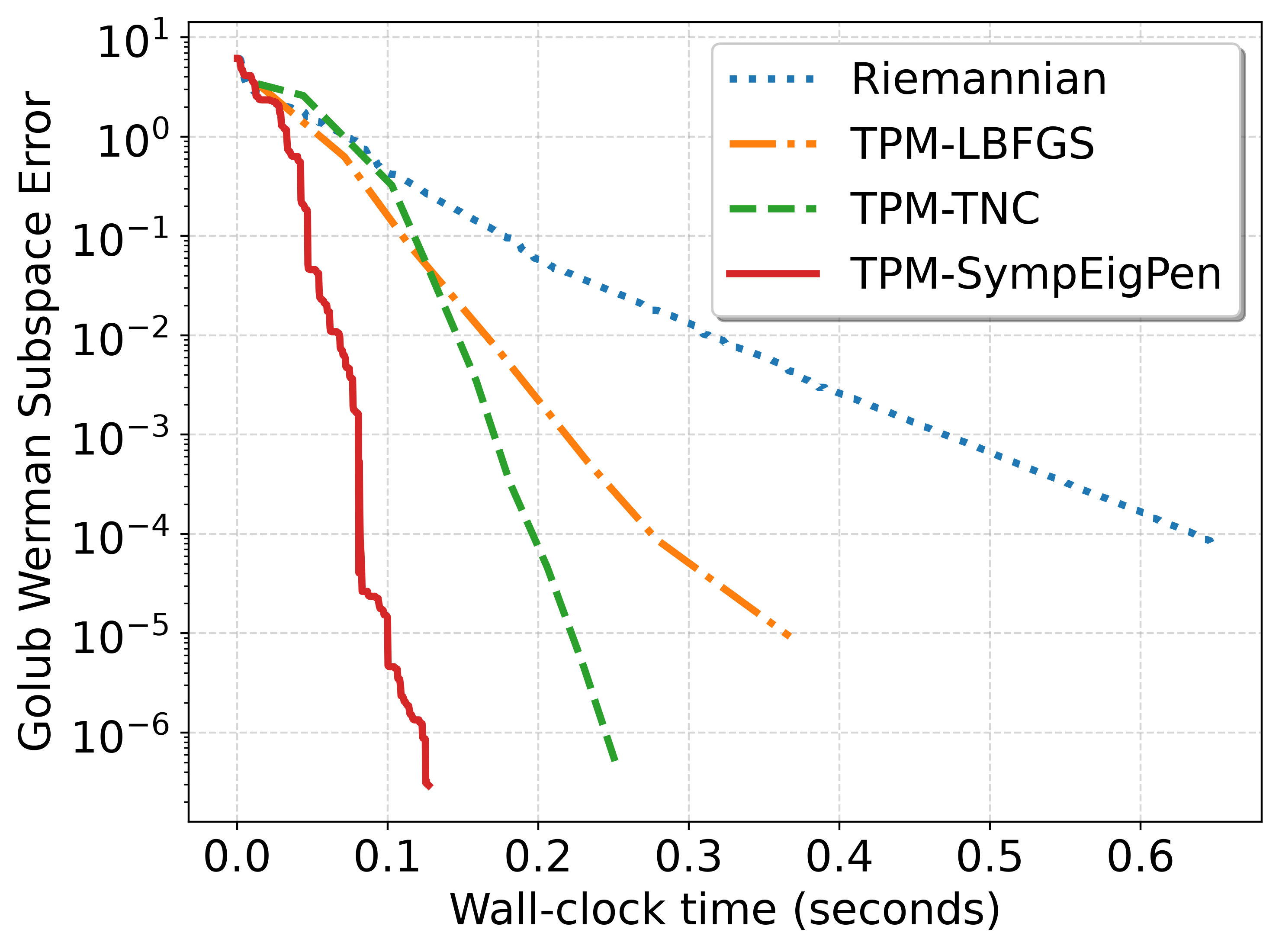}
    \caption{Sparse $A$, $200 \times 200$, $p=10$}
    \label{fig:sparse-sub11}
\end{subfigure}
\begin{subfigure}[b]{0.3\linewidth}
    \centering
    \includegraphics[width=\linewidth]{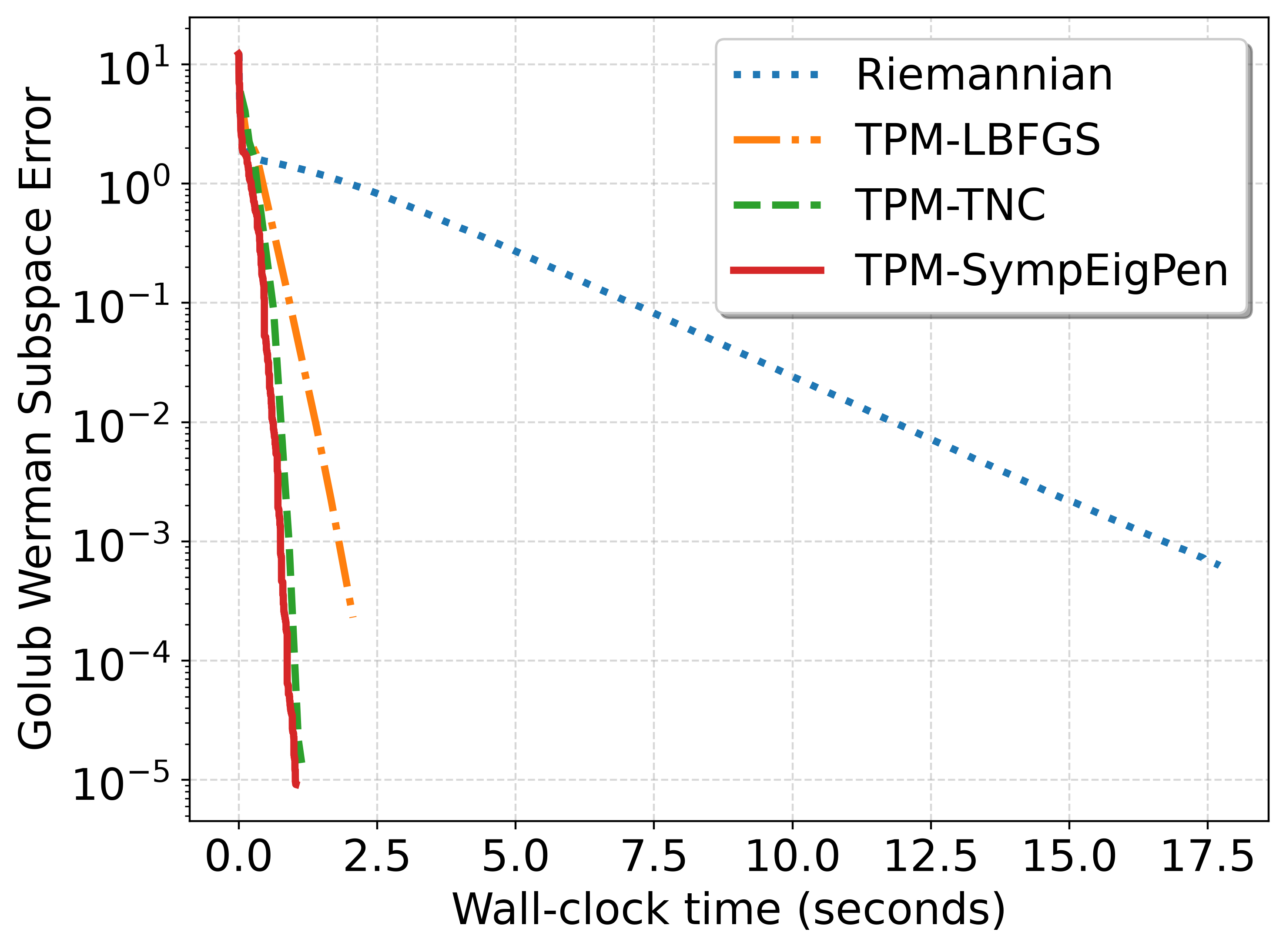}
    \caption{Sparse $A$, $200 \times 200$, $p=50$}
    \label{fig:sparse-sub12}
\end{subfigure}
\begin{subfigure}[b]{0.3\linewidth}
    \centering
    \includegraphics[width=\linewidth]{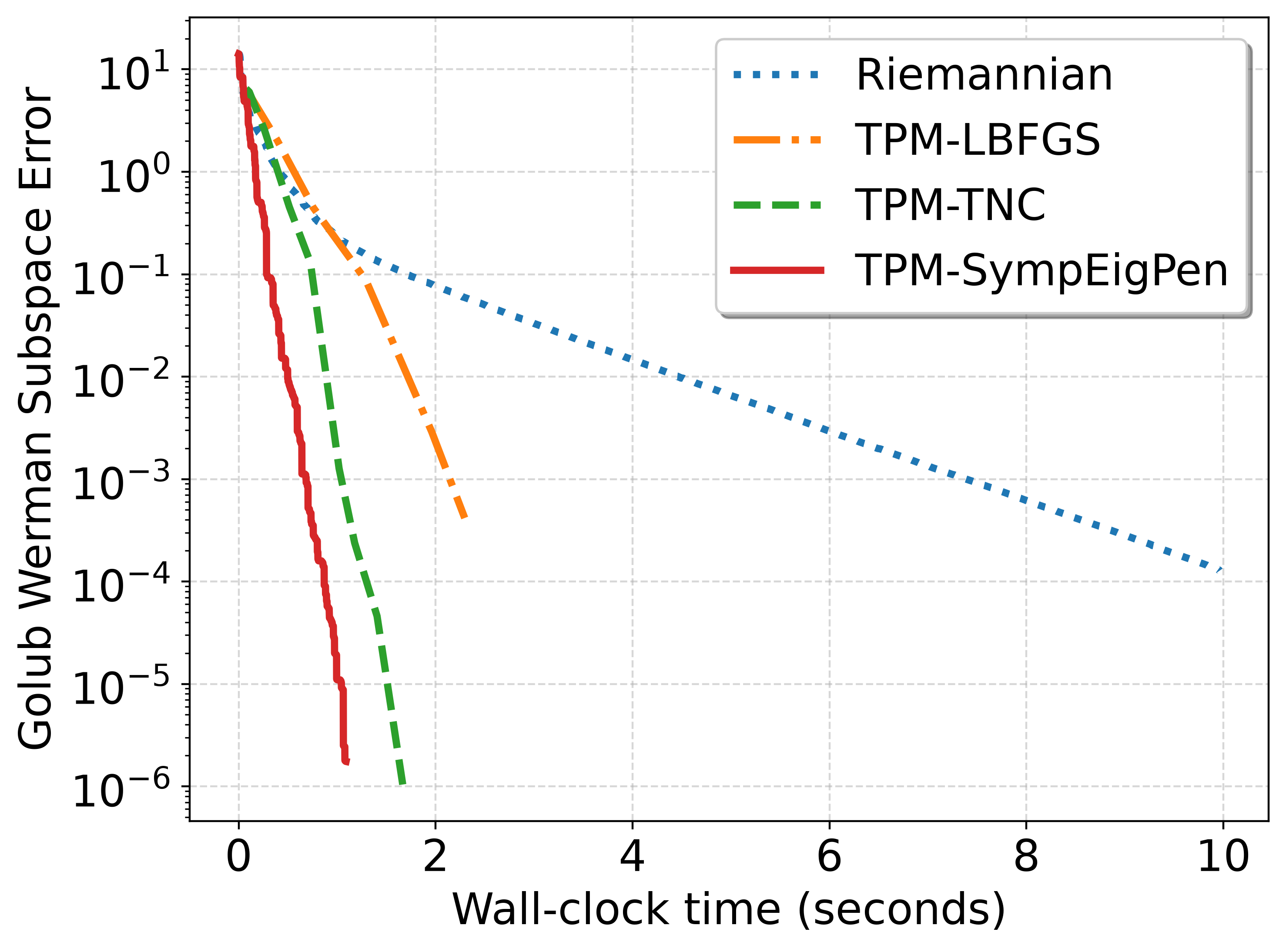}
    \caption{Sparse $A$, $200 \times 200$, $p=100$}
    \label{fig:sparse-sub13}
\end{subfigure}\\
\begin{subfigure}[b]{0.3\linewidth}
    \centering
    \includegraphics[width=\linewidth]{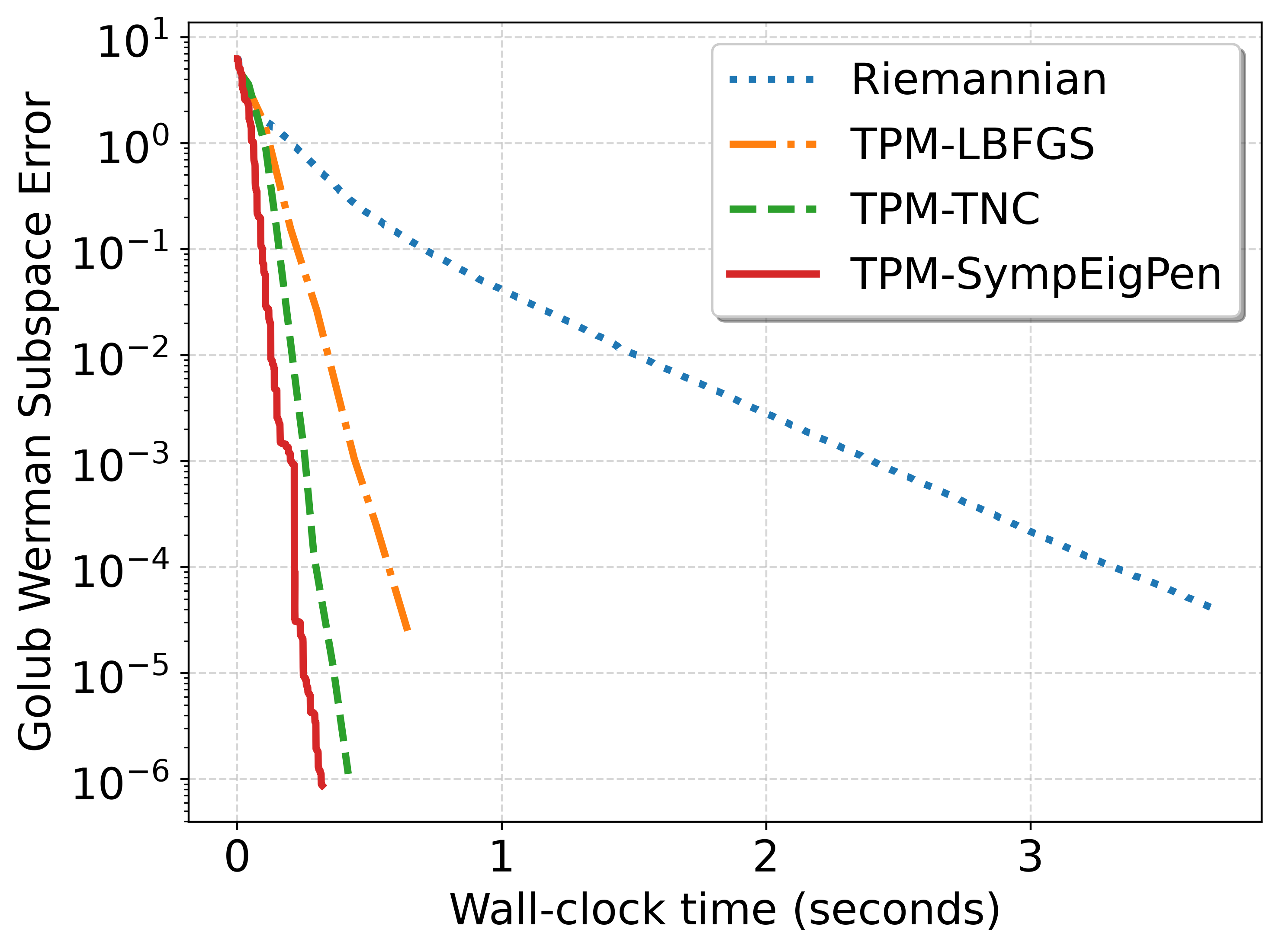}
    \caption{Sparse $A$, $400 \times 400$, $p=10$}
    \label{fig:sparse-sub21}
\end{subfigure}
\begin{subfigure}[b]{0.3\linewidth}
    \centering
    \includegraphics[width=\linewidth]{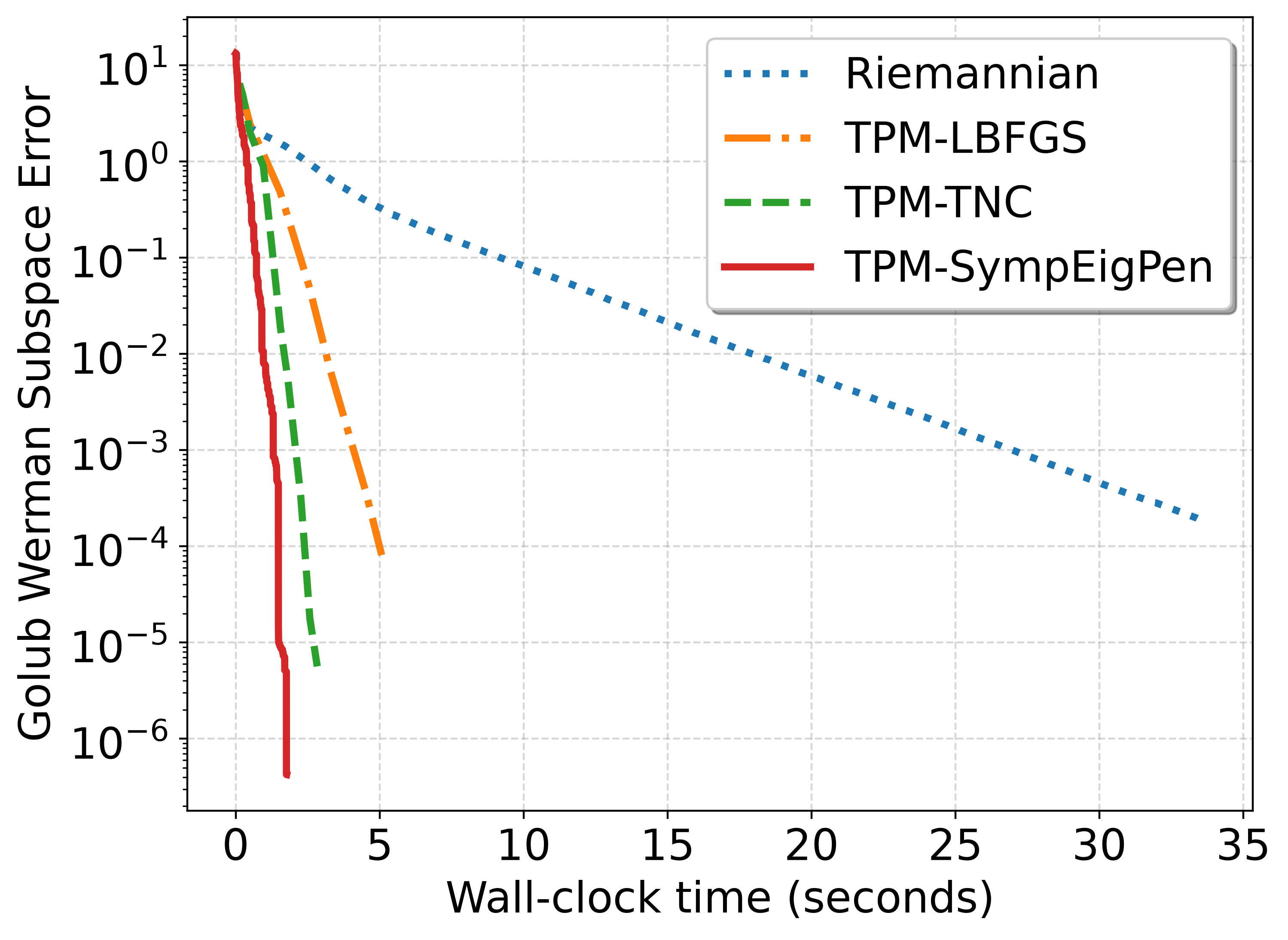}
    \caption{Sparse $A$, $400 \times 400$, $p=50$}
    \label{fig:sparse-sub22}
\end{subfigure}
\begin{subfigure}[b]{0.3\linewidth}
    \centering
    \includegraphics[width=\linewidth]{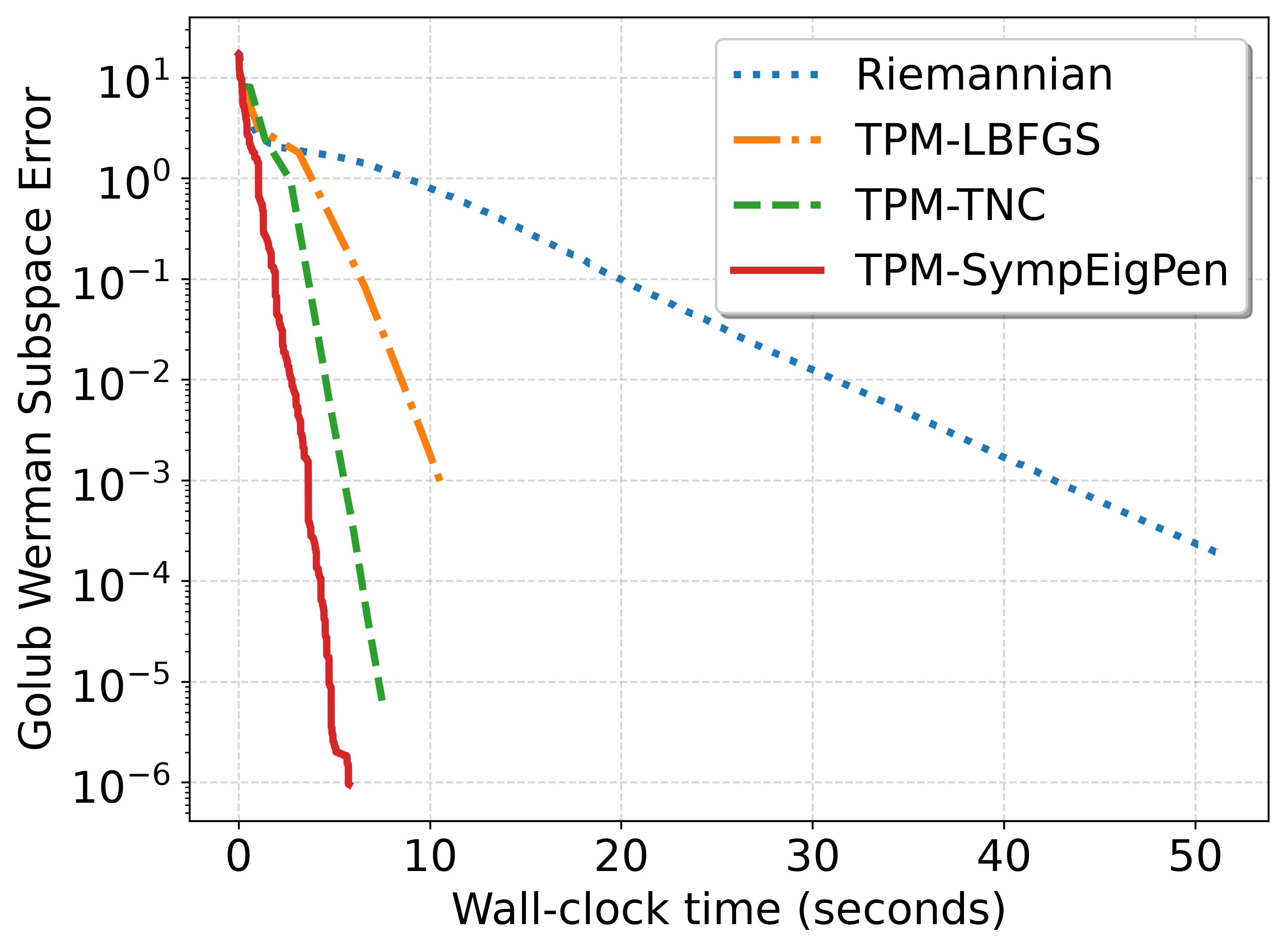}
    \caption{Sparse $A$, $400 \times 400$, $p=100$}
    \label{fig:sparse-sub23}
\end{subfigure}
\caption{Golub-Werman Subspace errors for sparse $A$ (sizes $200\times200$) and varying $p$.}
\label{fig:error-time-sparse}
\end{figure}

\begin{figure}[htbp!]
\centering
\begin{subfigure}[b]{0.3\linewidth}
    \centering
    \includegraphics[width=\linewidth]{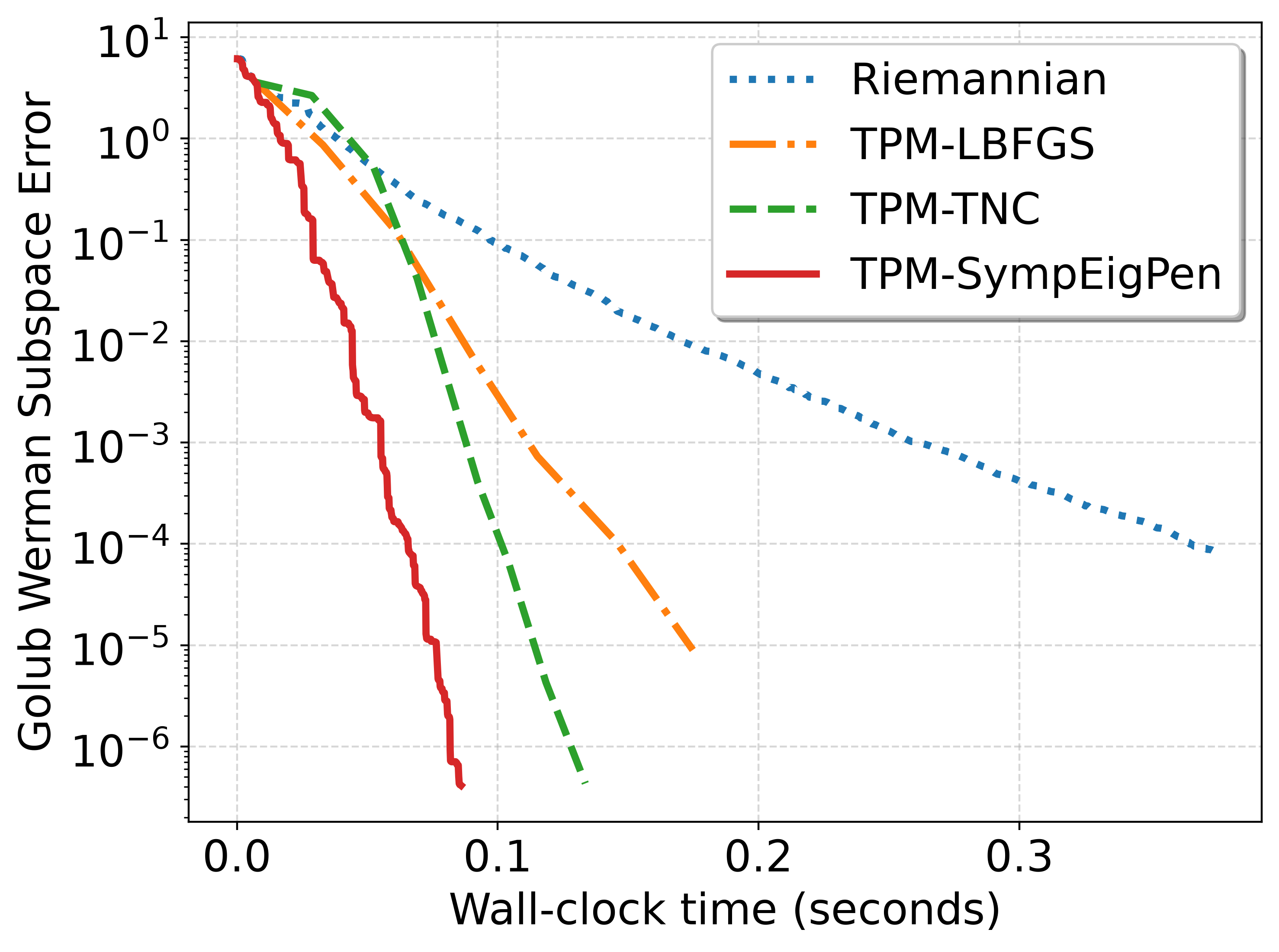}
    \caption{Sparse-add-low-rank $A$, $200 \times 200$, $p=10$}
    \label{fig:slr-sub11}
\end{subfigure}
\begin{subfigure}[b]{0.3\linewidth}
    \centering
    \includegraphics[width=\linewidth]{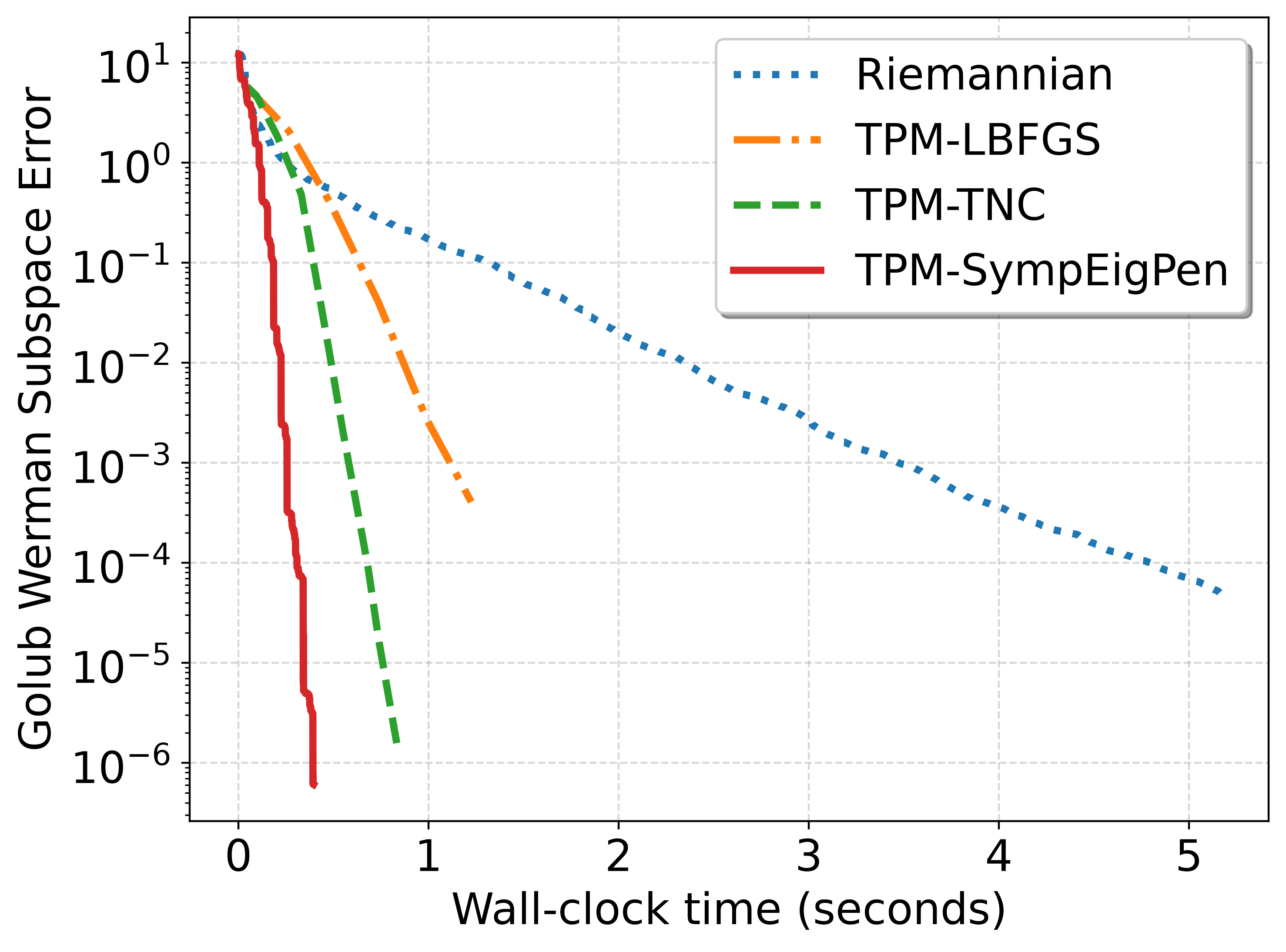}
    \caption{Sparse-add-low-rank $A$, $200 \times 200$, $p=50$}
    \label{fig:slr-sub12}
\end{subfigure}
\begin{subfigure}[b]{0.3\linewidth}
    \centering
    \includegraphics[width=\linewidth]{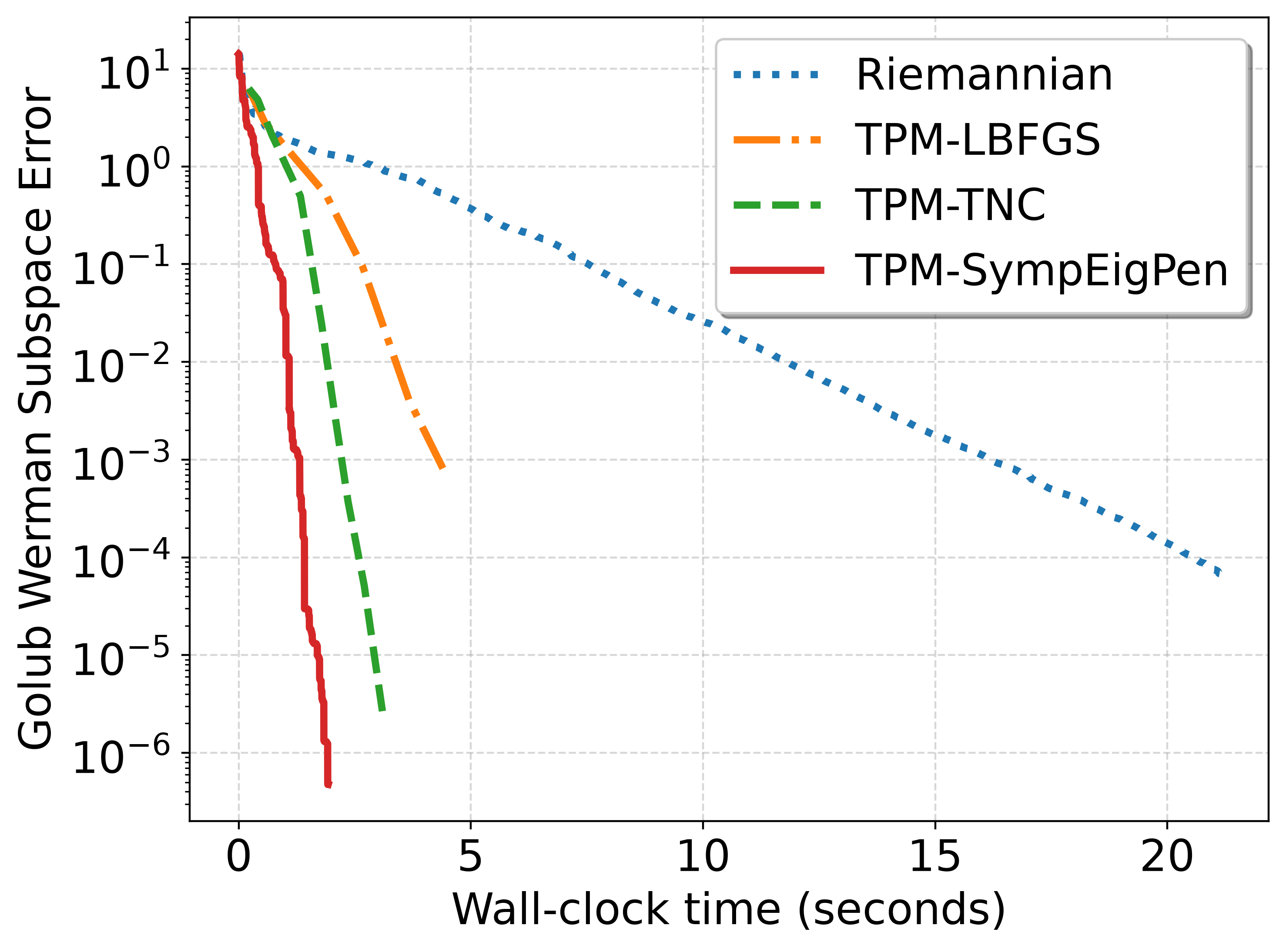}
    \caption{Sparse-add-low-rank $A$, $200 \times 200$, $p=100$}
    \label{fig:slr-sub13}
\end{subfigure}\\
\begin{subfigure}[b]{0.3\linewidth}
    \centering
    \includegraphics[width=\linewidth]{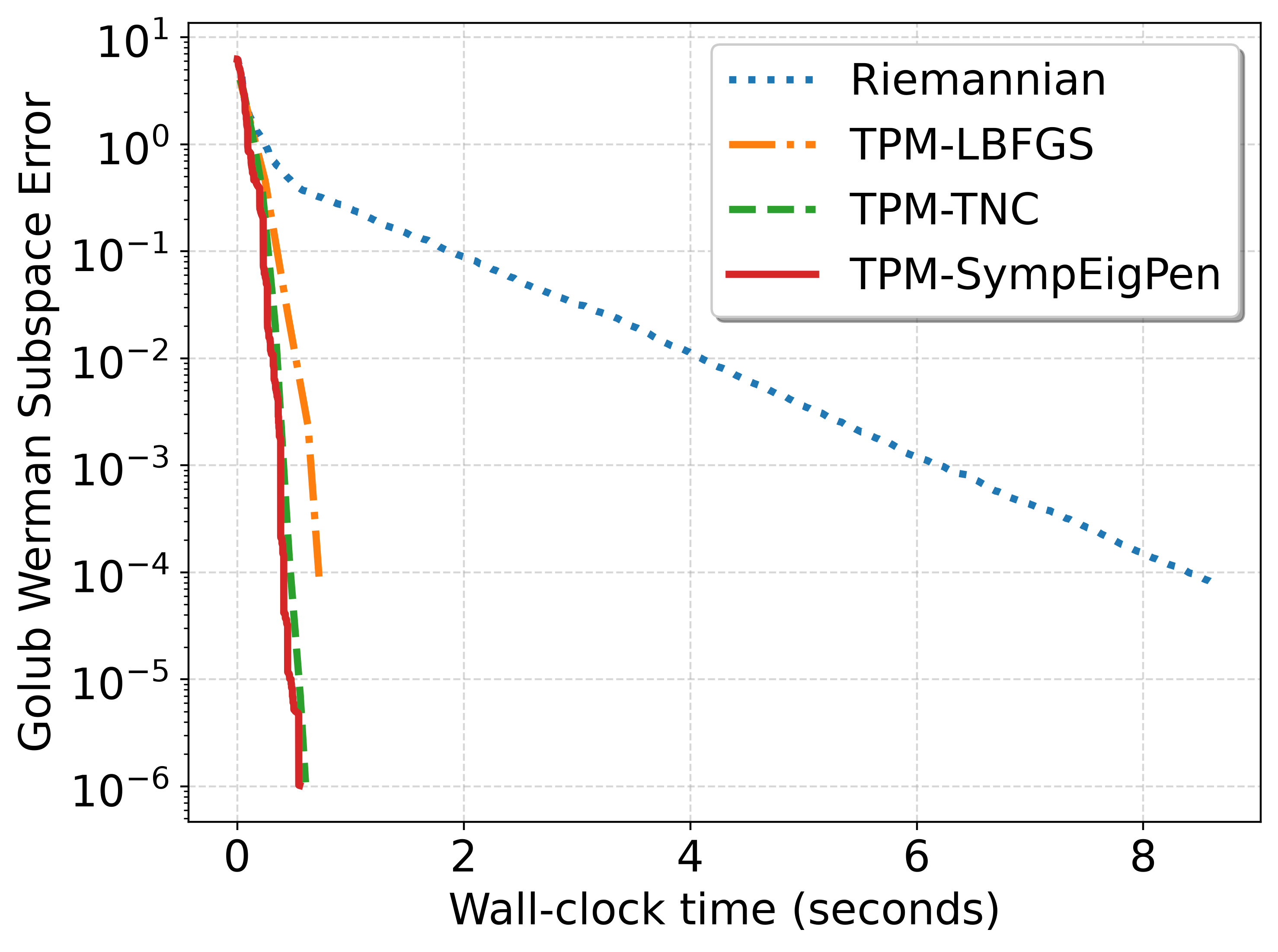}
    \caption{Sparse-add-low-rank $A$, $400 \times 400$, $p=10$}
    \label{fig:slr-sub21}
\end{subfigure}
\begin{subfigure}[b]{0.3\linewidth}
    \centering
    \includegraphics[width=\linewidth]{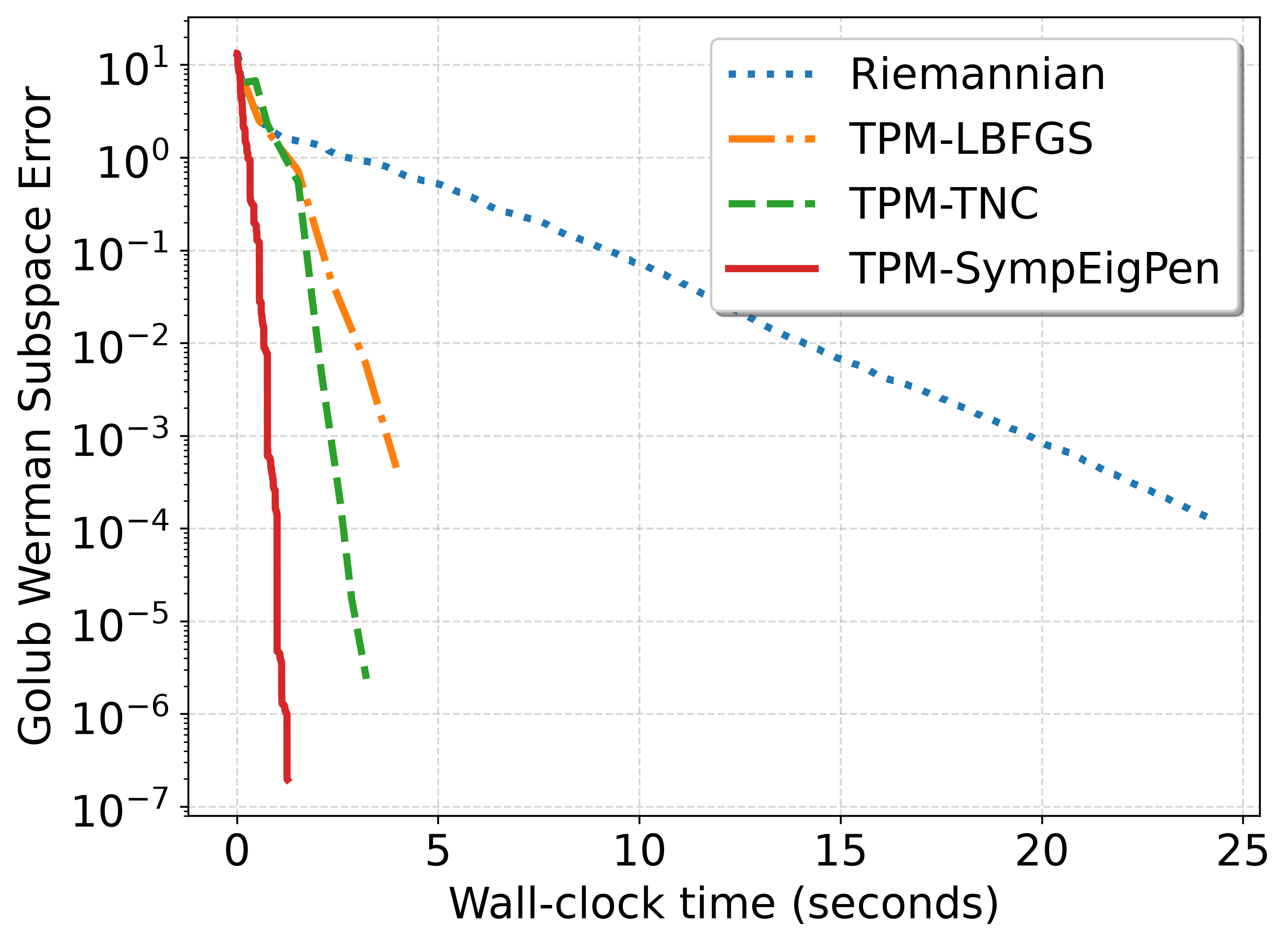}
    \caption{Sparse-add-low-rank $A$, $400 \times 400$, $p=50$}
    \label{fig:slr-sub22}
\end{subfigure}
\begin{subfigure}[b]{0.3\linewidth}
    \centering
    \includegraphics[width=\linewidth]{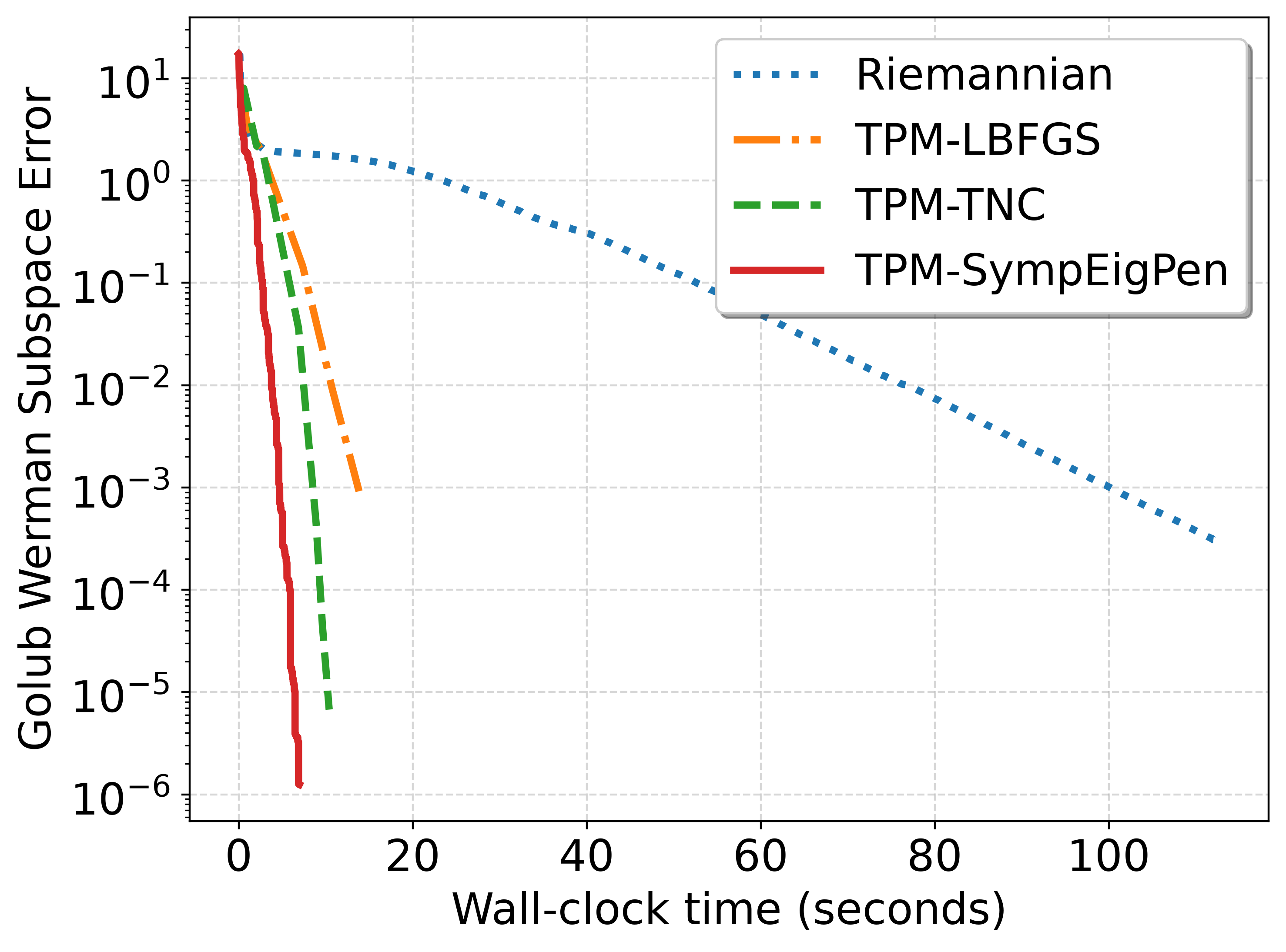}
    \caption{Sparse-add-low-rank $A$, $400 \times 400$, $p=100$}
    \label{fig:slr-sub23}
\end{subfigure}
\caption{Golub-Werman Subspace errors for Sparse-add-low-rank $A$ (sizes $200\times200$) and varying $p$.}
\label{fig:error-time-sparse-add-low-rank}
\end{figure}

Based on the comprehensive data in Table \ref{tab:time-err-res}, the numerical experiments clearly show that the \textit{TPM} framework is more efficient than the \texttt{eigs} function and the Riemannian method (RM). This holds true for different matrix types (dense, sparse, sparse-add-low-rank) and as the matrix height $n$ increases from $200$ to $51200$.

Among the tested optimizers within the \textit{TPM} framework, \textit{SympEigPen} (BB) is consistently the fastest. For the largest sparse matrix problem ($n=51200$), \textit{SympEigPen} only takes $132.90$ seconds, while the \texttt{eigs} method failed to finish within an hour. The advantage of \textit{TPM}-based methods becomes more significant as the problem size grows, which demonstrates the better scalability of \textit{TPM}-based algorithms for large-scale problems.

In terms of accuracy, all methods, including the different \textit{TPM} optimizers (\texttt{L-BFGS-B}, \texttt{TNC}, \textit{SympEigPen}), achieved low Golub-Werman subspace errors ($\mathcal{E}$) and residues ($\mathcal{R}$), typically ranging from $10^{-5}$ to $10^{-10}$. The \textit{SympEigPen} optimizer not only provided high computational speed but also maintained excellent solution quality, with errors and residues comparable to or even lower than the other approaches.

\begin{table}[htbp!]
\centering
\resizebox{\linewidth}{!}{
\begin{tabular}{|c|c|ccc|ccc|ccc|ccc|}
\hline
\multicolumn{14}{|c|}{Dense $A$}\\
\hline
$n$ & $t_{\text{eigs}}$& $t_{\text{RM}}$ & $\mathcal{E}_{\text{RM}}$ & $\mathcal{R}_{\text{RM}}$ & $t_{\text{LBFGS}}$ & $\mathcal{E}_{\text{LBFGS}}$ & $\mathcal{R}_{\text{LBFGS}}$ & $t_{\text{TNC}}$ & $\mathcal{E}_{\text{TNC}}$ & $\mathcal{R}_{\text{TNC}}$ & $t_{\text{BB}}$ & $\mathcal{E}_{\text{BB}}$ & $\mathcal{R}_{\text{BB}}$ \\
\hline
200&0.16s&0.11s&1.27e-5&5.23e-7&0.13s&4.70e-6&2.57e-7&0.09s&3.54e-8&6.78e-9&0.03s&1.65e-7&2.40e-9\\
400&0.22s&0.78s&1.85e-5&2.70e-7&0.62s&1.08e-5&3.66e-7&0.37s&1.79e-7&1.26e-8&0.11s&1.96e-7&1.64e-9\\
800&2.76s&1.88s&1.38e-5&3.22e-7&1.86s&1.16e-5&3.16e-7&1.21s&7.44e-8&8.61e-9&0.40s&1.43e-7&1.17e-9\\
1600&19.69s&25.62s&9.06e-5&5.48e-7&12.52s&1.71e-5&4.06e-7&8.45s&3.94e-7&1.07e-8&3.45s&2.92e-7&7.95e-10\\
3200&115.57s&73.09s&3.26e-5&5.18e-7&50.39s&1.31e-5&4.82e-7&32.18s&3.39e-7&1.71e-8&11.05s&4.46e-8&4.54e-10\\
6400&814.72s&642.00s&4.33e-5&3.43e-7&398.63s&3.93e-5&7.81e-7&209.37s&4.97e-7&2.88e-8&81.56s&7.86e-8&4.04e-10\\
12800&--&--&--&--&1439.96s&5.47e-5&8.07e-7&947.89s&2.25e-6&4.09e-8&469.16s&8.05e-8&2.45e-10\\
\hline
\multicolumn{14}{|c|}{Sparse $A$}\\
\hline
$n$ & $t_{\text{eigs}}$& $t_{\text{RM}}$ & $\mathcal{E}_{\text{RM}}$ & $\mathcal{R}_{\text{RM}}$ & $t_{\text{LBFGS}}$ & $\mathcal{E}_{\text{LBFGS}}$ & $\mathcal{R}_{\text{LBFGS}}$ & $t_{\text{TNC}}$ & $\mathcal{E}_{\text{TNC}}$ & $\mathcal{R}_{\text{TNC}}$ & $t_{\text{BB}}$ & $\mathcal{E}_{\text{BB}}$ & $\mathcal{R}_{\text{BB}}$ \\
\hline
200&0.41s&0.28s&7.86e-5&2.64e-6&0.15s&6.21e-6&2.75e-7&0.10s&1.15e-7&7.71e-9&0.03s&2.94e-7&3.58e-9\\
400&0.12s&1.36s&4.09e-5&2.25e-7&0.56s&1.09e-5&2.41e-7&0.31s&2.45e-7&7.64e-9&0.10s&8.73e-7&3.84e-9\\
800&0.63s&2.00s&1.64e-5&1.80e-7&1.27s&5.55e-6&2.70e-7&0.78s&5.80e-8&4.51e-9&0.19s&2.11e-7&2.00e-9\\
1600&3.50s&8.13s&3.57e-5&3.54e-7&4.14s&1.73e-5&2.87e-7&2.25s&1.64e-7&5.68e-9&0.71s&3.01e-7&1.56e-9\\
3200&73.09s&37.22s&5.66e-5&4.49e-7&13.74s&2.00e-5&4.09e-7&7.19s&2.47e-7&8.36e-9&2.08s&1.68e-7&6.57e-10\\
6400&341.29s&82.37s&6.27e-5&6.47e-7&40.59s&1.85e-5&3.93e-7&18.35s&4.79e-7&1.67e-8&6.38s&1.06e-7&6.83e-10\\
12800&536.81s&522.37s&1.82e-4&8.89e-7&124.75s&2.86e-5&5.13e-7&69.07s&2.08e-6&2.33e-8&28.65s&2.00e-7&4.57e-10\\
25600&1343.26s&2037.10s&2.28e-5&8.32e-7&325.21s&2.02e-5&8.68e-7&203.92s&3.96e-8&4.02e-9&62.28s&4.24e-7&5.11e-9\\
51200&--&--&--&--&1012.85s&6.60e-5&2.72e-7&592.31s&5.52e-8&2.20e-9&192.85s&5.16e-7&2.48e-9\\
\hline
\multicolumn{14}{|c|}{Sparse-add-low-rank $A$}\\
\hline
$n$ & $t_{\text{eigs}}$& $t_{\text{RM}}$ & $\mathcal{E}_{\text{RM}}$ & $\mathcal{R}_{\text{RM}}$ & $t_{\text{LBFGS}}$ & $\mathcal{E}_{\text{LBFGS}}$ & $\mathcal{R}_{\text{LBFGS}}$ & $t_{\text{TNC}}$ & $\mathcal{E}_{\text{TNC}}$ & $\mathcal{R}_{\text{TNC}}$ & $t_{\text{BB}}$ & $\mathcal{E}_{\text{BB}}$ & $\mathcal{R}_{\text{BB}}$ \\
\hline
200&0.23s&0.39s&8.45e-5&2.32e-6&0.22s&5.41e-6&2.30e-7&0.16s&1.07e-7&6.15e-9&0.07s&4.13e-7&3.59e-9\\
400&0.15s&5.26s&7.97e-5&2.58e-7&0.92s&1.91e-5&4.99e-7&0.56s&2.10e-7&3.28e-9&0.23s&1.02e-6&2.19e-9\\
800&4.83s&4.53s&2.08e-5&3.77e-7&2.12s&6.72e-6&2.80e-7&1.24s&2.60e-7&9.34e-9&0.45s&1.48e-7&1.17e-9\\
1600&6.72s&13.11s&3.34e-5&5.10e-7&6.50s&9.78e-6&3.77e-7&3.52s&1.69e-7&6.26e-9&1.12s&6.91e-8&9.24e-10\\
3200&167.59s&64.98s&7.27e-5&6.24e-7&19.07s&4.29e-5&7.14e-7&11.36s&3.12e-7&1.08e-8&3.66s&1.36e-7&6.86e-10\\
6400&226.09s&162.28s&4.66e-5&5.43e-7&57.64s&1.92e-5&3.82e-7&28.21s&3.34e-7&1.95e-8&10.14s&7.65e-8&4.67e-10\\
12800&592.62s&594.24s&2.37e-5&6.42e-7&165.22s&2.28e-5&4.23e-7&78.76s&5.68e-7&2.95e-8&32.96s&4.80e-8&8.21e-10\\
25600&1692.12s&2261.22s&4.36e-5&5.17e-7&382.11s&1.79e-5&6.26e-7&263.14s&2.29e-8&3.13e-9&76.42s&2.20e-7&3.45e-9\\
51200&--&--&--&--&1321.22s&3.23e-5&1.58e-7&703.26s&4.23e-8&4.32e-9&221.29s&4.62e-7&3.16e-9\\
\hline
\end{tabular}
}
\caption{Wall-clock time, Golub-Werman subspace errors and residues for three types of $A$ (varying $n$, $p=10$).}
\label{tab:time-err-res}
\end{table}

\section{Conclusion}
\label{sec:con}
In this paper, we consider the symplectic eigenvalue problem for symmetric positive-definite matrices through its trace minimization formulation \eqref{Prob_Ori}, where the constraints lead to significant challenges to the development of efficient algorithms. In our proposed approach, we reformulate the trace minimization problem \eqref{Prob_Ori} into an unconstrained one \eqref{Prob_Pen}. We demonstrate that \eqref{Prob_Pen} is equivalent to the original problem \eqref{Prob_Ori} with a sufficiently large but finite $\beta$, in the sense that any second-order stationary point of \eqref{Prob_Pen} corresponds to a global minimizer of \eqref{Prob_Pen}. To solve \eqref{Prob_Pen} efficiently, we develop SympEigPen, a non-monotonic gradient method with a Barzilai-Borwein step size. Preliminary experiments demonstrate that SympEigPen outperforms existing methods in convergence rate and efficiency.

Our proposed penalty function \eqref{Prob_Pen} opens several promising avenues for future research. The established equivalence between \eqref{Prob_Ori} and \eqref{Prob_Pen} provides a foundation for developing efficient methods for symplectic eigenvalue problems in stochastic or decentralized settings. Under these settings, the data matrix $A$ is either formulated as an expectation or partitioned among agents in a network with local communication. Since geometric computations on the symplectic Stiefel manifold are computationally expensive, existing Riemannian optimization approaches cannot efficiently solve symplectic eigenvalue problems in these settings. Consequently, applying the quadratic penalty \eqref{Prob_Pen} to stochastic or decentralized settings remains a valuable direction for future work.

\section*{Declarations}

\subsection*{Competing Interests}
The authors have no competing interests to declare that are relevant to the content of this article.

\subsection*{Funding}
No funding was received to assist with the preparation of this manuscript.

\subsection*{Code Availability}
The code used to generate the numerical results is available at \url{https://github.com/wangjiaqi-amss/sympeigpen.git}.

\section*{Acknowledgements}
The authors sincerely thank Prof. Hemant K. Mishra for his valuable suggestions and comments.

\bibliographystyle{plain}
\bibliography{ref}

\end{document}